
\documentclass[10pt, a4paper]{article}%
\usepackage{amsmath,amssymb,eucal}
\usepackage{amsfonts}
\usepackage{mathrsfs}
\usepackage{slashed}
\usepackage{graphicx}
\usepackage{ifpdf}
\usepackage{url}
\numberwithin{equation}{section} \setlength{\textwidth} {15cm}
\setlength{\oddsidemargin}      {0.3cm}
\setlength{\evensidemargin} {0.6cm} \setlength{\textheight} {21cm}
\setlength{\topmargin}          {-0.6cm} \setlength{\parskip}
{\medskipamount} \setlength{\arraycolsep}        {2.5pt}
\setlength{\unitlength}         {0.75cm}
\def\beq{\begin{equation}}
\def\eeq{\end{equation}}

\def\bC{ {{\mathbb{C}}}}
\def\bR{ {{\mathbb{R}}}}

\def\Tr{ {{\rm{Tr}}} }

\newcommand{\pk}[1]{p_{\kappa}}
\newcommand{\ba}{\mathbf{a}}

\newcommand{\ol}{\overline{l}}
\newcommand{\ul}{\underline{l}}
\newcommand{\oL}{\overline{L}}
\newcommand{\uL}{\underline{L}}
\newcommand{\on}{\overline{n}}
\newcommand{\un}{\underline{n}}

\newcommand{\bk}{\breve{\kappa}}

\newtheorem{defn}{{\bf Definition}}[section]

\newtheorem{cor}[defn]{{\bf Corollary}}
\newtheorem{lem}[defn]{{\bf Lemma}}
\newtheorem{prop}[defn]{{\bf Proposition}}
\newtheorem{rem}[defn]{{\bf Remark}}
\newtheorem{example}[defn]{Example}

\newtheorem{notation}[defn]{Notation}
\newenvironment{proof}[1][Proof]{\textbf{#1.} }{\hfill \rule{0.5em}{0.5em}}
\begin{document}

\title{Einstein-Hilbert Path Integrals in $\bR^4$}
\author{Adrian P. C. Lim \\
Email: ppcube@gmail.com
}

\date{}

\maketitle

\begin{abstract}
A hyperlink is a finite set of non-intersecting simple closed curves in $\mathbb{R} \times \mathbb{R}^3$. The dynamical variables in General Relativity are the vierbein $e$ and a $\mathfrak{su}(2)\times\mathfrak{su}(2)$-valued connection $\omega$. Together with Minkowski metric, $e$ will define a metric $g$ on the manifold.

The Einstein-Hilbert action $S(e,\omega)$ is defined using $e$ and $\omega$. We will define a path integral $I$ by integrating a functional $H(e,\omega)$ against a holonomy operator of a hyperlink $L$, and the exponential of the Einstein-Hilbert action, over the space of vierbeins $e$ and $\mathfrak{su}(2)\times\mathfrak{su}(2)$-valued connections $\omega$.

Three different types of functional will be considered for $H$, namely area of a surface, volume of a region and the curvature of a surface $S$. Using our earlier work done on Chern-Simons path integrals in $\mathbb{R}^3$, we will derive and write these infinite dimensional path integrals $I$ as the limit of a sequence of Chern-Simons integrals.
\end{abstract}

\hspace{.35cm}{\small {\bf MSC} 2010: } 83C45, 81S40, 81T45, 57R56 \\
\indent \hspace{.35cm}{\small {\bf Keywords}: Area, Volume, Curvature, Path integral, Einstein-Hilbert, Quantum gravity}




\section{Einstein-Hilbert Action}\label{s.pre}

Consider a 3-manifold $M$, hence a 4-manifold $\bR \times M$, and a principal bundle $V$ over $\bR \times M$, with structure group $G$. Let $\mathfrak{g}$ be the Lie Algebra of $G$. The vector space of all smooth $\mathfrak{g}$-valued one forms on the manifold $\bR \times M$ will be denoted by $\mathcal{A}_{\bR \times M, \mathfrak{g}}$. Denote the group of all smooth $G$-valued mappings on $\bR \times M$ by $\mathcal{G}$, called the gauge group. The gauge group induces a gauge transformation on $\mathcal{A}_{\bR \times M, \mathfrak{g}}$,  $\mathcal{A}_{\bR \times M, \mathfrak{g}} \times \mathcal{G} \rightarrow \mathcal{A}_{\bR \times M, \mathfrak{g}}$, given by \beq A \cdot \Omega \equiv A^{\Omega} := \Omega^{-1}d\Omega + \Omega^{-1}A\Omega \nonumber \eeq for $A \in \mathcal{A}_{\bR \times M, \mathfrak{g}}$, $\Omega \in \mathcal{G}$. The orbit of an element $A \in \mathcal{A}_{\bR \times M, \mathfrak{g}}$ under this operation will be denoted by $[A]$ and the set of all orbits by $\mathcal{A}/\mathcal{G}$.

The 4-manifold we will consider in this article is $\bR \times \bR^3 \equiv \bR^4$, with tangent bundle $T\bR^4$. The tangent-space indices are denoted by $a, b, c, d$ and `Lorentz' indices as $\mu, \gamma, \alpha, \beta$, both taking values from $\{0, 1, 2, 3\}$.

Let $V \rightarrow \bR \times \bR^3$ be a 4-dimensional vector bundle, with structure group $SO(3,1)$. This implies that $V$ is endowed with a metric, $\eta^{ab}$, of signature $(-, +, +, +)$, and a volume form $\epsilon_{abcd}$.

\begin{notation}
Fix the standard coordinates on $\bR^4\equiv \bR \times \bR^3 $, with time coordinate $x_0$ and spatial coordinates $(x_1, x_2, x_3)$.

Let $\Lambda^q(\bR^4)$ denote the q-th exterior power of $\bR^4$ and we choose the canonical basis \newline
$\{dx_0, dx_1, dx_2, dx_3\}$ for $\Lambda^1(\bR^4)$. Using the standard coordinates on $\bR^4$, let $\Lambda^1(\bR^3)$ denote the subspace in $\Lambda^1(\bR^4)$ spanned by $\{dx_1, dx_2, dx_3\}$. Finally, a basis for $\Lambda^2(\bR^4)$ is given by \beq \{dx_0 \wedge dx_1, dx_0 \wedge dx_2, dx_0 \wedge dx_3, dx_2\wedge dx_3, dx_3 \wedge dx_1, dx_1 \wedge dx_2\}. \nonumber \eeq

We adopt Einstein's summation convention, i.e. we sum over repeated superscripts and subscripts.
\end{notation}

Suppose $V$ has the same topological type as $T\bR^4$, so that isomorphisms between $V$ and $T\bR^4$ exist. Hence we may assume that $V$ is a trivial bundle over $\bR^4$. Without loss of generality, we will assume the Minkowski metric $\eta^{ab}$ is given by \beq \eta = -dx_0 \otimes dx_0 + \sum_{i=1}^3 dx_i \otimes dx_i. \nonumber \eeq And $\epsilon^{\mu \gamma \alpha \beta} \equiv \epsilon_{\mu \gamma \alpha \beta}$ is equal to 1 if the number of transpositions required to permute $(0123)$ to $(\mu\gamma\alpha\beta)$ is even; otherwise it takes the value -1.

However, there is no natural choice of an isomorphism. A vierbein $e$ is a choice of isomorphism between $T\bR^4$ and $V$. It may be regarded as a $V$-valued one form, obeying a certain condition of invertibility. A spin connection $\omega_{\mu \gamma}^a$ on $V$, is anti-symmetric in its indices $\mu$, $\gamma$. It takes values in $\Lambda^2 (V)$, whereby $\Lambda^k (V)$ denotes the $k$-th antisymmetric tensor power or exterior power of $V$. The isomorphism $e$ and the connection $w$ can be regarded as the dynamical variables of General Relativity.

The curvature tensor is defined as
\beq R_{\mu\gamma}^{ab} = \partial_a \omega_{\mu \gamma}^{b} - \partial_b \omega_{\mu \gamma}^{a} + [\omega^a, \omega^b]_{\mu\gamma},\ \partial_a \equiv \partial/\partial x_a, \nonumber \eeq
or as $R = d\omega + \omega \wedge \omega$. It can be regarded as a two form with values in $\Lambda^2 (V)$.

Using the above notations, the Einstein-Hilbert action is written as \beq S_{EH}(e, \omega) = \frac{1}{8}\int_{\bR^4}\epsilon^{\mu \gamma \alpha \beta}\epsilon_{abcd}\ e_\mu^{a}e_\gamma^b R_{\alpha\beta}^{cd}. \label{e.eh.2} \eeq The expression $e \wedge e \wedge R$ is a four form on $\bR^4$ taking values in $V \otimes V \otimes \Lambda^2(V)$ which maps to $\Lambda^4(V)$. But $V$ with the structure group $SO(3,1)$ has a natural volume form, so a section of $\Lambda^4(V)$ may be canonically regarded as a function. Thus Equation (\ref{e.eh.2}) is an invariantly defined integral.
By varying Equation (\ref{e.eh.2}) with respect to $e$, we will obtain the Einstein equations in vacuum. See \cite{Witten:1988hc}.

The metric $\eta^{ab}$ on $V$, together with the isomorphism $e$ between $T\bR^4$ and $V$, gives a (non-degenerate) metric $g^{ab} = e_\mu^a e_\gamma^b \eta^{\mu\gamma}$ on $T\bR^4$. By varying Equation (\ref{e.eh.2}) with respect to the connection $\omega$, we will obtain an equation that identifies $\omega$ as the Levi-Civita connection associated with the metric $g^{ab}$.

\section{Notations}

Throughout this article, $\sqrt{-1}$ will be denoted by $i$.

\begin{notation}\label{n.s.1}(Subspaces in $\bR^4$)\\
In this article, we will write $\bR^4 \equiv \bR \times \bR^3$, whereby $\bR$ will be referred to as the time-axis and $\bR^3$ is the spatial 3-dimensional Euclidean space. In future, when we write $\bR^3$, we refer to the spatial subspace in $\bR^4$. Let $\pi_0: \bR^4 \rightarrow \bR^3$ denote this projection.

Let $\{e_i\}_{i=1}^3$ be the standard basis in $\bR^3$. And $\Sigma_i$ is the plane in $\bR^3$, containing the origin, whose normal is given by $e_i$. So, $\Sigma_1$ is the $x_2-x_3$ plane, $\Sigma_2$ is the $x_3-x_1$ plane and finally $\Sigma_3$ is the $x_1-x_2$ plane.

Note that $\bR \times \Sigma_i \cong \bR^3$ is a 3-dimensional subspace in $\bR^4$. Here, we replace one of the axis in the spatial 3-dimensional Euclidean space with the time-axis. Let $\pi_i: \bR^4 \rightarrow \bR \times \Sigma_i$ denote this projection.
\end{notation}

\begin{notation}(Indices)\label{n.n.5}\\
In this article, the symbols are indexed by several indices. To make it easier for the reader to follow, we will reserve certain symbols for specific indices.

In the rest of the article, indices labeled $i, j, k$, $\bar{i}, \bar{j}, \bar{k}$  will only take values from 1 to 3. These indices will keep track of the spatial coordinate $x_i$.

Indices such as $a,b,c, d$ and greek indices such as $\mu,\gamma, \alpha, \beta$ will take values from 0 to 3. We will use the greek indices to index the basis in $\Lambda^2(V)$.

We will let $I = [0,1]$ be the unit interval, and \beq I^2 \equiv I \times I,\ \ I^3 = I \times I \times I,\ \ I^4 = I \times I \times I \times I. \nonumber \eeq We will let $s, \bar{s}, t, \bar{t}$ denote real numbers in $I$ and $\hat{s} = (s,\bar{s})$, $\hat{t} = (t,\bar{t})$. And $d\hat{s} \equiv ds d\bar{s}$, $d\hat{t} \equiv dt d\bar{t}$. Typically, $s, \bar{s}, t, \bar{t}$ will be reserved as the variable for some parametrization, i.e. $\vec{\rho}: s \in I \mapsto \bR^4$.

\end{notation}

\begin{notation}\label{n.s.2}(Symmetric group $S_3$)\\
Let $S_3$ denote the symmetric group on the set $\{1,2,3\}$. In this group, there is a cyclic subgroup, $C_3$ given by the set $C_3 = \{(1,2,3), (2,3,1), (3,1,2)\}$.
And $\Upsilon$ denote the set $\{(2,3), (3,1), (1,2)\}$.

Let $\tau: \{1,2,3\} \rightarrow \Upsilon$, by \beq \tau: 1 \mapsto (2,3),\ \ 2 \mapsto (3,1),\ \ 3 \mapsto (1,2). \nonumber \eeq

Let $\epsilon^{ijk} \equiv \epsilon_{ijk}$ be defined on the set $\{1,2,3\}$, by \beq \epsilon^{123} = \epsilon^{231} = \epsilon^{312} = 1,\ \ \epsilon^{213} = \epsilon^{321} = \epsilon^{132} = -1, \nonumber \eeq if $i,j,k$ are all distinct; 0 otherwise.
\end{notation}

\begin{notation}\label{n.v.1}(Vectors in $\bR^4$)\\
More often than not, given a symbol $p$, we will use $\vec{p} \equiv (p_0, p_1, p_2, p_3)$ to denote a 4-vector, $p \equiv (p_1, p_2, p_3)$ to denote a 3-vector and $\hat{p}$ to denote a 2-vector.

Suppose we have a vector $\sigma \equiv (\sigma_1, \sigma_2, \sigma_3) \in \bR^3$. We will write $\vec{\sigma} := (0, \sigma) \equiv (0, \sigma_1, \sigma_2, \sigma_3)$.

Write $x = (x_1, x_2, x_3)$. For $i= 1, 2, 3$, we will write \beq \hat{x}_i =
\left\{
  \begin{array}{ll}
    (x_2, x_3), & \hbox{$i=1$;} \\
    (x_1, x_3), & \hbox{$i=2$;} \\
    (x_1, x_2), & \hbox{$i=3$.}
  \end{array}
\right. \nonumber \eeq
\end{notation}

\begin{notation}(Representation of $\mathfrak{su}(2) \times \mathfrak{su}(2)$ )\label{n.su.1}\\
Let $V$ be a vector space of dimension 4. In the rest of this article, we take our principal bundle over $\bR^4$ to be trivial, i.e. $\bR^4 \times V \rightarrow \bR^4$ will be our trivial bundle in consideration. Fix a basis $\{E^\gamma\}$ in $V$. Write $E^{\gamma \mu}= E^\gamma \wedge E^\mu \in \Lambda^2(V)$, thus $\{E^{\gamma \mu}\}_{0\leq \gamma<\mu\leq 3}$ is a basis for $\Lambda^2(V)$.


Let $\mathfrak{su}(2)$ be the Lie Algebra of $SU(2)$. We can map $\Lambda^2(V)$ to the Lie Algebra $\mathfrak{su}(2) \times \mathfrak{su}(2)$ via a linear map. Let $\{\breve{e}_1, \breve{e}_2, \breve{e}_3\}$ be any basis for the first copy of $\mathfrak{su}(2)$ and
$\{\hat{e}_1, \hat{e}_2, \hat{e}_3\}$ be any basis for the second copy of $\mathfrak{su}(2)$, satisfying the conditions
\begin{align*}
[\breve{e}_1, \breve{e}_2] =& \breve{e}_3,\ \ [\breve{e}_2, \breve{e}_3] = \breve{e}_1,\ \ [\breve{e}_3, \breve{e}_1] = \breve{e}_2, \\
[\hat{e}_1, \hat{e}_2] =& \hat{e}_3,\ \ [\hat{e}_2, \hat{e}_3] = \hat{e}_1,\ \ [\hat{e}_3, \hat{e}_1] = \hat{e}_2.
\end{align*}

Let
\beq E^{01} \mapsto (\breve{e}_1,0) \equiv \hat{E}^{01},\ E^{02} \mapsto (\breve{e}_2,0) \equiv \hat{E}^{02},\ E^{03} \mapsto (\breve{e}_3,0) \equiv \hat{E}^{03} \nonumber \eeq and \beq E^{23} \mapsto (0,\hat{e}_1) \equiv \hat{E}^{23},\ E^{31} \mapsto (0,\hat{e}_2) \equiv \hat{E}^{31},\ E^{12} \mapsto (0,\hat{e}_3) \equiv \hat{E}^{12}. \nonumber \eeq Do note that $\hat{E}^{\alpha\beta} = -\hat{E}^{\beta\alpha} \in \mathfrak{su}(2) \times \mathfrak{su}(2)$. Refer to Notation \ref{n.s.2}. Now $\hat{E}^{\tau(1)} = \hat{E}^{23}$, $\hat{E}^{\tau(2)} = \hat{E}^{31}$ and $\hat{E}^{\tau(3)} = \hat{E}^{12}$.

This isomorphism
that sends $E^{\alpha\beta} \mapsto \hat{E}^{\alpha\beta}$ will be fixed throughout this article. Using the above basis, define \beq \mathcal{E}^+ = \sum_{i=1}^3\breve{e}_i\ ,\ \mathcal{E}^- = \sum_{i=1}^3\hat{e}_i, \nonumber \eeq and a $4 \times 4$ complex matrix
\beq \mathcal{E} =
\left(
  \begin{array}{cc}
    -\mathcal{E}^+ &\ 0 \\
    0 &\ \mathcal{E}^- \\
  \end{array}
\right). \nonumber \eeq

Write \beq \mathcal{F}^+ = \sum_{j=1}^3 \hat{E}^{0j} \equiv (\mathcal{E}^+, 0), \ {\rm and}\ \mathcal{F}^- = \sum_{j=1}^3 \hat{E}^{\tau(j)}\equiv (0, \mathcal{E}^-). \nonumber \eeq

For $A, B, C, D \in \mathfrak{su}(2)$, we define the Lie bracket on $\mathfrak{su}(2) \times \mathfrak{su}(2)$ as \beq [(A,B), (C,D)] = ([A,C], [B, D]) \in \mathfrak{su}(2) \times \mathfrak{su}(2). \nonumber \eeq

Let $\rho^\pm: \mathfrak{su}(2) \rightarrow {\rm End}(V^\pm)$ be an irreducible finite dimensional representation, indexed by half-integer and integer values $j_{\rho^\pm} \geq 0$. The representation $\rho: \mathfrak{su}(2) \times \mathfrak{su}(2) \rightarrow {\rm End}(V^+) \times {\rm End}(V^-)$ will be given by $\rho = (\rho^+, \rho^-)$, with \beq \rho: \alpha_i\hat{E}^{0i} + \beta_j \hat{E}^{\tau(j)} \mapsto \left(\sum_{i=1}^3\alpha_i \rho^+(\breve{e}_i) , \sum_{j=1}^3\beta_j \rho^-(\hat{e}_j) \right). \nonumber \eeq By abuse of notation, we will now write $\rho^+ \equiv (\rho^+, 0)$ and $\rho^- \equiv (0, \rho^-)$ in future and thus $\rho^+(\hat{E}^{0i}) \equiv \rho^+(\breve{e}_i)$, $\rho^-(\hat{E}^{\tau(j)}) \equiv \rho^-(\hat{e}_j)$. Also write \beq \rho^+(\mathcal{F}^+) = \sum_{j=1}^3 \rho^+(\hat{E}^{0j}), \ {\rm and}\ \rho^-(\mathcal{F}^-) = \sum_{j=1}^3 \rho^-(\hat{E}^{\tau(j)}). \nonumber \eeq

Note that the dimension of $V^\pm$ is given by $2j_{\rho^\pm} + 1$. Then it is known that the Casimir operator is
\begin{align*}
\sum_{i=1}^3 \rho^+(\hat{E}^{0i})\rho^+(\hat{E}^{0i}) \equiv \sum_{i=1}^3 \rho^+(\breve{e}_i) \rho^+(\breve{e}_i) = -\xi_{\rho^+} I_{\rho^+},\\
\sum_{i=1}^3 \rho^-(\hat{E}^{\tau(i)})\rho^-(\hat{E}^{\tau(i)}) \equiv \sum_{i=1}^3 \rho^-(\hat{e}_i) \rho^-(\hat{e}_i)= -\xi_{\rho^-} I_{\rho^-},
\end{align*}
$I_{\rho^\pm}$ is the $2j_{\rho^\pm} + 1$ identity operator for $V^\pm$ and $\xi_{\rho^\pm} := j_{\rho^\pm}(j_{\rho^\pm}+1)$.

Without loss of generality, we assume that $\rho^\pm(\hat{E})$ is skew-Hermitian for any $\hat{E} \in \mathfrak{su}(2)$, so by choosing a suitable basis in $V^+$, we will always assume that $\rho^+(i\mathcal{E}^+)$ is diagonal, with the real eigenvalues given by the set
\beq \left\{
       \begin{array}{ll}
         \{\pm \breve{\lambda}_1, \pm \breve{\lambda}_2, \cdots, \pm \breve{\lambda}_{(2j_{\rho^+}+1)/2}\}, & \hbox{$2j_{\rho^+} + 1$ is even;} \\
         \{\pm \breve{\lambda}_1, \pm \breve{\lambda}_2, \cdots, \pm \breve{\lambda}_{j_{\rho^+}}, 0\}, & \hbox{$2j_{\rho^+} + 1$ is odd.}
       \end{array}
     \right. \nonumber \eeq Similarly, by choosing another suitable basis in $V^-$, we will always assume that $\rho^-(i\mathcal{E}^-)$ is diagonal, with the real eigenvalues given by the set
\beq \left\{
       \begin{array}{ll}
         \{\pm \hat{\lambda}_1, \pm \hat{\lambda}_2, \cdots, \pm \hat{\lambda}_{(2j_{\rho^-}+1)/2}\}, & \hbox{$2j_{\rho^-} + 1$ is even;} \\
         \{\pm \hat{\lambda}_1, \pm \hat{\lambda}_2, \cdots, \pm \hat{\lambda}_{j_{\rho^-}}, 0\}, & \hbox{$2j_{\rho^-} + 1$ is odd.}
       \end{array}
     \right. \nonumber \eeq

Then, we have \beq \Tr\ \rho^+(e^{i\mathcal{E}^{+}}) =
\left\{
  \begin{array}{ll}
    \sum_{v=1}^{(2j_{\rho^+}+1)/2}2\cosh(\breve{\lambda}_v), & \hbox{$2j_{\rho^+} + 1$ is even;} \\
    1 + \sum_{v=1}^{j_{\rho^+}}2\cosh(\breve{\lambda}_v), & \hbox{$2j_{\rho^+} + 1$ is odd,}
  \end{array}
\right. \nonumber \eeq and
\beq \Tr\ \rho^-(e^{i\mathcal{E}^{-}}) =
\left\{
  \begin{array}{ll}
    \sum_{v=1}^{(2j_{\rho^-}+1)/2}2\cosh(\hat{\lambda}_v), & \hbox{$2j_{\rho^-} + 1$ is even;} \\
    1 + \sum_{v=1}^{j_{\rho^-}}2\cosh(\hat{\lambda}_v), & \hbox{$2j_{\rho^-} + 1$ is odd.}
  \end{array}
\right. \nonumber \eeq

In either case, we have $\Tr\ \rho^{\pm}(e^{i\mathcal{E}^{\pm}}) \geq 1$ and hence $\log\ \Tr\ \rho^{\pm}(e^{i\mathcal{E}^{\pm}}) \geq 0$, for any irreducible representation. Finally, note that $\Tr\ \rho^{\pm}(e^{i\mathcal{E}^{\pm}})$ is well-defined, even though $\mathcal{E}^\pm$ is not in general.
\end{notation}

\begin{rem}
By choosing the group $SU(2) \times SU(2)$, we actually define a spin structure on $\bR^4$.
\end{rem}

\begin{notation}\label{n.h.1}(On hyperlinks in $\bR^4$)\\
For a finite set of non-intersecting simple closed curves in $\bR^3$ or in $\bR \times \Sigma_i$, we will refer to it as a link. If it has only one component, then this link will be referred to as a knot. A simple closed curve in $\bR^4$ will be referred to as a loop. A finite set of non-intersecting loops in $\bR^4$ will be referred to as a hyperlink in this article. We say a link or hyperlink is oriented if we assign an orientation to its components.

Let $L$ be a hyperlink. We say $L$ is a time-like hyperlink, if given any 2 distinct points $p\equiv (x_0, x_1, x_2, x_3), q\equiv (y_0, y_1, y_2, y_3) \in L$, $p \neq q$, we have
\begin{itemize}
  \item $\sum_{i=1}^3(x_i - y_i)^2 > 0$;
  \item if there exists $i, j$, $i \neq j$ such that $x_i = y_i$ and $x_j = y_j$, then $x_0 - y_0 \neq 0$.
\end{itemize}
Throughout this article, all our hyperlinks in consideration will be time-like. We refer the reader to \cite{EH-Lim06} as to why the term time-like was used.

We will have 2 different hyperlinks, $\oL = \{\ol^u:\ u=1, \ldots, \on\}$ and $\uL = \{\ul^v:\ v=1, \ldots, \un\}$. The former will be called a matter hyperlink; the latter will be referred to as a geometric hyperlink. The symbols $u, \bar{u}, v, \bar{v}$ will be indices, taking values in $\mathbb{N}$. They will keep track of the loops in our hyperlinks $\oL$ and $\uL$. The symbols $\on$ and $\un$ will always refer to the number of components in $\oL$ and $\uL$ respectively.

Given a hyperlink $\oL$ and a hyperlink $\uL$, we also assume that together (by using ambient isotopy if necessary), they form another hyperlink with $\on + \un$ components. Denote this new hyperlink by $\chi(\oL, \uL) \equiv \chi(\{\ol^u\}_{u=1}^{\on}, \{\ul^v\}_{v=1}^{\un})$.

Color the matter hyperlink $\oL$, which means we choose a representation $\rho_u: \mathfrak{su}(2) \times \mathfrak{su}(2) \rightarrow {\rm End}(V_u^+) \times {\rm End}(V_u^-)$ for each component $\ol^u$, $u=1, \ldots, \on$, in the hyperlink $\oL$. Note that we do not color $\uL$, i.e. we do not choose a representation for $\uL$. Finally, we will also refer $\chi(\oL, \uL)$ as a colored hyperlink.

\end{notation}

\begin{notation}(Parametrization of curves)\label{n.n.3}\\
Let $\vec{y}^u \equiv (y_0^u, y_1^u, y_2^u, y_3^u) : [0,1] \rightarrow \bR \times \bR^3$ be a parametrization of a loop $\ol^u$, $u=1, \ldots, \on$. We will write $y^u(s) = (y_1^u(s), y_2^u(s), y_3^u(s))$ and $\vec{y}^u(s) \equiv \vec{y}_s^u$. We will also write $\vec{y}^u = (y^u_0, y^u)$. Similar notation for $\vec{\varrho}^{v}$, $v = 1, \ldots, \un$, which is a parametrization of a loop $\ul^v$. When the loop is oriented, we will choose a parametrization which is consistent with the assigned orientation.

Let $\rho: S \rightarrow \bR^3$ for some set $S$. Typically, $S = I, I^2$ or $I^3$. We can write $\rho(s) \equiv (\rho_1(s), \rho_2(s), \rho_3(s))$. Refer to Notation \ref{n.v.1}. We will write
\beq \hat{\rho}_i(s) =
 \left\{
  \begin{array}{ll}
    (\rho_2(s), \rho_3(s)), & \hbox{$i=1$;} \\
    (\rho_1(s), \rho_3(s)), & \hbox{$i=2$;} \\
    (\rho_1(s), \rho_2(s)), & \hbox{$i=3$.}
  \end{array}
\right. \nonumber \eeq

\end{notation}

\begin{notation}(Parametrization of surfaces)\label{n.s.3}
Choose an orientable, closed and bounded surface $S \subset \bR^4$, with or without boundary. If it has a boundary $\partial S$, then $\partial S$ is assumed to be a time-like hyperlink. Do note that we allow $S$ to be disconnected, with finite number of components. Parametrize it using \beq \vec{\sigma}: (t, \bar{t}) \in I^2 \mapsto \left(\sigma_0(t,\bar{t}), \sigma_1(t,\bar{t}), \sigma_2(t,\bar{t}), \sigma_3(t,\bar{t}) \right)\in  \bR^4 \nonumber \eeq and let \beq J_{\alpha\beta} = \frac{\partial \sigma_\alpha}{\partial t} \frac{\partial \sigma_\beta}{\partial \bar{t}} - \frac{\partial \sigma_\alpha}{\partial \bar{t}} \frac{\partial \sigma_\beta}{\partial t}. \nonumber \eeq

When we project $S$ inside $\bR^3$ as $\pi_0(S)$, we can parametrize it using $\sigma \equiv (\sigma_1, \sigma_2, \sigma_3)$. Let
\begin{align*}
K_\sigma(\hat{t}) \equiv& K_\sigma(t,\bar{t}) := (J_{01}(t,\bar{t}), J_{02}(t,\bar{t}), J_{03}(t,\bar{t})),\\
J_\sigma(\hat{t}) \equiv& J_\sigma(t,\bar{t}) := \frac{\partial}{\partial t}\sigma(t, \bar{t}) \times  \frac{\partial}{\partial\bar{t}}\sigma(t, \bar{t}) \equiv (J_{23}(\hat{t}), J_{31}(\hat{t}), J_{12}(\hat{t})).
\end{align*}
\end{notation}

\section{Einstein-Hilbert path integral}

Any spin connection $\omega$ described in Section \ref{s.pre} can be written as $\omega \equiv A^a_{\alpha\beta} \otimes dx_a\otimes \hat{E}^{\alpha\beta}$, whereby $A^a_{\alpha\beta}: \bR^4 \rightarrow \bR$ is smooth and we identify $\Lambda^2(V)$ with $\mathfrak{su}(2) \times \mathfrak{su}(2)$. See Notation \ref{n.su.1}. Considering that this space of smooth spin connections is too big for our purpose, we need to `trim' down this space.

If we are considering $\mathcal{A}_{\bR^4, \mathfrak{g}}$, the standard approach would be to consider $\mathcal{A}_{\bR^4, \mathfrak{g}}$, modulo gauge transformations. Let $\{F_\alpha\}$ be any basis in $\mathfrak{g}$. Under axial gauge fixing, every $A \in \mathcal{A}_{\bR^4, \mathfrak{g}}$ can be gauge transformed into $A^i_{\alpha} \otimes dx_i \otimes F^{\alpha} $, $A^i_{\alpha}: \bR^4 \rightarrow \bR$ smooth, subject to the conditions \beq A^1_{\alpha}(0, x^1, 0, 0) = 0,\ A^2_{\alpha}(0, x^1, x^2, 0)=0,\ A^3_{\alpha}(0, x^1, x^2, x^3) = 0. \label{e.r.1} \eeq

Now, 3+1 gravity is not a gauge theory, in the sense that if we interpret $e$ and $\omega$ as gauge fields, then the Einstein-Hilbert action should be invariant under gauge transformation. But there is no such action in gauge theory. Furthermore spin connection is $\Lambda^2(V)$-valued one form, not exactly a gauge field.  However, we still can apply axial gauge fixing and by making the identification $\Lambda^2(V) \cong \mathfrak{su}(2) \times \mathfrak{su}(2)$, we now  consider \beq \omega = A^i_{\alpha\beta} \otimes dx_i\otimes \hat{E}^{\alpha\beta} \in \overline{\mathcal{S}}_\kappa(\bR^4) \otimes \Lambda^1(\bR^3)\otimes \mathfrak{su}(2) \times \mathfrak{su}(2)  =: L_\omega. \label{e.o.1} \eeq Observe that $A^{i}_{\alpha\beta} = -A^{i}_{\beta\alpha}\in \overline{\mathcal{S}}_\kappa(\bR^4)$ and $\overline{\mathcal{S}}_\kappa(\bR^4)$ is the Schwartz space discussed in Section \ref{s.ss}.

\begin{rem}
Note that the restrictions given by Equation (\ref{e.r.1}) will not be imposed on $\omega$.
\end{rem}

Recall $e$ is $V$-valued one form. Even though $e$ is not a gauge, we will still apply axial gauge fixing argument as above. As a consequence, we will have to drop the invertibility condition, and consider all non-invertible transformations $e: T\bR^4 \rightarrow V$, written as \beq e = B^i_\gamma \otimes dx_i \otimes E^\gamma \in \overline{\mathcal{S}}_\kappa(\bR^4) \otimes \Lambda^1(\bR^3) \otimes V  =: L_e. \label{e.o.2} \eeq

\begin{rem}
\begin{enumerate}
  \item Note that $e(\partial/\partial x_0) = 0$, so after applying axial gauge fixing, we consider non-invertible transformations $e$.
  \item The restrictions given by Equation (\ref{e.r.1}) will not be imposed on $e$.
  \item The reader may object to apply axial gauge fixing to $e$; after all $e$ is not a gauge and in General Relativity, $e$ defines a metric, which is non-degenerate in classical General Relativity. However, as discussed in \cite{Witten:1988hc}, we must consider $e$ to be non-invertible to make sense of or develop 2+1 quantum gravity. Likewise here, to develop a 3+1 quantum gravity, we have to consider non-invertible $e$.
\end{enumerate}
\end{rem}

At this point, it is good to discuss the significance of $w$ and $e$. By identifying $\Lambda^2(V)$ with $\mathfrak{su}(2) \times \mathfrak{su}(2)$, we interpret $\omega$ as a connection with values in $\mathfrak{su}(2) \times \mathfrak{su}(2)$. In Notation \ref{n.su.1}, the first copy of $\mathfrak{su}(2)$ is generated by $\{\hat{E}^{0i}\}_{i=1}^3$, which corresponds to boost in the $x_i$ direction in the Lorentz group; the second copy of $\mathfrak{su}(2)$ is generated by $\{\hat{E}^{\tau(i)}\}_{i=1}^3$, which corresponds to rotation about the $x_i$-axis in the Lorentz group. When we give a representation $\rho^\pm$ to a colored loop $\ol$, which we interpret as representing a particle, we are effectively assigning values to the translational and angular momentum of this particle.

The vierbein $e$ can be interpreted as translating $V$, a 4-dimensional vector space. By choosing an orthonormal basis $\{f_\alpha\}_{\alpha=0}^3$ using the Minkowski metric, we may interpret $f_\alpha$ as generator for translation in the $f_\alpha$ direction, which corresponds to translation in the Poincare group. Note that the Lorentz group is a Lie subgroup of the Poincare Lie group. This means that we may think of $\{\omega, e\}$ as a connection with values in the Poincare Lie Algebra.

\begin{defn}\label{d.t.5}(Time ordering operator)\\
For any permutation $\sigma \in S_r$, \beq \mathcal{T}(A(s_{\sigma(1)})\cdots A(s_{\sigma(r)})) = A(s_1)\cdots A(s_r),\ s_1 > s_2 > \ldots > s_r. \nonumber \eeq Suppose now our matrices $A^u(s)$ are indexed by the curves $u$ and time $s$. Extend the definition of the time ordering operator, first ordering in decreasing values of $u$, followed by the time $s$.
\end{defn}

Consider two oriented hyperlinks, $\oL = \{\ol^u \}_{u=1}^{\on}$, $\uL = \{\ul^v \}_{v=1}^{\underline{n}}$ in $\bR \times\bR^3$. Color each component of $\oL$ with representation $\rho_u$. The hyperlinks $\oL$ and $\uL$ are entangled together to form an oriented colored hyperlink, denoted by $\chi(\oL, \uL)$.  Let $q \in \bR$ be known as a charge.

Define
\begin{align*}
V(\{\ul^v\}_{v=1}^{\underline{n}})(e) :=& \exp\left[ \sum_{v=1}^{\un} \int_{\ul^v} \sum_{\gamma=0}^3 B^i_\gamma \otimes dx_i\right], \\
W(q; \{\ol^u, \rho_u\}_{u=1}^{\on})(\omega) :=& \prod_{u=1}^{\on}\Tr_{\rho_u}  \mathcal{T} \exp\left[ q\int_{\ol^u} A^i_{\alpha\beta} \otimes dx_i\otimes \hat{E}^{\alpha\beta}  \right].
\end{align*}
Here, $\mathcal{T}$ is the time-ordering operator defined in Definition \ref{d.t.5}.

\begin{rem}
The term $\mathcal{T} \exp\left[ q\int_{\ol^u} \omega \right]$ needs some further explanation. If we regard $\Lambda^2(V) \cong \mathfrak{su}(2)\times \mathfrak{su}(2)$, then the former is known as a holonomy operator along a loop $\ol^u$, for a spin connection $\omega$.
\end{rem}

%

Our aim in this article is to give a plausible definition for an Einstein-Hilbert path integral, of the form \beq \frac{1}{Z}\int_{\omega \in L_\omega,\ e \in L_e}H(e, \omega)V(\{\ul^v\}_{v=1}^{\un})(e) W(q; \{\ol^u, \rho_u\}_{u=1}^{\on})(\omega)\  e^{i S_{EH}(e, \omega)}\ De D\omega, \label{e.eha.1} \eeq whereby $De$ and $D\omega$ are Lebesgue measures on $L_e$ and $L_\omega$ respectively and \beq Z = \int_{\omega \in L_\omega,\ e \in L_e}e^{i S_{EH}(e, \omega)}\ De D\omega. \nonumber \eeq

Here, $H$ is some continuous function, possibly taking values in $\bR$ or in some Lie Algebra. In this article, we will consider 3 possible functions, namely
\begin{itemize}
  \item the area of some surface $S \subset \bR^3$,
  \item the volume of a region $R \subset \bR^3$,
  \item and finally the curvature, integrated over some surface $S \subset \bR^4$.
\end{itemize}
These 3 functions will be dealt with separately, in Sections \ref{s.ao}, \ref{s.vo} and \ref{s.co} respectively.

To define these three path integrals, we will show that one can write the path integral in the form of a Chern-Simons path integral, which was studied in \cite{CS-Lim01} and \cite{CS-Lim03}. See Section \ref{s.eha}. Using similar arguments in our previous work, we will write down a set of Chern-Simons rules, given by Definition \ref{d.cs.r.1}, to define the path integral in all three cases. Our main result in this article is to derive these definitions, which are respectively given by Definitions \ref{d.api}, \ref{d.vpi} and \ref{d.cpi} respectively. In a sequel to this article, we will compute explicitly these path integrals, using topological invariants. See \cite{EH-Lim03}, \cite{EH-Lim04} and \cite{EH-Lim05}.

The idea of using hyperlinks in $\bR \times \bR^3$ in the quantization of gravity is not new. The idea of using loops to describe quantum gravity appeared in \cite{PhysRevLett.61.1155}. In that article, the authors wrote down a path or functional integral using a suitable (infinite dimensional) measure. Unfortunately, they did not state the choice of this measure or even give a plausible definition for such an ill-defined integral.

\section{Schwartz space}\label{s.ss}

\begin{notation}\label{n.s.4}
For a vector space $V$, $V^{\otimes^n} \equiv V \otimes \cdots \otimes V$ will mean the $n$-th tensor product of $V$. The notation $V^{\times^n} \equiv V \times \cdots \times V$ means the $n$-th direct product of $V$. If $V$ is an inner product space, then $V^{\otimes^n}$ inherits the tensor inner product.
\end{notation}

\begin{notation}\label{n.n.1}
In this article, $\vec{y} \equiv (y_0, y)$, whereby $y \equiv (y_1, y_2, y_3) \in \bR^3$. If $x \in \bR^n$, we will write $(p_\kappa^x)^2$ to denote the $n$-dimensional Gaussian function, center at $x$, variance $1/\kappa^2$. For example, \beq p_\kappa^x(\cdot) = \frac{\kappa^2}{2\pi}e^{-\kappa^2|\cdot - x|^2/4},\ x \in \bR^4. \nonumber \eeq We will also write $(q_\kappa^x)^2$ to denote the 1-dimensional Gaussian function, i.e. \beq q_\kappa^x(\cdot) = \frac{\sqrt{\kappa}}{(2\pi)^{1/4}}e^{-\kappa^2 (\cdot - x)^2/4}. \nonumber \eeq
\end{notation}

\begin{notation}\label{n.bk.1}
Later on, we will approximate the Dirac-delta function with $p_\kappa$. The bigger the $\kappa$, the better is the approximation. In the end, we will let $\kappa$ go to infinity.

Let $\bk := \kappa/2\sqrt{4\pi}$. This is an important factor, which we need to use throughout this article. As we will see later, we need the correct powers of $\bk$, to ensure that our path integrals will converge to something meaningful.
\end{notation}

Consider the inner product space $\mathcal{S}_\kappa(\mathbb{R})$ which is contained inside the Schwartz space. The space $\mathcal{S}_\kappa(\mathbb{R})$ consists of functions of the form $f  \sqrt{\phi_\kappa}$, whereby $\phi_\kappa$ is the Gaussian function $ \phi_{\kappa}(x) = \kappa e^{-\kappa^2|x|^2/2}/(2\pi)^{1/2}$ and $f$ is a polynomial.

Let $f, g$ be polynomials in $\bR$. The inner product $\langle \cdot, \cdot \rangle$ is given by \beq \langle f \sqrt{\phi_\kappa}, g \sqrt{\phi_\kappa} \rangle = \int_{\bR} f\cdot g \cdot \phi_\kappa\ d\lambda, \nonumber \eeq $\lambda$ is Lebesgue measure on $\mathbb{R}$. Let $\overline{\mathcal{S}}_\kappa(\bR)$ be the smallest Hilbert space containing $\mathcal{S}_\kappa(\mathbb{R})$, using this inner product. Let $\overline{\mathcal{S}}_\kappa(\bR^4)$ be the smallest Hilbert space containing $\mathcal{S}_\kappa(\bR)^{\otimes^4}$.

Suppose $f \in \mathcal{S}_\kappa(\bR^4)$ and $g \notin \overline{\mathcal{S}}_\kappa(\bR^4)$, but $g$ is bounded and continuous. If further $f$ is $L^1$ integrable, by abuse of notation, we will write \beq \langle f, g \rangle := \int_{\bR^4} f\cdot g\ d\lambda, \nonumber \eeq integrating using Lebesgue measure on $\bR^4$.

Suppose $V$ is some vector space and consider the tensor product $C^\infty(\bR^4) \otimes V$. Let $\sum_u \alpha_u \otimes \beta_u \in C^\infty(\bR^4) \otimes V$. We will abuse notation and write for $\gamma \in C^\infty(\bR^4)$, \beq \left\langle \gamma,\sum_u \alpha_u \otimes \beta_u \right\rangle := \sum_u \int_{\bR^4 }\gamma\cdot \alpha_u\ d\lambda\ \otimes \beta_u, \nonumber \eeq
provided the integral $\int_{\bR^4} \gamma \cdot \alpha_u\ d\lambda$ converges.

In summary, we wish to highlight to the reader, that in the rest of this article, when we write $\langle \cdot, \cdot \rangle$, it means integrate using Lebesgue measure, for a given product of $C^\infty$ functions. If $f, g \in C^\infty(\bR^n)$, then \beq \langle f, g \rangle \equiv \int_{\bR^n} f\cdot g\ d\lambda, \nonumber \eeq whereby we integrate the product using Lebesgue measure over $\bR^n$.

\section{Important Linear operators}\label{s.lo}

See Notation \ref{n.s.4}. We will often write $V^{\times^3}$ to mean the vector space consisting of 3-vectors, whose components take values in $V$. For example, $\bR^3$ will be the usual vector space consisting of real 3-vectors. The vector space $V^{\times^9}$ will refer to the space containing vectors with 9 components, whose components take values in $V$.

Given a 3-vector $u = (u_1, u_2, u_3)$, $u_i \in C^\infty(\bR^4)$, we may identify $C^\infty(\bR^4)^{\times^3}$ with a subspace inside $(C^\infty(\bR^4) \otimes \Lambda^1(\bR^3))^{\times^3}$, by \beq u \in C^\infty(\bR^4)^{\times^3} \longleftrightarrow (u_1 \otimes dx_1, u_2 \otimes dx_2, u_3 \otimes dx_3) \in (C^\infty(\bR^4) \otimes \Lambda^1(\bR^3))^{\times^3}. \nonumber \eeq

We will write a 9-vector in $V^{\times^9}$ as $u = (u_1, u_2, u_3)$, whereby each $u_i \in V^{\times^3}$. In the case when $V = C^\infty(\bR^4)$, we may identify $C^\infty(\bR^4)^{\times^9}$ with a subspace inside $(C^\infty(\bR^4) \otimes \Lambda^1(\bR^3))^{\times^9}$, by \beq u  \longleftrightarrow (u_1 \otimes dx_1, u_2 \otimes dx_2, u_3 \otimes dx_3) \in (C^\infty(\bR^4) \otimes \Lambda^1(\bR^3))^{\times^9}. \nonumber \eeq So, each $u_i \otimes dx_i \in (C^\infty(\bR^4) \otimes \Lambda^1(\bR^3))^{\times^3}$.

The Hodge star operator $\ast$ is a linear isomorphism between $\Lambda^1(\bR^3)$ and $\Lambda^2(\bR^3)$ using the volume form $dx_1\wedge dx_2 \wedge dx_3$, i.e.
\begin{align*}
\ast(dx_1) = dx_2 \wedge dx_3,\ \ \ast(dx_2) = dx_3 \wedge dx_1,\ \ \ast(dx_3) = dx_1\wedge dx_2.
\end{align*}
We define another linear isomorphism $\ddag$ between $\Lambda^1(\bR^3)$ and $\Lambda^2(\bR^3)$, by
\begin{align*}
\ddag(dx_i \wedge dx_j + dx_k \wedge dx_i) =& dx_i,
\end{align*}
for $(i,j,k) \in C_3$.

Certain operators arise during the analysis of the Chern-Simons path integrals in \cite{CS-Lim01}, \cite{CS-Lim02} and \cite{CS-Lim03}. The following linear operators act on dense subsets in $\overline{\mathcal{S}}_\kappa(\bR)$ and $\overline{\mathcal{S}}_\kappa(\bR^4)$.

\begin{defn}(Integral operators)\label{d.lo.1}
\begin{enumerate}
\item
For $x = (x_0,x_1, x_2, x_3)$, write \beq x(s_a) :=
\left\{
  \begin{array}{ll}
    (s_0,x_1, x_2, x_3), & \hbox{$a=0$;} \\
    (x_0,s_1, x_2, x_3), & \hbox{$a=1$;} \\
    (x_0,x_1, s_2, x_3), & \hbox{$a=2$;} \\
    (x_0,x_1, x_2, s_3), & \hbox{$a=3$.}
  \end{array}
\right. \nonumber \eeq
\item Let $\partial_a \equiv \partial/\partial x_a$ be a differential operator. There is an operator $\partial_a^{-1}$ acting on a dense subset in $\overline{\mathcal{S}}_\kappa(\bR^4)$, \beq (\partial_a^{-1}f)(x) := \frac{1}{2}\int_{-\infty}^{x_a} f(x(s_a))\ ds_a - \frac{1}{2}\int_{x_a}^{\infty} f(x(s_a))\ ds_a,\ f \in \overline{\mathcal{S}}_\kappa(\bR^4). \label{e.d.1} \eeq Here, $x_a \in \bR$. Notice that $\partial_a\partial_a^{-1}f \equiv f$ and $\partial_a^{-1}f$ is well-defined provided $f$ is in $L^1$, but it is not inside $\overline{\mathcal{S}}_\kappa(\bR^4)$.
\item
Let $\lambda \in \overline{\mathcal{S}}_\kappa(\bR^4)$. We define an operator \beq m_\kappa^i(\lambda) := \frac{1}{\kappa}\partial_i + \lambda,\ i=1, 2, 3, \nonumber \eeq $\lambda$ acts on $f \in \overline{\mathcal{S}}_\kappa(\bR^4)$ by multiplication. Its inverse, $m_\kappa^{i}(\lambda)^{-1}$ acts on a dense subset in $\overline{\mathcal{S}}_\kappa(\bR^4)$ by
\begin{align}
(m_\kappa^{i}(\lambda)^{-1} h)(x) := \frac{\kappa}{2}\left[ \int_{-\infty}^{x_i} - \int_{x_i}^\infty  \right] e^{(s_i-x_i)\kappa\lambda(x(s_i))} h(x(s_i))\ ds_i   ,\ h \in \overline{\mathcal{S}}_\kappa(\bR^4) . \nonumber
\end{align}
\end{enumerate}
\end{defn}

\begin{rem}\label{r.lo.1}
When $\lambda \equiv 0$, then $m_\kappa^{i}(0)^{-1}  = \kappa\partial_i^{-1}$.
\end{rem}

\begin{notation}\label{n.k.1}
Refer to Notations \ref{n.v.1} and \ref{n.n.1}. For each $i = 1, 2, 3$, write
\beq \left\langle p_\kappa^{\vec{x}}, p_\kappa^{\vec{y}} \right\rangle_i =
\left\langle p_\kappa^{\hat{x}_{i}}, p_\kappa^{\hat{y}_{i}} \right\rangle \left\langle q_\kappa^{x_{i}}, \kappa\partial_0^{-1}q_\kappa^{y_{i}} \right\rangle\left\langle \partial_0^{-1}q_\kappa^{x_{0}}, q_\kappa^{y_{0}}\right\rangle. \nonumber \eeq

Here, \beq \partial_0^{-1}q_\kappa^{x_{0}}(t) \equiv \frac{1}{2}\int_{-\infty}^t q_\kappa^{x_{0}}(\tau)\ d\tau -
\frac{1}{2}\int_{t}^\infty q_\kappa^{x_{0}}(\tau)\ d\tau. \nonumber \eeq

Note that $\left\langle \partial_0^{-1}q_\kappa^{x_{0}}, q_\kappa^{y_{0}}\right\rangle$  means we integrate $\partial_0^{-1}q_\kappa^{x_{0}} \cdot q_\kappa^{y_{0}}$ over $\bR$, using Lebesgue measure. It is well-defined because $q_\kappa^{x_0}$ is in $L^1$. Refer to Section \ref{s.ss}.
\end{notation}

To define our Einstein-Hilbert path integrals later on, we need to introduce the following differential operators.

\begin{defn}(Differential operators)\label{d.lo.2}
\begin{enumerate}
\item\label{d.lo.2a} We will consider real 3-vectors, each component taking values in the real line. However, we may consider a 3-vector whose components are in $C^\infty(\bR^4) \otimes \Lambda^1(\bR^3)$. Given a differential operator $\partial_j $, it acts on $u^i \otimes dx_i \in C^\infty(\bR^4) \otimes \Lambda^1(\bR^3)$, $j = 1, 2, 3$ by \beq \partial_j( u^i \otimes dx_i) = \sum_{i \neq j} \frac{\partial u^i}{\partial x_j} \otimes dx_j \wedge dx_i \in C^\infty(\bR^4) \otimes \Lambda^2(\bR^3). \nonumber \eeq Here, there is no implied sum over the $j$.
\item\label{d.lo.2b} Consider a 3-vector $u = (u_1, u_2, u_3) \in (C^\infty(\bR^4) \otimes \Lambda^1(\bR^3))^{\times^3}$. Given $v = (v_1, v_2, v_3) \in (C^\infty(\bR^4))^{\times^3}$, we can define the cross product $v \times u$ by
    \beq v \times u = \left(v_2 u_3 - v_3u_2,\ v_3u_1 - v_1 u_3,\ v_1 u_2 - v_2 u _1 \right)
    . \nonumber \eeq
\item Let $u = (u_1, u_2, u_3) \in (C^\infty(\bR^4) \otimes \Lambda^1(\bR^3))^{\times^3}$. By abuse of notation, we define \beq \nabla \times (u_1, u_2, u_3) := \left(\partial_2 u_3 - \partial_3 u_2,\ \partial_3u_1 - \partial_1 u_3,\ \partial_1 u_2 - \partial_2 u_1 \right).
     \nonumber \eeq Note that each component lies in $C^\infty(\bR^4) \otimes \Lambda^2(\bR^3)$.
\end{enumerate}
\end{defn}

\section{Chern-Simons Integrals}

Suppose we have a (real) Hilbert space $H$, with inner product $\langle \cdot, \cdot \rangle$. Write $H_{\bC} \equiv  \bC \otimes_{\bR} H$ to be the complexification of $H$. The Chern-Simons integral is typically an infinite dimensional integral over $H^{\times^2} \equiv H \times H$.

In \cite{CS-Lim01} and \cite{CS-Lim03}, we defined the Chern-Simons path integrals over in $\bR^3$ and $S^2 \times S^1$ respectively. The definition of such ill-defined path integrals is via constructing an Abstract Wiener space using the Segal Bargmann Transform, followed by defining a path integral of the form \beq \frac{1}{Z}\int_{(w_+ ,w_-) \in H^{\times^2}} e^{\langle w_+, \alpha_+ \rangle}e^{\langle w_-, \alpha_- \rangle}e^{i\langle w_+, w_- \rangle}\ Dw_+ Dw_-, \nonumber \eeq
$\alpha_+, \alpha_- \in H $, with \beq Z = \int_{(w_+ ,w_-) \in H^{\times^2}} e^{i\langle w_+, w_- \rangle}\ Dw_+ Dw_-, \nonumber \eeq over the Abstract Wiener space. Note that $Dw_\pm$ is some Lebesgue measure over $H$, which does not exist. See \cite{MR0461643}.

Using Fourier transform and analytic continuation (See Proposition 3.3 in \cite{CS-Lim01}.), we define the above
integral as $e^{i\langle \alpha_+, \alpha_- \rangle}$, $i = \sqrt{-1}$. From this definition, we can easily extend to more general type of path integrals.

\begin{defn}
An integral written on $H^{\times^2}$ is said to be a Chern-Simons integral if it is of the form \beq \frac{1}{Z}\int_{(w_+ ,w_-) \in H^{\times^2}} F(w_+, w_-)e^{i\langle w_+, w_- \rangle}\ Dw_+ Dw_-, \nonumber \eeq for some continuous function $F$ and \beq Z = \int_{(w_+,w_-) \in H^{\times^2}}e^{i\langle w_+, w_- \rangle}\ Dw_+ Dw_- \nonumber \eeq is some normalization constant.
\end{defn}

Let $T_{\bar{u}}: \bC \rightarrow {\rm End}(H_\bC)$ be a linear operator. And let $\beta_u, \hat{\beta}_{\bar{u}}, \alpha_\pm \in H$ be fixed, for $u=1,\ldots, n$ and $\bar{u}=1, \ldots, d$. Typically, the Chern-Simons integral we want to compute is of the form \beq \frac{1}{Z}\int_{(w_+,w_-) \in H^{\times^2}} F(\langle w_-, \beta_1 \rangle, \ldots, \langle w_-, \beta_n \rangle) e^{\langle w_-, \sum_{\bar{u}=1}^dT_{\bar{u}}(\langle w_-, \hat{\beta}_{\bar{u}} \rangle )\alpha_- \rangle} e^{\langle w_+, \alpha_+ \rangle} e^{i\langle w_+, w_- \rangle}\ Dw_+ Dw_-. \label{e.cs.1} \eeq Here, $F$ is some continuous function on $\bR^n$, which admits an analytic continuation on $\bC^n$.

\begin{rem}
Expression \ref{e.cs.1} is not the most general form whereby we can make a sensible definition for it, but for those Chern-Simons integrals that we are going to consider, this will suffice.
\end{rem}

The following definition is taken from Definition 8 in \cite{CS-Lim03}, which can be thought of as a generalization of Proposition 3.3 in \cite{CS-Lim01}.

\begin{defn}\label{d.cs.1}
We define the integral in Expression \ref{e.cs.1} as \beq F(i\langle \alpha_+, \beta_1 \rangle, \ldots, i\langle \alpha_+, \beta_n \rangle) e^{i\langle \alpha_+,  \sum_{\bar{u}=1}^dT_{\bar{u}}(i\langle \alpha_+, \hat{\beta}_{\bar{u}} \rangle )\alpha_- \rangle}. \nonumber \eeq
\end{defn}

\begin{rem}
From $e^{\langle w_+, \alpha_+ \rangle}$, we replace $w_-$ with $i\alpha_+$ in the expression \beq F(\langle w_-, \beta_1 \rangle, \ldots, \langle w_-, \beta_n \rangle) e^{\langle w_-, \sum_{\bar{u}=1}^dT_{\bar{u}}(\langle w_-, \hat{\beta}_{\bar{u}} \rangle )\alpha_- \rangle}. \nonumber \eeq This will give us the expression in Definition \ref{d.cs.1}.
\end{rem}

Unfortunately, the Chern-Simons path integrals that one is interested in is not exactly in the above form. (See \cite{CS-Lim01}and \cite{CS-Lim03}.) Typically, we need to consider \beq \frac{1}{Z}\int_{(u_+,u_-) \in H^{\times^2}} F(u_+, u_-)e^{i\langle u_+, Tu_- \rangle}\ Du_+ Du_-, \label{e.cs.2} \eeq with \beq Z = \int_{(u_+,u_-) \in H^{\times^2}}e^{i\langle u_+, Tu_- \rangle}\ Du_+ Du_-. \nonumber \eeq Here, $T$ is some unbounded operator acting on $H$.

%

To define the path integral in Expression \ref{e.cs.2}, the conventional approach would be to write it in the form
\beq \frac{1}{Z}\int_{(u_+,u_-) \in H^{\times^2}} F(u_+, T^{-1}Tu_-)e^{i\langle u_+, Tu_- \rangle}\ \det[T]^{-1}Du_+ D[Tu_-], \nonumber \eeq with \beq Z = \int_{(u_+,u_-) \in H^{\times^2}}e^{i\langle u_+, Tu_- \rangle}\ \det[T]^{-1}Du_+ D[Tu_-]. \nonumber \eeq Now $\det[T]^{-1}$ is typically some undetermined constant. It will be factored out and canceled with another copy in $Z$. Finally, we replace $Tu_- \mapsto u_-$ and we interpret Expression \ref{e.cs.2} as \beq \frac{1}{\tilde{Z}}\int_{(u_+,u_-) \in H^{\times^2}} F(u_+, T^{-1}u_-)e^{i\langle u_+, u_- \rangle}\ Du_+ Du_-, \nonumber \eeq with \beq \tilde{Z} = \int_{(u_+,u_-) \in H^{\times^2}}e^{i\langle u_+, u_- \rangle}\ Du_+ Du_-. \nonumber \eeq

Because it is not possible to define the determinant of $T$, the above heuristic change of variables argument is not mathematically rigorous. However, it does serve as a starting point on how to define the Chern-Simons integrals that we are really interested in and give us a plausible definition for such an integral.

\subsection{Chern-Simons Path Integral in $\bR^4$}\label{ss.cs}

We now need to describe our infinite dimensional Hilbert space, defined over a 4-manifold $\tilde{M} = \bR \times \bR^3$. 

The infinite dimensional Hilbert space we will consider is $H = L^2(\tilde{M})\otimes W $, $W$ is some finite dimensional inner product space. The inner product on $H$ is the tensor inner product from both $L^2(\tilde{M})$ and $W$. We will denote the inner products from $L^2(\tilde{M})$, $W$ and their tensor inner product by the same symbol $\langle \cdot, \cdot \rangle$.

Let $\{e_i^u\}$ be some orthonormal basis in $W$, indexed by $i$ and $u$. The index $u$ can take values from 1 to some whole number, but $i$ will take values 1, 2 and 3. Hence, the dimension of $W$ must be a multiple of 3.

Any $w \in H$ can be written in the form $w \equiv w_u^i\otimes e_i^u$. Suppose $L: W \rightarrow {\rm End}(W)$. We also assume that for $x \in W$, $L(x)$ is a skew-symmetric operator, i.e. $\langle L(x)z, y \rangle = -\langle z, L(x)y \rangle$. For $w_u^i, w_{\bar{u}}^j \in L^2(\tilde{M})$, define a linear operator $L(w_u^i \otimes e_i^u): L^2(\tilde{M})\otimes W  \rightarrow L^2(\tilde{M}) \otimes W$ by
\beq [L(w_u^i \otimes e_i^u )(w_{\bar{u}}^j \otimes e_j^{\bar{u}})](m) := L(w_u^i(m)e_i^u)(w_{\bar{u}}^j(m) e_j^{\bar{u}}) \in W. \nonumber \eeq Note that here, we evaluate at the point $m \in \tilde{M}$, so $w_u^i(m) e_i^u \in W$, similarly for $w_{\bar{u}}^i(m) e_i^{\bar{u}}$. Hence the operator $L(w_u^i \otimes e_i^u )$ acts pointwise for each $m \in \tilde{M}$. Typically, $L(w_u^i \otimes e_i^u)$ is some multiplication operator.

Finally, let $D: L^2(\tilde{M}) \otimes W \rightarrow L^2(\tilde{M}) \otimes W$ be some skew-symmetric differential operator, acting on $L^2(\tilde{M}) \otimes W$. So $D + L(w_-)$ is skew-symmetric, i.e. \beq \langle [D + L(w_-)]w_+, w_- \rangle = -\langle w_+, [D + L(w_-)]w_- \rangle,\ w_\pm \in L^2(\tilde{M}) \otimes W. \nonumber \eeq

At the point $\vec{x} \in \tilde{M}$, \beq [D + L(w_-)]w_+(\vec{x}) = (Dw_+)(\vec{x}) + L(w_-(\vec{x}))w_+(\vec{x}) = [D + L(w_-(\vec{x}))](w_+)(\vec{x}). \nonumber \eeq Hence, \beq [D + L(w_-)]^{-1}w_+(\vec{x}) \equiv [D + L(w_-(\vec{x}))]^{-1}w_+(\vec{x}). \label{e.ti.1} \eeq

\begin{defn}\label{d.om.1}
Let $H = L^2(\tilde{M}) \otimes W $, whereby $W$ is a finite dimensional inner product space and  $\{x_0,x_1, x_2, x_3\}$ be coordinates for $\bR \times \bR^3$.

Fix a basis
$\{ e_i^u\ :\ i=1, 2, 3,\ u=1, \ldots, d\}$ for $W$ and thus any $w_\pm \in H$ can be written in the form $w_\pm = w_{\pm, u}^i \otimes e_i^u $ in the latter. We denote all inner products with $\langle \cdot, \cdot \rangle$.
Introduce a Lie Algebra, $V$ and suppose $\{E^u\}_{u=1}^d \subset V$ is a basis.

Define
\begin{align*}
\Omega_1 :=&
\left\{ \int_L w_{\pm, u}^i \otimes dx_i \otimes E^u\ |\ L\ {\rm is\ a}\ 1-{\rm dim\ submanifold\ in}\ \bR \times \bR^3  \right\}, \\
\Omega_2:=& \left\{ \int_S g^{ab}(w_-)\otimes dx_a \wedge dx_b\ |\ S\ {\rm is\ a}\ 2-{\rm dim\ submanifold\ in}\ \bR \times \bR^3 \right\}, \\
\Omega_3:=& \left\{ \int_R g^{abc}(w_-)\otimes dx_a \wedge dx_b \wedge dx_c\ |\ R\ {\rm is\ a}\ 3-{\rm dim\ submanifold\ in}\ \bR \times \bR^3 \right\}.
\end{align*}
In the above, $g^{ab}$ and $g^{\alpha \beta \mu}$ are continuous functions on $W$.
\end{defn}

\begin{rem}\label{r.om.1}
\begin{enumerate}
  \item Note that the functions $g^{ab}$ need not be real-valued. To keep things in general, one can assume that it take values in some finite dimensional vector space.
  \item For a 1-submanifold $M^1$ to be considered in $\Omega_1$, we typically consider a hyperlink. However, we may in certain occasions consider an open curve. Note that an integral in $\Omega_1$ is actually $V$-valued.
  \item For a 2-submanifold $M^2$ to be considered in $\Omega_2$, henceforth referred to as a surface, we do allow the surface to have a boundary. Typically, we consider closed and bounded surfaces, with or without boundary. We can consider a finite union of such surfaces, with empty intersection.
  \item For a 3-submanifold $M^3$ to be considered in $\Omega_3$, it should be closed and bounded, henceforth referred to as a compact region. We allow it to be disconnected, with finitely many components.
\end{enumerate}
\end{rem}

\begin{example}\label{ex.l.1}
Assume that $\oL \equiv \{\ol^v\}_{v=1}^{\on}$ is a hyperlink. Now, to compute a typical integral from $\Omega_1$, for each loop $\ol^v$, we will choose a parametrization $\vec{y}^v \equiv (y_0^v, y_1^v, y_2^v, y_3^v): I \rightarrow \tilde{M}$ such that $\vec{y}^v(I)$ describes the loop $\ol^v \subset \tilde{M}$. Similarly, for each loop $\ul^{\bar{v}}$ in $\uL \equiv \{\ul^{\bar{v}}\}_{\bar{v}=1}^{\un}$, let $\vec{\varrho}^{\bar{v}} \equiv (\varrho_0^{\bar{v}}, \varrho_1^{\bar{v}}, \varrho_2^{\bar{v}}, \varrho_3^{\bar{v}}): I \rightarrow \tilde{M}$ such that $\vec{\varrho}^{\bar{v}}(I)$ describes the loop $\ul^{\bar{v}} \subset \tilde{M}$.

Explicitly, the integral in $\Omega_1$ is computed as
\begin{align}
\int_{\oL} w_{+,u}^i \otimes dx_i \otimes E^u  \equiv& \sum_{v=1}^{\on}\int_I w_{+,u}^i(\vec{y}^v(s))  y_i^{v,\prime}(s)\otimes E^u_{s}\ ds
=: \int_{\oL}\overline{G}^1\left\{w_{+,u}^iy_i^{v,\prime}\otimes E^u\right\}. \label{e.o.3a}
\end{align}

Here, $E^u_{s} \equiv E^u$ and the subscript $s$ is to keep track of the ordering of the matrices $E^u$. This is necessary because when we take tensor products of these terms, we need to time order them according to the time ordering operator, given in Definition \ref{d.t.5}.

We will also need the following integral,
\begin{align}
\int_{\uL} \sum_{u=1}^dw_{-,u}^i \otimes dx_i \equiv& \sum_{\bar{v}=1}^{\un}\sum_{u=1}^d\int_I w_{-,u}^i(\vec{\varrho}^{\bar{v}}(s))  \varrho_i^{\bar{v},\prime}(s)\ ds
=: \int_{\uL}\underline{G}^1\left\{w_{-,u}^i\varrho_i^{\bar{v},\prime}\right\}.
\label{e.o.3}
\end{align}
Note that this integral is real-valued.
\end{example}

\begin{example}(Surface Integral)\label{ex.l.2}\\
Assume that $M^2 \equiv S$ is a connected surface and $\{g^{ab}\}$ is a set of continuous real-valued functions on $W$. Now, to compute a typical integral from $\Omega_2$, we will choose a parametrization $\vec{\sigma} \equiv (\sigma_0, \sigma_1, \sigma_2, \sigma_3): I^2 \rightarrow \tilde{M} = \bR \times \bR^3$ such that $\vec{\sigma}(I^2)$ describes the surface $S \subset \tilde{M}$. See Notation \ref{n.s.3}.

Recall $w_- = w_{-, u}^i \otimes e_i^u$ and $w_-(\vec{x}) \equiv w_{-, u}^i(\vec{x})e_i^u$. Explicitly, the integral becomes $(\hat{s} = (s, \bar{s}))$
\begin{align}
\int_S &g^{ab}(w_-) dx_a \wedge dx_b
\equiv \int_{I^2} g^{0i}(w_-(\vec{\sigma}(\hat{s})) )\ J_{0i}(\hat{s})\ d\hat{s}
+ \int_{I^2} g^{\tau(j)}(w_-(\vec{\sigma}(\hat{s})))\ J_{\tau(j)}(\hat{s})\ d\hat{s} \nonumber \\
=:& \int_{M^2}G^2\{w_{-,u}^i(\vec{x})\} \nonumber
\end{align}
\end{example}

\begin{example}(Volume Integral)\label{ex.l.3}\\
Assume that $M^3 \equiv R$ is a compact region and $\{g^{abc}\}$ is a set of real-valued continuous functions on $W$. For simplicity, we assume that $M^3 \subset \{0\} \times \bR^3$. Now, to compute a typical integral from $\Omega_3$, we will choose a parametrization $\vec{\rho} \equiv (0, \rho_1, \rho_2, \rho_3) \equiv (0, \rho): I^3 \rightarrow \tilde{M}$ such that $\vec{\rho}(I^3)$ describes the compact region $R \subset \bR \times \bR^3$. Now, $w_- = w_{-, u}^i \otimes e_i^u$ and $w_-(\vec{x}) \equiv w_{-, u}^i(\vec{x}) e_i^u$.

Explicitly, the integral becomes $(\tau \equiv (\tau_1, \tau_2, \tau_3))$
\begin{align}
\int_R & g^{abc}(w_-) dx_a \wedge dx_b \wedge dx_c
\equiv \int_{I^3} g^{abc}( w_-(\vec{\rho}(\tau)))\ \left|\frac{\partial \rho}{\partial \tau_1} \cdot \frac{\partial \rho}{\partial \tau_2} \times \frac{\partial \rho}{\partial \tau_3}\right|(\tau)\ d\tau_1d\tau_2d\tau_3
\nonumber \\
=:& \int_{M^3}G^3\{w_{-,u}^i(\vec{x})\}. \nonumber
\end{align}
\end{example}

Recall the time ordering operator $\mathcal{T}$ defined in Definition \ref{d.t.5}. Note that
\begin{align*}
\mathcal{T}\exp\left[ \int_{\oL}\overline{G}^1\left\{w_{+,u}^iy_i^{v,\prime}\otimes E^u\right\} \right]
=&  \sum_{n=0}^\infty \frac{1}{n!}\mathcal{T} \left[ \int_{\oL}\overline{G}^1\left\{w_{+,u}^iy_i^{v,\prime}\otimes E^u\right\} \right]^{\otimes^n} \in \bigoplus_{n=0}^\infty V_{\bC}^{\otimes^n}, \\
\exp\left[ \int_{\uL}\underline{G}^1\left\{w_{-,u}^i \varrho_i^{\bar{v},\prime}\right\} \right] =&  \sum_{n=0}^\infty \frac{1}{n!}\left[ \int_{\uL}\underline{G}^1\left\{w_{-,u}^i\varrho_i^{\bar{v},\prime}\right\} \right]^{n} \in \bR.
\end{align*}
Here, $w^{\otimes^n}$ means take the tensor product of $n$-copies of $w$ and $V_\bC \equiv V \otimes_{\bR}\bC$.

\begin{rem}
We refer the reader to \cite{CS-Lim02} on how does one compute \beq \mathcal{T} \left[ \exp\left( \sum_{v=1}^{\on}\int_I w_{+,u}^i(\vec{y}^v(s))  y_i^{v,\prime}(s)\otimes E^u_{s}\ ds \right)\right]. \nonumber \eeq However, as shown in our calculations later, it is not necessary to keep track of the subscript $s$ and we can effectively drop the time ordering operator for simplicity. Thus, the reader can safely ignore the subscript $s$.
\end{rem}

Recall $w_\pm = w_{\pm, u}^i \otimes e^u_i$ and let \beq \exp\left[i\langle w_{+,u}^i \otimes e_i^u, [D+L(w_-)] w_{-,\bar{u}}^j \otimes e_j^{\bar{u}} \rangle\right]\ Dw_+ Dw_- \equiv D\Lambda. \nonumber \eeq Let $Z = \int_{H \times H} D\Lambda$.
As it turns out, certain functional integrals can be written similarly to a Chern-Simons integral.

Suppose we want to make sense of the following path integral \beq \frac{1}{Z}\int_{(w_+, w_- )\in H^{\times^2}} \int_{M^p}G^p\{w_{-,\bar{u}}^i(\vec{x})\} \otimes \left[ e^{\int_{\oL} w_{+,u}^i  \otimes dx_i \otimes E^u }e^{ \int_{\uL}\sum_{\bar{u}} w_{-,\bar{u}}^j \otimes dx_j } \right]D\Lambda, \label{e.cs.3} \eeq for $p=0, 2, 3$. See Examples \ref{ex.l.1}, \ref{ex.l.2} and \ref{ex.l.3}. When $p = 0$, we define $\int_{M^p}G^p(w_-(\vec{x})) := 1$.

\begin{rem}
\begin{enumerate}
  \item The above path integral is not the most general form which we can give a definition.
  \item If we apply the time ordering operator, then the path integral will take values in \beq  \bigoplus_{n=0}^\infty V_{\bC}^{\otimes^n} . \nonumber \eeq 
\end{enumerate}

\end{rem}



Let $\delta^{\vec{x}}$ be the Dirac-delta function, i.e. $w_{\pm,u}^i(\vec{x}) \equiv \langle w_{\pm,u}^i, \delta^{\vec{x}} \rangle$. Of course, $\delta^{\vec{x}}$ does not lie in the Hilbert space $L^2(\tilde{M})$. So we have to approximate it using $L^1$ functions on $\tilde{M}$, i.e. $p_\kappa^{\vec{x}} \rightarrow \delta^{\vec{x}}$ in the distribution sense, as $\kappa \rightarrow \infty$.

\begin{example}
Refer to Example \ref{ex.l.1}.
We can write Equations (\ref{e.o.3a}) and (\ref{e.o.3}) as
\begin{align}
\sum_{v=1}^{\on}\int_I&\ ds \langle w_{+,u}^i, \delta^{\vec{y}^v(s)} \rangle\ y_i^{v,\prime}(s)\ \otimes E^u_{s} =: \int_{\oL}\overline{G}^1\{\langle w_{+,u}^i, \delta^{\vec{y}^v} \rangle y_i^{v,\prime}\otimes E^u\}, \label{e.pi.2}\\
\sum_{\bar{v}=1}^{\un}\int_I&\ ds  \sum_{\bar{u}=1}^d\langle w_{-,\bar{u}}^j, \delta^{\vec{\varrho}^{\bar{v}}(s)} \rangle\ \varrho_j^{\bar{v},\prime}(s)=: \int_{\uL}\underline{G}^1\{\langle w_{-,\bar{u}}^j, \delta^{\vec{\varrho}^{\bar{v}}} \rangle \varrho_j^{\bar{v},\prime}\}\equiv  \int_{\uL}\underline{G}^1\left\{ \left\langle w_{-}, \delta^{\vec{\varrho}^{\bar{v}}}\otimes \sum_{j=1}^3 \varrho_j^{\bar{v},\prime}e_j^{\bar{u}} \right\rangle \right\} \nonumber
\end{align}
respectively. Replace $\delta$ with $p_\kappa$ in Equation (\ref{e.pi.2}), then we have
\begin{align}
\sum_{v=1}^{\on}&\int_I\ ds \langle w_{+,u}^i, p_\kappa^{\vec{y}^v(s)} \rangle\ y_i^{v,\prime}(s)\ \otimes E^u_s =: \left\langle w_{+},
\sum_{v=1}^{\on}\sum_{i=1}^3 \sum_{u=1}^d \int_I\ ds\ p_\kappa^{\vec{y}^v(s)} \ y_i^{v,\prime}(s)\ \otimes e_i^u \otimes E^u_{s} \right\rangle. \label{e.pi.1}
\end{align}
Write \beq \pi_+\{\vec{y}^v\} := \sum_{v=1}^{\on}\sum_{i=1}^3 \sum_{u=1}^d \int_I\ ds\ p_\kappa^{\vec{y}^v(s)} \ y_i^{v,\prime}(s)\ \otimes e_i^u \otimes E^u_{s} . \label{e.pi.3} \eeq
\end{example}

\begin{rem}
Note that $\pi_+\{\vec{y}^v\}$ is in $L^2(\tilde{M}) \otimes W \otimes V$. And \beq \left\langle w_{+,u}^i \otimes e_i^u, \int_I\ ds\ p_\kappa^{\vec{y}^v(s)} \ y_j^{v,\prime}(s)\ \otimes e_j^{\bar{u}}\otimes E^{\bar{u}}_{s}  \right\rangle = \int_I\ ds \left\langle w_{+, \bar{u}}^j, p_\kappa^{\vec{y}^v(s)}\right\rangle\ y_j^{v,\prime}(s)\ \otimes E^{\bar{u}}_{s}, \nonumber \eeq which takes values in $V$. Note that on both the LHS and RHS, there is no sum over the index $j$ and $\bar{u}$.
\end{rem}

To write down the definition for the path integral in Expression \ref{e.cs.3}, we have to go through the long and tedious process of constructing an Abstract Wiener space using the Segal Bargmann Transform, applying Definition \ref{d.cs.1} and furthermore, approximating the Dirac-delta function using a Gaussian function. This was the approach used in \cite{CS-Lim02} and \cite{CS-Lim03} to define the Chern-Simons path integrals, given respectively by Equation (2.6) in \cite{CS-Lim02} and Equation (24) in \cite{CS-Lim03}. The same approach can be adapted to give a definition for Expression \ref{e.cs.3}.

In anticipation of future work, we propose to write down a set of rules, so that in future, one can write down a definition for the path integral in Expression \ref{e.cs.3}, without going through the full derivation using the Abstract Wiener space approach. These rules are derived from Equation (2.6) in \cite{CS-Lim02} and Equation (24) in \cite{CS-Lim03}.

\begin{defn}\label{d.cs.r.1}
The following are the Chern-Simons rules for evaluating an integral given by Expression \ref{e.cs.3}.

\begin{enumerate}
  \item\label{d.cs.r.1a} Replace $w_{\pm,u}^i(\vec{y})$ with $\langle w_{\pm,u}^i, \delta^{\vec{y}} \rangle$.
  \item\label{d.cs.r.1b} Write $T_{w_-} = D + L(w_-)$ and note that $T_{w_-}^{-1}\equiv [D + L(w_-)]^{-1}$ is skew-symmetric, i.e. \beq \left\langle T_{w_-}^{-1}w_+, w_- \right\rangle = -\left\langle w_+, T_{w_-}^{-1}w_- \right\rangle,\ w_\pm \in L^2(\tilde{M}) \otimes W.  \nonumber \eeq
  See Equation (\ref{e.ti.1}) for the definition of $T_{w_-}^{-1}$.

  Now \beq w_{-,\bar{u}}^i(\vec{x})= \langle w_{-,v}^j(\vec{x}) e_j^v,e_i^{\bar{u}}\rangle
=\left\langle [T_{w_-}^{-1} T_{w_-}w_{-,v}^j \otimes e_j^v](\vec{x}),e_i^{\bar{u}}\right\rangle = \left\langle T_{w_-(\vec{x})}^{-1} [ T_{w_-}w_{-}](\vec{x}),e_i^{\bar{u}}\right\rangle .
\nonumber \eeq Here, $T_{w_-(\vec{x})}^{-1} = [D + L(w_-(\vec{x}))]^{-1}$.

So we can write \beq
\langle T_{w_-(\vec{x})}^{-1} [ T_{w_-}w_{-}](\vec{x}),e_i^{\bar{u}}\rangle = \langle T_{w_-(\vec{x})}^{-1} T_{w_-}w_{-}, \delta^{\vec{x}}\otimes e_i^{\bar{u}}\rangle = -\langle  T_{w_-}w_{-}, T_{w_-(\vec{x})}^{-1} \delta^{\vec{x}}\otimes e_i^{\bar{u}}\rangle
. \nonumber \eeq
    Hence the integral can be written in the form
  \begin{align*}
  \frac{1}{Z}&\int_{(w_+, w_-)\in H^{\times^2}}Dw_+ Dw_-e^{i\langle w_+,  T_{w_-}w_- \rangle}  \int_{M^p}G^p\left\{ \left\langle T_{w_-(\vec{x})}^{-1}T_{w_-}w_{-}, \delta^{\vec{x}}\otimes e_i^{\bar{u}} \right\rangle \right\} \\
  &\otimes
   \exp\left(\int_{\oL}\overline{G}^1\left\{\langle w_{+,u}^i, \delta^{\vec{y}^v} \rangle y_i^{v,\prime}\otimes E^u\right\} \right)\\
  &\hspace{4cm} \exp\left(\int_{\uL}\underline{G}^1\left\{\left\langle T_{w_-(\vec{\varrho})}^{-1}T_{w_-}w_{-}, \delta^{\vec{\varrho}^{\bar{v}}}\otimes \sum_{i=1}^3\varrho_i^{\bar{v},\prime} e_i^{\bar{u}} \right\rangle \right\} \right),
  \end{align*}
  which is equal to
  \begin{align*}
  \frac{1}{Z}&\int_{(w_+, w_- )\in H^{\times^2}}Dw_+ Dw_-e^{i\langle w_+,  T_{w_-}w_- \rangle} \int_{M^p}G^p\left\{-\left\langle T_{w_-}w_-, T_{w_-(\vec{x})}^{-1}\delta^{\vec{x}} \otimes e_i^{\bar{u}} \right\rangle \right\} \\
  &\otimes  \exp\Bigg(\int_{\oL}\overline{G}^1\left\{\left\langle w_{+,u}^i, \delta^{\vec{y}^v} \right\rangle y_i^{v,\prime}\otimes E^u \right\}\Bigg) \\
& \hspace{4cm} \exp\Bigg( \int_{\uL}\underline{G}^1\left\{-\left\langle T_{w_-}w_-, T_{w_-(\vec{\varrho})}^{-1}\delta^{\vec{\varrho}^{\bar{v}}}\otimes \sum_{i=1}^3\varrho_i^{\bar{v},\prime}e_i^{\bar{u}} \right\rangle \right\}\Bigg) .
  \end{align*}
  Replace $T_{w_-}w_-$ by $w_-$ and write $w_-(\vec{x}) = \langle w_{-,u}^j, \delta^{\vec{x}} \rangle e_j^{u} \in W$.
  Hence \beq T_{w_-(\vec{x})}^{-1} = T_{\langle w_{-,u}^j, \delta^{\vec{x}} \rangle e_j^{u}}^{-1} =:T\left[\langle w_{-,u}^j, \delta^{\vec{x}} \rangle e_j^{u} \right]^{-1}. \nonumber \eeq

  So we will now focus on the following path integral of the form
  \begin{align}
  \frac{1}{Z}&\int_{(w_+, w_- )\in H^{\times^2}}Dw_+ Dw_-e^{i\langle w_+,  w_- \rangle}  \int_{M^p}G^p\left\{-\left\langle w_-, T\left[\langle w_{-,u}^j, \delta^{\vec{x}} \rangle e_j^{u}\right]^{-1}\delta^{\vec{x}}\otimes e_i^{\bar{u}} \right\rangle \right\} \nonumber \\
  &\otimes \exp\Bigg( \int_{\oL}\overline{G}^1\left\{\left\langle w_{+,u}^i, \delta^{\vec{y}^v} \right\rangle y_i^{v,\prime}\otimes E^u\right\} \Bigg)\nonumber \\
& \hspace{3cm} \exp\Bigg( \int_{\uL}\underline{G}^1\left\{-\left\langle w_-, T\left[\left\langle w_{-,u}^j, \delta^{\vec{\varrho}^{\bar{v}}} \right\rangle e_j^{u}\right]^{-1}\delta^{\vec{\varrho}^{\bar{v}}}\otimes \sum_{i=1}^3\varrho_i^{\bar{v},\prime}e_i^{\bar{u}} \right\rangle \right\} \Bigg) , \label{e.w.4}
  \end{align}
  which resembles that of a Chern-Simons integral
  and \beq Z = \int_{(w_+, w_- )\in H^{\times^2}}Dw_+ Dw_-e^{i\langle w_+,  w_- \rangle}. \nonumber \eeq

  \begin{rem}
  Instead of giving a plausible meaning to Expression \ref{e.cs.3}, we will now define Expression \ref{e.w.4} by treating it as a Chern-Simons integral.
  \end{rem}

  \item\label{d.cs.r.1c} Because $\delta^x$ is not inside $L^2(\tilde{M})$, we replace it with a Gaussian-like function supported in $\tilde{M}$, $p_\kappa^{\vec{x}}$, such that $p_\kappa^{\vec{x}} \rightarrow \delta^{\vec{x}}$ in distribution. The above integral now becomes
  \begin{align*}
  \frac{1}{Z}&\int_{(w_+, w_- )\in H^{\times^2}}Dw_+ Dw_-e^{i\langle w_+,  w_- \rangle}  \int_{M^p}G^p\left\{-\left\langle w_-, T\left[\langle w_{-,u}^j, p_\kappa^{\vec{x}} \rangle e_j^{u}\right]^{-1}p_\kappa^{\vec{x}}\otimes e_i^{\bar{u}} \right\rangle \right\} \\
  &\otimes \Bigg[ \exp\Bigg( \int_{\oL}\overline{G}^1\left\{\left\langle w_{+,u}^i, p_\kappa^{\vec{y}^v} \right\rangle y_i^{v,\prime}\otimes E^u\right\} \Bigg)\\
& \hspace{3cm} \exp\Bigg( \int_{\uL}\underline{G}^1\left\{-\left\langle w_-, T\left[\left\langle w_{-,u}^j, p_\kappa^{\vec{\varrho}^{\bar{v}}} \right\rangle e_j^{u}\right]^{-1}p_\kappa^{\vec{\varrho}^{\bar{v}}}\otimes \sum_{i=1}^3\varrho_i^{\bar{v},\prime}e_i^{\bar{u}} \right\rangle \right\} \Bigg) \Bigg].
  \end{align*}
  Note that the operator $T\left[\left\langle w_{-,u}^j, p_\kappa^{\vec{\varrho}^{\bar{v}}} \right\rangle e_j^{u}\right]^{-1}$ acts on $p_\kappa^{\vec{\varrho}^{\bar{v}}} \otimes \sum_{i=1}^3 \varrho_i^{\bar{v},\prime}e_i^{\bar{u}} \in L^2(\tilde{M}) \otimes W$.

  \item\label{d.cs.r.1d} We need to put in factors of $\bk = \kappa/2\sqrt{4\pi}$. For each integral from $\Omega_p$, we scale it by factors of $\bk$ as follows:
   \begin{align*}
   \int_{M^p}G^p \mapsto \bk^p \int_{M^p}G^p,\ \ \int_{\uL}\underline{G}^1 \mapsto \bk \int_{\uL}\underline{G}^1
   .
   \end{align*}
   However, for $\int_{\oL}\overline{G}^1$, we scale it by a factor of $\kappa\bk$, i.e. \beq \int_{\oL}\overline{G}^1 \mapsto \kappa\bk \int_{\oL}\overline{G}^1. \nonumber \eeq

  \item\label{d.cs.r.1e}
Refer to Equation (\ref{e.pi.3}) for the definition of $\pi_+\{\vec{y}^v\}$. From Equations (\ref{e.pi.2}) and (\ref{e.pi.1}), \beq \kappa\bk\int_{\oL}\overline{G}^1\left\{\left\langle w_{+,u}^i, p_\kappa^{\vec{y}^v} \right\rangle y_i^{v,\prime}\otimes E^u\right\} = \left\langle w_+, \kappa\bk\pi_+\{\vec{y}^v\} \right\rangle. \nonumber \eeq
Finally apply Definition \ref{d.cs.1}, i.e. replace $w_{-,\bar{u}}^i$ and $w_-$ respectively with
\begin{align*}
w_{-,u}^j &\longmapsto \sqrt{-1}\kappa\bk\pi_{+, u}^j\{\vec{y}^v\} := \sqrt{-1}\kappa\bk\sum_{v=1}^{\on}\int_I\ p_\kappa^{\vec{y}^v(s)}\ y_j^{v,\prime}(s)\ ds ,\\
w_- &\longmapsto \sqrt{-1}\kappa\bk\pi_+\{\vec{y}^v\} = \sqrt{-1}\kappa\bk\sum_{v=1}^{\on}\sum_{i=1}^3 \sum_{u=1}^d \int_I\ ds \ p_\kappa^{\vec{y}^v(s)} \ y_i^{v,\prime}(s)\ \otimes e_i^u\otimes E^u_{s} ,
\end{align*}
and the path integral is defined as
\begin{align*}
  \bk^p &\int_{M^p}G^p\left\{-\left\langle i\kappa\bk\pi_+\{\vec{y}^v\}, T\left[\left\langle i\kappa\bk\pi_{+, u}^j\{\vec{y}^v\}, p_\kappa^{\vec{x}} \right\rangle e_j^{u}\right]^{-1}p_\kappa^{\vec{x}}\otimes e_i^{\bar{u}} \right\rangle \right\} \\
&\otimes \exp\left(\bk
  \int_{\uL}
  \underline{G}^1\left\{ -\left\langle i\kappa\bk\pi_+\{\vec{y}^v\}, T\left[\left\langle i\kappa\bk\pi_{+, u}^j\{\vec{y}^v\}, p_\kappa^{\vec{\varrho}^{\bar{v}}} \right\rangle e_j^{u}\right]^{-1}p_\kappa^{\vec{\varrho}^{\bar{v}}} \otimes \sum_{i=1}^3\varrho_i^{\bar{v},\prime}e_i^{\bar{u}} \right\rangle \right\} \right).
\end{align*}
Denote \beq
\nu_{\bar{u}}^i(1; \bar{v}, \kappa) =\left\langle \sqrt{-1}\kappa\bk\pi_{+,\bar{u}}^i\{\vec{y}^v\}, p_\kappa^{\vec{\varrho}^{\bar{v}}} \right\rangle :=
\sqrt{-1}\kappa\bk\sum_{v=1}^{\on}\int_I\ \langle p_\kappa^{\vec{y}^v(s)}, p_\kappa^{\vec{\varrho}^{\bar{v}}} \rangle\ y_i^{v,\prime}(s)\ ds \nonumber \eeq and
\beq
\nu_{\bar{u}}^i(p;\kappa) = \left\langle \sqrt{-1}\kappa\bk\pi_{+,\bar{u}}^i\{\vec{y}^v\}, p_\kappa^{\vec{x}} \right\rangle :=
\sqrt{-1}\kappa\bk\sum_{v=1}^{\on}\int_I\ \langle p_\kappa^{\vec{y}^v(s)}, p_\kappa^{\vec{x}} \rangle\ y_i^{v,\prime}(s)\ ds. \nonumber \eeq

Note that $\nu_{\bar{u}}^i(1;\bar{v}, \kappa) \in \bC$ and $\nu_{\bar{u}}^i(1;\bar{v}, \kappa)e_i^{\bar{u}}, \nu_u^j(p,\kappa)e_j^u \in \bC \otimes W$. Refer to Examples \ref{ex.l.2} and \ref{ex.l.3}. Then we can rewrite our expression for the path integral as
\begin{align}
  \bk^p &\int_{M^p}G^p\left\{-i\left\langle \kappa\bk\pi_+\{\vec{y}^v\}, T\left[\nu_{u}^j(p;\kappa) e_j^{u}\right]^{-1}p_\kappa^{\vec{x}}\otimes e_i^{\bar{u}} \right\rangle \right\} \nonumber\\
&\otimes \exp\left(-i\bk
  \int_{\uL}
  \underline{G}^1\left\{ \left\langle \kappa\bk\pi_+\{\vec{y}^v\}, T\left[\nu_{u}^j(1;\bar{v}, \kappa) e_j^{u}\right]^{-1}p_\kappa^{\vec{\varrho}^{\bar{v}}} \otimes \sum_{i=1}^3\varrho_i^{\bar{v},\prime}e_i^{\bar{u}} \right\rangle \right\} \right), \label{e.cs.4}
\end{align}
with \beq \pi_+\{\vec{y}^v\} := \sum_{v=1}^{\on}\sum_{i=1}^3 \sum_{u=1}^d \int_I\ ds\ p_\kappa^{\vec{y}^v(s)} \ y_i^{v,\prime}(s)\ \otimes e_i^u\otimes E^u_{s} . \nonumber \eeq
\end{enumerate}
\end{defn}

\begin{rem}
\begin{enumerate}
  \item In Item \ref{d.cs.r.1d}, we scaled by factors $\bk$ for each integral over a submanifold. The reason for this scaling originated from our work in \cite{CS-Lim01} and \cite{CS-Lim03}, whereby we scaled each line integral by a factor $\kappa/2$, instead of $\bk$. But for the line integral involving $\oL$, we need an extra $\kappa$. The reason is because we need this extra factor to obtain non-trivial limits when we take $\kappa$ going to infinity. Such an extra factor is also used in \cite{CS-Lim01} and in \cite{CS-Lim03}.
  \item Note that $T\left[\nu_{u}^j(1;\bar{v}, \kappa) e_j^{u}\right]^{-1}$ can be defined, depending on how we define $D$ and $L$. And it acts on $p_\kappa^{\vec{\varrho}^{\bar{v}}} \otimes e_i^{\bar{u}} \in L^2(\tilde{M}) \otimes W$. Typically, \beq T\left[\nu_{u}^j(1;\bar{v}, \kappa) e_j^{u}\right]^{-1}p_\kappa^{\vec{\varrho}^{\bar{v}}} \otimes \sum_{\bar{u}=1}^d\sum_{i=1}^3\varrho_i^{\bar{v},\prime} e_i^{\bar{u}} \equiv f_{\bar{u}}^i(\kappa;\vec{\varrho}^{\bar{v}}) \otimes e_i^{\bar{u}}, \nonumber \eeq whereby $f_{\bar{u}}^i(\kappa;\vec{\varrho}^{\bar{v}})$ is a $C^\infty$ bounded function on $\tilde{M}$.

      Thus,
      \begin{align*}
      &\left\langle \kappa\bk\pi_+\{\vec{y}^v\}, T\left[\nu_{u}^i(1;\bar{v}, \kappa) e_i^{u}\right]^{-1}p_\kappa^{\vec{\varrho}^{\bar{v}}} \otimes \sum_{\bar{u}=1}^d\sum_{j=1}^3\varrho_j^{\bar{v},\prime} e_j^{\bar{u}} \right\rangle \\
      &\hspace{2cm}= \kappa\bk\sum_{v=1}^{\on}\int_I\ ds\ \left\langle p_\kappa^{\vec{y}^v(s)}, f_{u}^i(\kappa;\vec{\varrho}^{\bar{v}}) \right\rangle\ y_i^{v,\prime}(s)\ \otimes E^u_{s}
      \end{align*}
      and \beq \left\langle p_\kappa^{\vec{y}^v(s)}, f_{u}^i(\kappa;\vec{\varrho}^{\bar{v}}) \right\rangle \equiv \int_{\bR^4}p_\kappa^{\vec{y}^v(s)} \cdot f_{u}^i(\kappa;\vec{\varrho}^{\bar{v}})\ d\lambda \nonumber \eeq is dependent on $s$. (See Section \ref{s.ss}.)


  \item The term \beq \int_{\uL}
  \underline{G}^1\left\{ \left\langle \kappa\bk\pi_+\{\vec{y}^v\}, T\left[\nu_{u}^j(1;\bar{v}, \kappa) e_j^{u}\right]^{-1}p_\kappa^{\vec{\varrho}^{\bar{v}}} \otimes \sum_{i=1}^3\varrho_i^{\bar{v},\prime}e_i^{\bar{u}} \right\rangle \right\} \nonumber \eeq is expressed explicitly as \beq \sum_{\bar{v}=1}^{\un}\sum_{v=1}^{\on} \int_{I^2}\ dsd\bar{s} \ \kappa\bk\left\langle p_\kappa^{\vec{y}^v(s)}, f_{u}^i(\kappa;\vec{\varrho}^{\bar{v}}(\bar{s}) \right\rangle\ y_i^{v,\prime}(s)\ \otimes E^u_{s} . \nonumber \eeq

  \item The term \begin{align}
\int_{M^p}G^p &\left\{-i\left\langle \kappa\bk\pi_+\{\vec{y}^v\}, T\left[\nu_{u}^j(p;\kappa) e_j^{u}\right]^{-1}(p_\kappa^{\vec{x}}\otimes e_j^{u}) \right\rangle \right\} \nonumber\\
&\equiv \int_{M^p}G^p\left\{i\left\langle \kappa\bk T\left[\nu_{u}^j(p;\kappa) e_j^{u}\right]^{-1}\pi_+\{\vec{y}^v\}, p_\kappa^{\vec{x}}\otimes e_i^{\bar{u}} \right\rangle \right\}. \label{e.g.1}
\end{align}
deserves some explanation.

Now, \beq T\left[\nu_{\bar{u}}^i(p;\kappa) e_i^{\bar{u}}\right]^{-1}\pi_+\{\vec{y}^v\} \equiv \sum_{v=1}^{\on}\sum_{j=1}^3 \sum_{u=1}^d \int_I\ ds\ T\left[\nu_{\bar{u}}^i(p;\kappa) e_i^{\bar{u}}\right]^{-1}\left[ p_\kappa^{\vec{y}^v(s)} \otimes y_j^{v,\prime}(s)e_j^u \right]  \otimes E^u_{s}  \nonumber \eeq lies inside $W \otimes V$.
This linear operator $T\left[\nu_{u}^j(p;\kappa) e_j^{u}\right]^{-1}$ acts on $p_\kappa^{\vec{y}^v(s)} \otimes y_i^{v,\prime}(s)e_i^u$.

Recall that $V$ is some Lie Algebra. Hence the RHS of Equation (\ref{e.g.1}) will involve $g(E)$, whereby $g$ is some continuous function and $E$ will be a matrix. There will be some issues on how to define $g(E)$, as it is not true that $g(E)$ can be make sense of. In certain cases, we will show that $g(E)$ can be rigorously defined. So, we will leave the matter as it is and address it later for specific examples to come.

  \item Later, we will show that the terms $\kappa\nu_{\bar{u}}^i(p;\kappa) \rightarrow 0$ and $\kappa\nu_{\bar{u}}^i(1;\bar{v}, \kappa) \rightarrow 0$ as $\kappa$ goes to 0. To compute the limit of Expression \ref{e.cs.4} as $\kappa$ goes to infinity, it suffices to compute the limit of
\begin{align*}
  \bk^p \int_{M^p}G^p&\left\{i\left\langle \kappa\bk D^{-1}\pi_+\{\vec{y}^v\}, p_\kappa^{\vec{x}}\otimes e_i^{\bar{u}} \right\rangle \right\} \\
&\otimes \exp\left[-i\bk
  \int_{\uL}
  \underline{G}^1\left\{ \left\langle \kappa\bk\pi_+\{\vec{y}^v\}, D^{-1}\left(p_\kappa^{\vec{\varrho}^{\bar{v}}} \otimes \sum_{i=1}^3\varrho_i^{\bar{v},\prime}e_i^{\bar{u}} \right) \right\rangle \right\} \right].
\end{align*}

\end{enumerate}

\end{rem}

\section{Einstein-Hilbert Path Integral}\label{s.eha}

We can now begin to define the path integral in Expression \ref{e.eha.1}. First, we will need to rewrite it to make it look like the Chern-Simons path integral, as defined in Expression \ref{e.cs.3}.

\begin{notation}\label{n.n.6}
Recall we defined $A^i_{\alpha\beta}$ and $B^i_\alpha$ in Equations (\ref{e.o.1}) and (\ref{e.o.2}) respectively. Define the following 3-vectors,
\begin{align*}
B^i =& (B_1^i, B_2^i, B_3^i),\ \ B_0 = (B_0^1, B_0^2, B_0^3),\ \ B_i = (B_i^1, B_i^2, B_i^3), \\
A^i_0 =& (A^i_{01}, A^i_{02}, A^i_{03}),\ \ A^i = (A_{23}^i, A_{31}^i, A_{12}^i), \\
A_{23} =& (A_{23}^1, A_{23}^2, A_{23}^3),\ \ A_{31} = (A_{31}^1, A_{31}^2, A_{31}^3),\ \ A_{12} = (A_{12}^1, A_{12}^2, A_{12}^3).
\end{align*}
For 3-vectors $x$, $y$, $x \cdot y$ will denote the usual dot product and $x \times y$ will denote the cross product.

Write $B = (B^1, B^2, B^3)$ and $A_0 = (A_0^1, A_0^2, A_0^3)$. Let $\tilde{B}^i = B_i \times B_0$ and hence write $\tilde{B} = (\tilde{B}^1, \tilde{B}^2, \tilde{B}^3)$, $A = (A_{23}, A_{31}, A_{12})$. These vectors thus defined are 9-vectors. By abuse of notation, we write for a 9-vector $F = (F^1, F^2, F^3)$, each $F^i$ is a 3-vector, \beq B \vec{\times} F = \frac{1}{2}\left(
B^2 \times F^3- B^3 \times F^2, B^3 \times F^1- B^1 \times F^3, B^1 \times F^2- B^2 \times F^1 \right). \nonumber \eeq

In particular,
\begin{align*}
B \vec{\times} B \equiv& \frac{1}{2}\left(
B^2 \times B^3- B^3 \times B^2, B^3 \times B^1- B^1 \times B^3, B^1 \times B^2- B^2 \times B^1 \right)  \\
=& \left( B^2 \times B^3, B^3 \times B^1, B^1 \times B^2 \right) =\left( [B \vec{\times} B]_1, [B \vec{\times} B]_2, [B \vec{\times} B]_3\right),
\end{align*}
which is a 9-vector. Finally, we write \beq \partial_0 A_0 \cdot B \vec{\times}B := \partial_0 A_0^i \cdot [B \vec{\times} B]_i. \nonumber \eeq We will also use the dot to denote the usual scalar product for 9-vectors.
\end{notation}

With Notation \ref{n.n.6}, we have the following proposition.

\begin{prop}
Using Equations (\ref{e.o.1}) and (\ref{e.o.2}), the Einstein-Hilbert action given in Equation (\ref{e.eh.2}) can be written as
\begin{align*}
\int_{\bR^4} \partial_0 A_0  \cdot B \vec{\times} B
- \int_{\bR^4} \partial_0 A \cdot \tilde{B} ,
\end{align*}
whereby it is understood we integrate over Lebesgue measure on $\bR^4$.
\end{prop}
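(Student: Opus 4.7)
The plan is to expand $S_{EH}$ directly in the axial-gauge coefficients $B^i_\mu$ and $A^i_{\alpha\beta}$ from (\ref{e.o.1}) and (\ref{e.o.2}), and to identify each surviving tensor block as one of the cross-product combinations encoded in Notation \ref{n.n.6}. The gauge conditions $B^0_\mu = 0$ and $A^0_{\alpha\beta} = 0$ collapse most of the contractions, so the computation is essentially bookkeeping of Levi-Civita identities.

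First I would simplify the curvature. Since $\omega^0 \equiv 0$, the defining formula for $R^{cd}_{\alpha\beta}$ gives $R^{0k}_{\alpha\beta} = \partial_0 A^k_{\alpha\beta}$ for $k \in \{1,2,3\}$, as both $\partial_k A^0$ and $[\omega^0,\omega^k]$ vanish; a purely spatial $R^{ij}$ will not contribute because the vierbein already forces the form indices $a, b$ in $\epsilon_{abcd} B^a_\mu B^b_\gamma R^{cd}_{\alpha\beta}$ into $\{1,2,3\}$. Using the antisymmetries of both $\epsilon_{abcd}$ and $R^{cd}$, together with the permutation identity $\epsilon_{ab0k} = \epsilon^{abk}$ (for $a,b,k \in \{1,2,3\}$, with $\epsilon^{abk}$ the 3D Levi-Civita symbol), the spacetime contraction reduces to
\beq
\epsilon_{abcd} B^a_\mu B^b_\gamma R^{cd}_{\alpha\beta} = 2\,\epsilon^{abk} B^a_\mu B^b_\gamma \partial_0 A^k_{\alpha\beta}. \nonumber
\eeq

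Next I would split $\epsilon^{\mu\gamma\alpha\beta}$ according to where the Lorentz index $0$ sits. If $\{\alpha,\beta\} = \{0,l\}$ then $A^k_{\alpha\beta} = A^k_{0l}$ is a boost component (first $\mathfrak{su}(2)$ summand), the restricted sum over the remaining spatial $\mu,\gamma$ assembles $(B^a \times B^b)_l$, and the outer form-index sum uses the identity $\sum_{a,b}\epsilon^{abk}(B^a \times B^b) = 2(B\vec{\times} B)_k$ to yield $\int \partial_0 A_0 \cdot B\vec{\times} B$. If instead $\{\mu,\gamma\} \ni 0$ and $\{\alpha,\beta\} \subset \{1,2,3\}$, then $A^k_{\alpha\beta} = A^k_{qr}$ is a rotation component (second $\mathfrak{su}(2)$ summand), one of $B^a_\mu, B^b_\gamma$ carries Lorentz index $0$ while the other is spatial, and the Lorentz-index sum produces $B^a_0 B^b - B^a B^b_0$; a single relabeling $a \leftrightarrow b$ then shows $\sum_{a,b}\epsilon^{abk}(B^a_0 B^b_l - B^a_l B^b_0) = -2(B_l \times B_0)_k = -2\tilde B^l_k$, and collecting indices via the cyclic convention of Notation \ref{n.n.6} gives $-\int \partial_0 A \cdot \tilde B$. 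The accumulated factors of $2$ from the three antisymmetrizations cancel the overall $\tfrac{1}{8}$ prefactor, leaving coefficient $1$ in front of each term.

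The main obstacle will be the sign and factor bookkeeping across those three successive antisymmetrizations (spacetime $\epsilon_{abcd}$, Lorentz $\epsilon^{\mu\gamma\alpha\beta}$, form-index $\epsilon^{abk}$). In particular, one must verify that the cyclic 9-vector conventions for $A = (A_{23}, A_{31}, A_{12})$ and $\tilde B^l = B_l \times B_0$ are compatible with the signs produced by the Lorentz-index reduction, so that the second integral appears with coefficient exactly $-1$ and no stray factor of $2$ survives.
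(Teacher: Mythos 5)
Your proposal is correct and follows essentially the same route as the paper's proof: both exploit that the axial gauge kills $e(\partial/\partial x_0)$ and $\omega(\partial/\partial x_0)$ so that only the $\partial_0 A^k_{\alpha\beta}$ part of the curvature survives, and both then split the internal contraction according to whether the Lorentz index $0$ sits on the $e\wedge e$ factor (yielding $\partial_0 A_0\cdot B\vec{\times}B$) or on the curvature factor (yielding $-\partial_0 A\cdot\tilde{B}$), with the three factors of $2$ cancelling the $\tfrac{1}{8}$. The only difference is presentational --- the paper organizes the expansion via explicit wedge products of forms while you use Levi-Civita index contractions --- and your sign and factor bookkeeping checks out.
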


\begin{proof}
Refer to Notation \ref{n.su.1}. Suppose that $\omega$ and $e$ are given by Equations (\ref{e.o.1}) and (\ref{e.o.2}). We have
\begin{align*}
e \wedge e =& B^i_\gamma \otimes E^\gamma \otimes dx_i \wedge B^j_\mu \otimes E^\mu \otimes dx_j \\
=& \frac{1}{2}B^i_\gamma B^j_\mu \otimes ( E^\gamma \otimes E^\mu - E^\mu \otimes E^\gamma)\otimes  dx_i \wedge dx_j \\
=& B^1_\gamma B^2_\mu \otimes E^{\gamma \mu}\otimes dx_1  \wedge dx_2 + B^2_\gamma B^3_\mu \otimes E^{\gamma \mu} \otimes dx_2 \wedge dx_3 +B^3_\gamma B^1_\mu \otimes E^{\gamma \mu} \otimes dx_3 \wedge dx_1.
\end{align*}

And
\begin{align*}
d\omega + \omega \wedge \omega =& \partial_0 A^i_{\alpha\beta} \otimes E^{\alpha\beta}\otimes  dx_0 \wedge dx_i + \frac{1}{2}A^i_{\alpha\beta} A^j_{\gamma\mu}\otimes [E^{\alpha\beta}, E^{\gamma\mu}]\otimes  dx_i \wedge dx_j \\
&+ \frac{\partial}{\partial x_i} A^j_{\alpha\beta}\otimes E^{\alpha\beta}\otimes  dx_i \wedge dx_j,\ \ \partial_0 \equiv \frac{\partial}{\partial x_0}.
\end{align*}

Thus, the Einstein-Hilbert action becomes
\begin{align*}
\frac{1}{8}&\int_{\bR^4} \epsilon^{abcd} [e \wedge e]_{ab} \wedge [R]_{cd} \\
=& \frac{1}{8}\int_{\bR^4}\epsilon^{abcd}B^1_\gamma B^2_\mu[E^{\gamma \mu}]_{ab} \cdot \partial_0 A^3_{\alpha\beta}[E^{\alpha\beta}]_{cd} dx_1\wedge dx_2 \wedge dx_0 \wedge dx_3\\
+& \frac{1}{8}\int_{\bR^4}\epsilon^{abcd}B^2_\gamma B^3_\mu[E^{\gamma \mu}]_{ab} \cdot \partial_0 A^1_{\alpha\beta}[E^{\alpha\beta}]_{cd} dx_2\wedge dx_3 \wedge dx_0 \wedge dx_1\\
+&\frac{1}{8}\int_{\bR^4}\epsilon^{abcd}B^3_\gamma B^1_\mu[E^{\gamma \mu}]_{ab} \cdot \partial_0 A^2_{\alpha\beta}[E^{\alpha\beta}]_{cd} dx_3\wedge dx_1 \wedge dx_0 \wedge dx_2.
\end{align*}

The integral can be written as
\begin{align*}
& \frac{1}{2}\int_{\bR^4} B^1 \times B^2 \cdot \partial_0 A^3_{0} +  B^2 \times B^3 \cdot \partial_0 A^1_{0} + B^3 \times B^1 \cdot \partial_0 A^2_{0} \\
&-\frac{1}{2}\int_{\bR^4}  B^2 \times B^1 \cdot \partial_0 A^3_{0} +  B^3 \times B^2 \cdot \partial_0 A^1_{0} + B^1 \times B^3 \cdot \partial_0 A^2_{0} \\
&- \int_{\bR^4}  B^1 B_0^2 \cdot \partial_0 A^3 +  B^2 B_0^3 \cdot \partial_0 A^1 +  B^3  B_0^1\cdot\partial_0 A^2\\
&+\int_{\bR^4}  B^2  B_0^1 \cdot \partial_0 A^3 +  B^3  B_0^2\cdot\partial_0 A^1 +  B^1  B_0^3\cdot\partial_0 A^2,
\end{align*}
which can be further simplified into
\begin{align*}
\frac{1}{2}&\int_{\bR^4}  B^1 \times B^2 \cdot \partial_0 A^3_{0} +  B^2 \times B^3 \cdot \partial_0 A^1_{0} + B^3 \times B^1 \cdot \partial_0 A^2_{0} \\
&-\frac{1}{2}\int_{\bR^4}  B^2 \times B^1 \cdot \partial_0 A^3_{0} +  B^3 \times B^2 \cdot \partial_0 A^1_{0} + B^1 \times B^3 \cdot \partial_0 A^2_{0}\\
&- \int_{\bR^4} \partial_0 A_{23} \cdot [B_1  \times B_0] + \partial_0 A_{31} \cdot [B_2 \times B_0 ] + \partial_0 A_{12}\cdot [B_3 \times B_0 ].
\end{align*}

Using Notation \ref{n.n.6}, the Einstein-Hilbert action can be written as
\begin{align*}
&\int_{\bR^4} \partial_0 A_0  \cdot B \vec{\times} B
- \int_{\bR^4} \partial_0 A \cdot \tilde{B} .
\end{align*}
\end{proof}

If we treat $\tilde{B}$ as an independent variable, then we are lead to define a path integral with product measure
\beq \exp\left[ i\int_{\bR^4} \partial_0 A_0\cdot B \vec{\times} B   - \partial_0 A \cdot \tilde{B} \right]  D[B]D[A_0]D[\tilde{B}]D[A], \nonumber \eeq with all $A_0$, $A$, $B$ and $\tilde{B}$ being independent variables. Write \beq D\Lambda = D[B]D[A_0]D[\tilde{B}]D[A]. \nonumber \eeq

Notice that there are 2 measures, namely \beq \exp\left[ i\int_{\bR^4} \partial_0 A_0\cdot B \vec{\times} B\right]D[B]D[A_0]\ {\rm and}\ \exp\left[ -i\int_{\bR^4} \partial_0 A \cdot \tilde{B}\right]D[\tilde{B}]D[A]. \nonumber \eeq

To reconcile with the model in Subsection \ref{ss.cs}, we see that $W = \bR^9$. The Lie Algebra we have in mind will be $V = \mathfrak{su}(2) \times \mathfrak{su}(2)$. The Hilbert space we need to consider will hence be \beq H = \overline{\mathcal{S}}_\kappa(\bR^4) \otimes \bR^9 \otimes \mathfrak{su}(2) \times \mathfrak{su}(2). \nonumber \eeq

\begin{notation}
Let $\bar{E} = (1, 1, 1)$ be a 3-vector.
\end{notation}

\begin{notation}\label{n.x.1}
Refer to Notations \ref{n.n.5} and \ref{n.su.1}. Consider two oriented hyperlinks, $\oL = \{\ol^u \}_{u=1}^{\on}$, $\uL = \{\ul^v \}_{v=1}^{\un}$ in $\bR^4$, which together form an oriented hyperlink $\chi(\oL, \uL)$. Color the former $\oL$ with a representation for each component. Let $q \in \bR$ be known as a charge. Define
\begin{align*}
V(\{\ul^v\}_{v=1}^{\un})(\{B^i_\gamma\}) :=& \exp\left[ \sum_{v=1}^{\un} \int_{\ul^v} \bar{E} \cdot B^i \otimes dx_i + B_0^i \otimes dx_i\right], \\
W(q; \{\ol^u, \rho_u\}_{u=1}^{\on})(\{A^k_{\alpha\beta}\}) :=& \prod_{u=1}^{n}\Tr_{\rho_u}  \mathcal{T} \exp\left[ q\int_{\ol^u} A^i_{0j} \otimes dx_i\otimes \hat{E}^{0j}  + A_{\tau(j)}^k \otimes dx_k \otimes \hat{E}^{\tau(j)}\right].
\end{align*}
\end{notation}

\begin{rem}
\begin{enumerate}
  \item Here, $\mathcal{T}$ is the time-ordering operator defined in Definition \ref{d.t.5}.
  \item The functional $W$ is to compute the holonomy of a connection, along a hyperlink $\oL$.
  \item The functionals $V$ and $W$ will appear in Sections \ref{s.ao}, \ref{s.vo} and \ref{s.co}.
\end{enumerate}

\end{rem}

\section{Area Path Integral}\label{s.ao}

Refer to Notation \ref{n.s.3}. Fix a closed and bounded orientable surface, with or without boundary, denoted by $S$, inside $\bR^3$. We allow $S$ to be disconnected, with finite number of components. Parametrize it using \beq \sigma: \hat{t} \equiv (t, \bar{t}) \in I^2 \mapsto \sigma(\hat{t}) \in \bR^3 \nonumber \eeq and let $J_\sigma$ denote the Jacobian of $\sigma$. We will write $J_\sigma = (J_{23}, J_{31}, J_{12})$.

We can always write $S$ as a finite disjoint union of surfaces because we allow ambient isotopy of $S$, so without any loss of generality, we assume that the surface lies in the $x_2-x_3$ plane. Hence, $S \subset \bR^3$ with normal given by $e_1= (1, 0, 0)$. Write $\vec{\sigma} = (0, \sigma)$. We will also assume that the projected link $\pi_0(\oL)$ intersects $S$ at finitely many points.

Using the dynamical variables $\{B_\gamma^i\}$ and the Minkowski metric $\eta^{ab}$, we see that the metric $g^{ab} \equiv B^a_\mu\eta^{\mu\gamma}B^b_\gamma$ and the
area is given by \beq {\rm Area\ of}\ S(\{B_\mu^i\}) \equiv A_S(\{B_\mu^i\}) := \int_S \sqrt{g^{22}g^{33} - (g^{23})^2} \ dA. \nonumber \eeq Explicitly, \beq A_S(\{B_\mu^i\}) = \int_{I^2} \sqrt{g^{22}g^{33} - (g^{23})^2}(\vec{\sigma}(\hat{t}))\ J_{23}(\hat{t})\ d\hat{t}. \nonumber \eeq

Refer to Notation \ref{n.x.1}. We want to define the following area path integral,
\begin{align}
\frac{1}{Z_{EH}}\int V( \{\ul^v\}_{v=1}^{\un})(\{B^i_\mu\})&W(q; \{\ol^u, \rho_u\}_{u=1}^{\on})(\{A^k_{\alpha\beta}\})\nonumber\\
& \times  A_S(\{B^i_\mu\}) \exp\left[i\int_{\bR^4} \partial_0 A_0\cdot B \vec{\times} B   - \partial_0 A \cdot \tilde{B}\right] D\Lambda, \label{e.a.3}
\end{align}
with
\begin{align*}
Z_{EH} = \int \exp\left[i\int_{\bR^4} \partial_0 A_0\cdot B \vec{\times} B   - \partial_0 A \cdot \tilde{B}\right] D\Lambda.
\end{align*}

\begin{rem}\label{r.ax.1}
\begin{enumerate}
  \item When $S$ is the empty set, we define $A_\emptyset \equiv 1$, so we write Expression \ref{e.a.3} as $Z(q; \chi(\oL, \uL))$, which in future be termed as the Wilson Loop observable of the colored hyperlink $\chi(\oL, \uL))$.
  \item We will write Expression \ref{e.a.3} as $\hat{A}_S[Z(q; \chi(\oL, \uL))]$.
\end{enumerate}
\end{rem}

We will now make use of Definition \ref{d.cs.r.1} to make sense of the path integral in Expression \ref{e.a.3}. We will go through the steps in Definition \ref{d.cs.r.1} in detail.

Let $e_1=(1,0,0)$ be a 3-vector. Firstly, note that $g^{ij} = B^i \cdot B^j - B_0^i \cdot B_0^j$ and all other entries are 0. Secondly, it is straightforward to see that \beq g^{22}g^{33} - (g^{23})^2 = |B^2 \times B^3 |^2 - |B_1 \times B_0 \cdot e_1|^2 - |B_2 \times B_0 \cdot e_1|^2 - |B_3 \times B_0 \cdot e_1|^2. \nonumber \eeq

Refer to Notation \ref{n.n.6}. Observe that $B^2 \times B^3 - B^3 \times B^2 = 2 B^2 \times B^3$. Thus, we can also write  \beq g^{22}g^{33} - (g^{23})^2 = |[B \vec{\times} B]_1 |^2 - |B_1 \times B_0 \cdot e_1|^2- |B_2 \times B_0 \cdot e_1|^2 - |B_3 \times B_0 \cdot e_1|^2. \label{e.a.7} \eeq

\begin{notation}\label{n.c.1}
Write $m(B) := B \vec{\times}$, $m(B^i) := B^i \times$ and $m(B_i) := B_i \times$ as operators, so we have \beq m(B^i)B^j\equiv B^i \times B^j,\ m(B_i)B_0 \equiv B_i \times B_0\ {\rm and}\ m(B)B \equiv B \vec{\times} B. \nonumber \eeq

Write $\bar{B} = m(B)^{-1}B$ and $\bar{B}^j$ refers to the $j$-th spatial component, which is a 3-vector. So, $B = m(B)^{-1} (B \vec{\times} B)$. Also recall $\tilde{B}^i = B_i \times B_0 = m(B_i)B_0$, so we can write $B_0 = m(B_i)^{-1}\tilde{B}^i$. There is no sum over the index $i$.

For a 3-vector $v \equiv (v_1, v_2, v_3)$, we write $m(B)^{-1}v$ to mean $m(B)^{-1}$ acting on the 9-vector $(v_1\bar{E}, v_2\bar{E}, v_3\bar{E})$, $\bar{E} = (1,1,1)$. And $[m(B)^{-1}v]^j$ will also refer to the $j$-th spatial component, which is a 3-vector, with $[m(B)^{-1}v]^j_i$ referring to its $i$-th component. We will also write $m(B)^{-1}(v \otimes \bar{E}) \equiv m(B)^{-1}v$.
\end{notation}

\begin{defn}
Let $\delta^{\vec{x}}$ be the Dirac-delta function, i.e. for any function $f$, $\langle f, \delta^{\vec{x}} \rangle = f(\vec{x})$. Also refer to Notation \ref{n.bk.1}. Define
\begin{align}
\tilde{V}(\{B^j_\mu\})
&:= \exp\Bigg\{ \sum_{j=1}^3\left\langle B^j, -\sum_{v=1}^{\underline{n}} \int_0^1 \left\{ \left[ m(\langle B, \delta^{\vec{\varrho}^v_{\bar{s}}} \rangle ) \right]^{-1} (\varrho_{\bar{s}}^{v,\prime}\delta^{\vec{\varrho}^v_{\bar{s}}})\right\}^j \right\rangle\ d\bar{s} \nonumber\\
& \hspace{3cm} + \left\langle \tilde{B}^{i}, -\sum_{v=1}^{\underline{n}} \int_0^1 \left[m(\langle B_i, \delta^{\vec{\varrho}^v_{\bar{s}}} \rangle ) \right]^{-1}(\varrho_{\bar{s}}^{v,\prime}\delta^{\vec{\varrho}^v_{\bar{s}}})\right\rangle \ d\bar{s}
\Bigg\}, \label{e.w.2}\\
\tilde{W}(\{A^k_{\alpha\beta}\})
&  := \prod_{u=1}^{\on} \Tr_{\rho_u}  \mathcal{T} \exp\Bigg[\left\langle A^k_{0i}, -q\int_0^1 ds\  \partial_0^{-1}\delta^{\vec{y}^u_s}\right\rangle y^{u,\prime}_{k,s}\otimes \hat{E}^{0i}_s \nonumber\\
& \hspace{3cm}+ \left\langle A^k_{\tau(j)}, -q\int_0^1 ds\  \partial_0^{-1}\delta^{\vec{y}^u_s}\right\rangle y^{u,\prime}_{k,s}\otimes \hat{E}^{\tau(j)}_s
\Bigg]. \label{e.w.3}
\end{align}
\end{defn}

\begin{lem}\label{l.x.1}
Refer to the parametrizations $\vec{y}^u$ and $\vec{\varrho}^v$ defined in Notation \ref{n.n.3}. After doing a change of variables given in Notation \ref{n.c.1}, the path integral in Expression \ref{e.a.3} becomes \beq
\frac{1}{\hat{Z}_{EH}}\int \tilde{V}(\{B^j_\mu\}) \tilde{W}(\{A^k_{\alpha\beta}\})\tilde{A}_S(B, \{\tilde{B}^i\})  e^{i\int_{\bR^4} A_0\cdot  B   -  A \cdot \tilde{B}}  D\Lambda \label{e.a.4} \eeq with \beq
\hat{Z}_{EH} = \int \exp\left[i\int_{\bR^4} A_0\cdot  B   -  A \cdot \tilde{B}\right]
D\Lambda, \nonumber \eeq
after applying Steps \ref{d.cs.r.1a} and \ref{d.cs.r.1b} in Definition \ref{d.cs.r.1}.

Here, $\tilde{V}$ and $\tilde{W}$ are defined by Equations (\ref{e.w.2}) and (\ref{e.w.3}) and \beq \tilde{A}_S(B, \{\tilde{B}^i\}) := \int_{I^2} \sqrt{\left|\left\langle B^1, \delta^{\vec{\sigma}(\hat{t})} \right\rangle \right|^2 - \sum_{j=1}^3\left|\left\langle \tilde{B}^j, \delta^{\vec{\sigma}(\hat{t})} \right\rangle \cdot e_1\right|^2 }\ J_{23}(\hat{t})\ d\hat{t}. \nonumber \eeq
\end{lem}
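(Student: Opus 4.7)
The plan is to apply Steps \ref{d.cs.r.1a} and \ref{d.cs.r.1b} of Definition \ref{d.cs.r.1} directly to each of the four factors comprising Expression \ref{e.a.3}, namely $V$, $W$, the area functional $A_S$, and the Einstein-Hilbert exponential, and then to reassemble the transformed pieces.

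First I would identify the Chern-Simons structure underlying the EH action. Setting $w_+=(A_0,A)$ and $w_-=(B,\tilde B)$, repeated integration by parts (using the anti-self-adjointness of $\partial_0$) casts $i\int \partial_0 A_0\cdot B\vec\times B - \partial_0 A\cdot \tilde B$ as $i\langle w_+,T_{w_-}w_-\rangle$ up to signs, with $D=\partial_0$ serving as the differential piece of $T_{w_-}=D+L(w_-)$ and with $L(w_-)$ implementing the multiplication operator $m(B)$ on the $B$-block (acting trivially on the $\tilde B$-block). Hence the inverse $T_{w_-(\vec x)}^{-1}$ splits as $\partial_0^{-1}\circ m(B(\vec x))^{-1}$ on the $B$-block and $\partial_0^{-1}$ on the $\tilde B$-block, with $m(B_i)^{-1}$ acting via the Notation \ref{n.c.1} convention on $\tilde B^i$.

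Step \ref{d.cs.r.1a} is then routine: rewrite every pointwise field evaluation as a pairing against a Dirac-delta. For $A_S$ I would first apply Equation (\ref{e.a.7}) to express the integrand purely in terms of $[B\vec\times B]_1$ and the scalars $\tilde B^j\cdot e_1$ evaluated on $\vec\sigma(\hat t)$, before making the delta-function replacements; for $V$ and $W$ the replacements are immediate. Next, Step \ref{d.cs.r.1b} performs the formal substitution $T_{w_-}w_-\to w_-$, which linearizes the action to $i\int A_0\cdot B-A\cdot \tilde B$ and, via the skew-symmetry identity $w_{-,\bar u}^i(\vec x)=-\langle w_-,T_{w_-(\vec x)}^{-1}\delta^{\vec x}\otimes e_i^{\bar u}\rangle$, dictates how each remaining factor is re-expressed. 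In $A_S$ this sends $[B\vec\times B]_1\mapsto B^1$ and leaves $\tilde B^j$ unchanged, yielding $\tilde A_S$. In $V$ one uses $B=m(B)^{-1}(B\vec\times B)$ and $B_0=m(B_i)^{-1}\tilde B^i$, producing the two summands in \ref{e.w.2} with the overall minus signs supplied by the skew-symmetry formula. In $W$, the factor $\partial_0$ originally multiplying $A_0,A$ gets moved (by integration by parts) onto the delta functions $\delta^{\vec y^u(s)}$, producing the characteristic $-q\,\partial_0^{-1}\delta^{\vec y^u(s)}$ and giving $\tilde W$ as in \ref{e.w.3}. The Jacobian of the nonlinear change of variable is formally absorbed into $\hat Z_{EH}$.

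The bulk of the work is bookkeeping rather than analysis, and the main obstacle is twofold. First, the substitution in Step \ref{d.cs.r.1b} is genuinely nonlinear because $L(w_-)$ depends on $w_-$ itself, so the naive change of variables $(B,\tilde B)\mapsto (B\vec\times B,\tilde B)$ is not a bona fide diffeomorphism; one must therefore read the transformation purely as the formal prescription of Definition \ref{d.cs.r.1} and verify that it produces each of $\tilde V$, $\tilde W$, $\tilde A_S$ as stated, being careful not to conflate the old and the renamed $B$. Second, tracking the signs demands attention: the minus signs in $\tilde V$ arise from the skew-symmetry identity, while the minus sign in $\tilde W$ comes from moving $\partial_0$ across the pairing via anti-self-adjointness, and these must reconcile with the sign of $-A\cdot\tilde B$ appearing in the simplified action of Expression \ref{e.a.4}. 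Once the signs are correctly matched, combining the four transformed factors yields Expression \ref{e.a.4}, completing the lemma.
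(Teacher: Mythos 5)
Your proposal is correct and follows essentially the same route as the paper's proof: apply Step \ref{d.cs.r.1a} to replace pointwise evaluations by Dirac-delta pairings, then perform the formal change of variables $B\vec{\times}B\mapsto B$, $\partial_0 A_0\mapsto A_0$, $\partial_0 A\mapsto A$ (equivalently $B\mapsto m(B)^{-1}B$, $B_0\mapsto m(B_i)^{-1}\tilde{B}^i$, $A^k_{\alpha\beta}\mapsto\partial_0^{-1}A^k_{\alpha\beta}$), and use the skew-symmetry of $m(B)^{-1}$, $m(B_i)^{-1}$ and $\partial_0^{-1}$ to move these operators onto the delta functions, which is exactly how the paper obtains the minus signs in $\tilde{V}$ and $\tilde{W}$ and the form of $\tilde{A}_S$ from Equation (\ref{e.a.7}). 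Your framing via $T_{w_-}=D+L(w_-)$ is just the abstract version of the same prescription from Definition \ref{d.cs.r.1}, so no substantive difference.
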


\begin{proof}
Firstly, we will write
\begin{align*}
V(\{\ul^v\}_{v=1}^{\un})(\{B^j_\mu\} )  =& \exp\left[ \sum_{v=1}^{\un}\int_0^1 d\bar{s}\ \left\langle \bar{E} \cdot B^j, \delta^{\vec{\varrho}^v_{\bar{s}}}\right\rangle \varrho_{j,\bar{s}}^{v,\prime} + \left\langle B_0^j, \delta^{\vec{\varrho}^v_{\bar{s}}}\right\rangle\varrho_{j,\bar{s}}^{v,\prime}\right], \\
W(q; \{\ol^u\}_{u=1}^{\on})(\{A^k_{\alpha\beta}\} ) =& \prod_{u=1}^{\on}\Tr_{\rho_u}  \mathcal{T}\exp\left[ q\int_0^1 ds\ \left\langle A_{0j}^k, \delta^{\vec{y}^u_s}\right\rangle y_{k,s}^{u,\prime}\otimes \hat{E}^{0j}_s +  \left\langle A_{\tau(j)}^k, \delta^{\vec{y}^u_s}\right\rangle y_{k,s}^{u,\prime}\otimes \hat{E}^{\tau(j)}_s\right].
\end{align*}
Here, $\hat{E}^{\alpha\beta}_s \equiv \hat{E}^{\alpha\beta}$ and $s$ keeps track of the ordering. This is Step \ref{d.cs.r.1a} in Definition \ref{d.cs.r.1}.

Refer to Notation \ref{n.c.1}. We want to replace $B \vec{\times}B \equiv m(B)B$ with $B$, $\partial_0A_0$ with $A_0$
and $\partial_0 A$ with $A$, so that the action becomes $\int_{\bR^4}A_0 \cdot B - A \cdot \tilde{B}$.

This means we need to replace $B_0$ with  $m(B_i)^{-1}\tilde{B}^i$, $B$ with $\bar{B}=m(B)^{-1}B$ and $A^k_{\alpha\beta}$ with $\partial_0^{-1}A^k_{\alpha\beta}$, hence
\begin{align*}
V( \{\ul^v\}_{v=1}^{\un})(\{B^j_\mu\}) \longmapsto& \exp\left[ \sum_{v=1}^{\un}\int_0^1 d\bar{s}\ \left\langle \bar{E} \cdot \bar{B}^j, \delta^{\vec{\varrho}^v_{\bar{s}}}\right\rangle \varrho_{j,\bar{s}}^{v,\prime} + \left\langle [m(B_i)^{-1}\tilde{B}^i]^j, \delta^{\vec{\varrho}^v_{\bar{s}}}\right\rangle\varrho_{j,\bar{s}}^{v,\prime}\right]\\
\equiv& \exp\left[ \sum_{v=1}^{\un}\int_0^1 d\bar{s}\ \left\langle \bar{E} \cdot \bar{B}^j, \delta^{\vec{\varrho}^v_{\bar{s}}}\right\rangle \varrho_{j,\bar{s}}^{v,\prime} + \left\langle [m(B_i(\vec{\varrho}^v_{\bar{s}}))^{-1}\tilde{B}^i]^j, \delta^{\vec{\varrho}^v_{\bar{s}}}\right\rangle\varrho_{j,\bar{s}}^{v,\prime}\right], \\
W(q; \{\ol^u\}_{u=1}^{\on})(\{A^k_{\alpha\beta}\} ) \longmapsto& W(q; \{\ol^u\}_{u=1}^{\on})(\{\partial_0^{-1}A^k_{\alpha\beta} \}).
\end{align*}
Note that \beq \left\langle \bar{E} \cdot \bar{B}^j, \delta^{\vec{\varrho}^v_{\bar{s}}}\right\rangle \equiv \left\langle \bar{E} \cdot [m(B(\vec{\varrho}^v_{\bar{s}}))^{-1}B]^j, \delta^{\vec{\varrho}^v_{\bar{s}}}\right\rangle. \nonumber \eeq

Because $m(B_i)$ and $m(B)$ are skew-symmetric operators, therefore $m(B_i)^{-1}$ and $m(B)^{-1}$ are skew-symmetric operators. Hence
\begin{align}
\exp&\left[ \sum_{v=1}^{\un}\int_0^1 d\bar{s}\ \left\langle \bar{E} \cdot \bar{B}^j, \delta^{\vec{\varrho}^v_{\bar{s}}}\right\rangle \varrho_{j,\bar{s}}^{v,\prime} + \left\langle \left\{\left[m(B_i(\vec{\varrho}^v_{\bar{s}}))\right]^{-1}\tilde{B}^i\right\}^j, \delta^{\vec{\varrho}^v_{\bar{s}}}\right\rangle\varrho_{j,\bar{s}}^{v,\prime}\right] \equiv \tilde{V}(\{B^j_\mu\})\nonumber\\
:=& \exp\Bigg\{ \sum_{j=1}^3\left\langle B^j, -\sum_{v=1}^{\underline{n}} \int_0^1 \left\{[m(B(\vec{\varrho}^v_{\bar{s}}) )]^{-1} (\varrho_{\bar{s}}^{v,\prime}\delta^{\vec{\varrho}^v_{\bar{s}}})\right\}^j \right\rangle\ d\bar{s} \nonumber\\
& \hspace{2cm} + \left\langle \tilde{B}^{i}, -\sum_{v=1}^{\underline{n}} \int_0^1 \left[m(B_i (\vec{\varrho}^v_{\bar{s}}) )\right]^{-1}(\varrho_{\bar{s}}^{v,\prime}\delta^{\vec{\varrho}^v_{\bar{s}}})\right\rangle \ d\bar{s}
\Bigg\}. \label{e.s.1}
\end{align}

Replace $B(\vec{\varrho}^v_{\bar{s}})$ and $B_i (\vec{\varrho}^v_{\bar{s}})$ with $\langle B, \delta^{\vec{\varrho}^v_{\bar{s}}} \rangle$ and $\langle B_i, \delta^{\vec{\varrho}^v_{\bar{s}}}\rangle$ and we will obtain the term $\tilde{V}$.

And making the same substitutions inside Equation (\ref{e.a.7}), we will have the area integrand computed as
\begin{align*}
\int_S \sqrt{|B^1 |^2 - \sum_{j=1}^3|\tilde{B}^j \cdot e_1|^2 }\ dA
=& \int_{I^2} \sqrt{\left|B^1(\vec{\sigma}(\hat{t})) \right|^2 - \sum_{j=1}^3|\tilde{B}^j(\vec{\sigma}(\hat{t})) \cdot e_1|^2 }\ J_{23}(\hat{t})\ d\hat{t} \\
=& \int_{I^2} \sqrt{\left|\left\langle B^1, \delta^{\vec{\sigma}(\hat{t})} \right\rangle \right|^2 - \sum_{j=1}^3\left|\left\langle \tilde{B}^j, \delta^{\vec{\sigma}(\hat{t})} \right\rangle \cdot e_1\right|^2 }\ J_{23}(\hat{t})\ d\hat{t}.
\end{align*}

Note that $\partial_0^{-1}$ is skew-symmetric because $\partial_0$ is skew-symmetric by doing a simple integration by parts, i.e. $\langle \partial_0^{-1}A^k_{\alpha\beta}, B^j_\mu \rangle = \langle A^k_{\alpha\beta}, -\partial_0^{-1}B^j_\mu \rangle$. Hence
\begin{align*}
W&(q; \{\ol^u\}_{u=1}^{\on})(\{\partial_0^{-1}A^k_{\alpha\beta}\} )\equiv \tilde{W}(\{A^k_{\alpha\beta}\}) \\
 :=&  \prod_{u=1}^{\on} \Tr_{\rho_u}  \mathcal{T} \exp\left[\left\langle A^k_{0j}, -q\int_0^1 ds\  \partial_0^{-1}\delta^{\vec{y}^u_s}\right\rangle y^{u,\prime}_{k,s}\otimes \hat{E}^{0j}_s +
\left\langle A^k_{\tau(j)}, -q\int_0^1 ds\  \partial_0^{-1}\delta^{\vec{y}^u_s}\right\rangle y^{u,\prime}_{k,s}\otimes \hat{E}^{\tau(j)}_s
\right].
\end{align*}

\end{proof}

Before we proceed to the rest of the steps in Definition \ref{d.cs.r.1}, we first point out that $m(B(\vec{x}))^{-1}$, $m(B_i(\vec{x}))^{-1}$ cannot be defined. So, we need to make an approximation to these operators, which is the next definition. The reader may wish to compare with the one given in Definition \ref{d.lo.1}.

\begin{defn}
We refer the reader to Section \ref{s.lo}. Also recall $x(s_i)$ defined in Definition \ref{d.lo.1}.

For $i = 1, 2 , 3$, define $\partial^\kappa_{i} :=\frac{1}{\kappa}\partial_i$, which maps $C^\infty(\bR^4) \otimes \Lambda^1(\bR^3) \rightarrow C^\infty(\bR^4) \otimes \Lambda^2(\bR^3)$ and define for a 9-vector $F \equiv (F^1, F^2, F^3)$, $F^i \in (C^\infty(\bR^4)\otimes \Lambda^1(\bR^3))^{\times^3}$,
\begin{align*}
m_\kappa(B)F :=& \left( \partial_1^\kappa +  B^1, \partial_2^\kappa +  B^2, \partial_3^\kappa +  B^3\right) \vec{\times} F \\
\equiv& ([m_\kappa(B)F]^1, [m_\kappa(B)F]^2, [m_\kappa(B)F]^3),
\end{align*}
whereby for $(i,j,k) \in C_3$,
\begin{align*}
[m_\kappa(B)F]^i =& \left(\partial_j^\kappa F^k + \frac{1}{2}dx_j\wedge B^j \times F^k  \right) +  \left(\partial_k^\kappa F^j + \frac{1}{2}dx_k\wedge B^k \times F^j \right).
\end{align*}

Here, it is understood that $[m_\kappa(B)F]^i \in (C^\infty(\bR^4))^{\times^3}\otimes \Lambda^2(\bR^3) \equiv (C^\infty(\bR^4)\otimes \Lambda^2(\bR^3))^{\times^3}$ and
\begin{align*}
\partial_i^\kappa F^j \equiv& (\partial_i^\kappa F_1^j, \partial_i^\kappa F_2^j , \partial_i^\kappa F_3^j),\ F_i^j \in C^\infty(\bR^4)\otimes \Lambda^1(\bR^3), \\
B^i \times F^j \equiv& (B^i_2 F_3^j - B^i_3 F_2^j, B^i_3 F_1^j - B^i_1 F_3^j, B^i_1 F_2^j - B^i_2 F_1^j ),\ F_i^j \in C^\infty(\bR^4)\otimes \Lambda^1(\bR^3).
\end{align*}
See Items \ref{d.lo.2a} and \ref{d.lo.2b} in Definition \ref{d.lo.2}.

Let $F = (F^1, F^2, F^3) \equiv (F^1 \otimes dx_1, F^2 \otimes dx_2, F^3 \otimes dx_3)$, $F^i \in C^\infty(\bR^4)^{\times^3}$.
Write $v_1 = \frac{1}{2}dx_2 \wedge dx_3$, $v_2 = \frac{1}{2}dx_3 \wedge dx_1$ and $v_3 = \frac{1}{2}dx_1 \wedge dx_2$. Observe that as $\kappa \rightarrow \infty$,
\begin{align*}
m_\kappa(B)&F \longrightarrow \\
&\left((B^2 \times F^3 - B^3 \times F^2)\otimes v_1, (B^3 \times F^1 - B^1 \times F^3)\otimes v_2, (B^1 \times F^2 - B^2 \times F^1)\otimes v_3\right),
\end{align*}
which can be identified with $m(B)F\equiv B \vec{\times}F$ using the Hodge star operator.

The inverse is given by \beq m_\kappa(B)^{-1}F = ([m_\kappa(B)^{-1}F]^1, [m_\kappa(B)^{-1}F]^2, [m_\kappa(B)^{-1}F]^3), \label{e.m.2a} \eeq whereby for $(i,j,k) \in C_3$,
\begin{align*}
[m_\kappa(B)^{-1}F]^i =& m_\kappa(B^j)^{-1} F^k\otimes  dx_k - m_{\kappa}(B^k)^{-1} F^j\otimes  dx_j,
\end{align*}
with \beq m_\kappa(B^i)^{-1} F^j(x) := \frac{\kappa}{2}\left[\int_{-\infty}^{x_i} - \int_{x_i}^\infty\right]e^{\kappa(s_i - x_i)[\frac{1}{2}B^i \times]} F^j(x(s_i)) ds_i. \label{e.m.2b} \eeq

Here, $[B^i \times]: C^\infty(\bR^4)^{\times^3} \rightarrow C^\infty(\bR^4)^{\times^3}$ is viewed as a linear operator acting on 3-vectors, hence represented by a $3 \times 3$ matrix. Note that $[m_\kappa(B)^{-1}F]^j \in [C^\infty(\bR^4)\otimes \Lambda^1(\bR^3)]^{\times^3}$. In other words, $m_\kappa(B)^{-1}F$ is a 9-vector, each component taking values in $C^\infty(\bR^4) \otimes \Lambda^1(\bR^3)$.

A direct computation will show that
\begin{align*}
m_\kappa&(B)m_\kappa(B)^{-1} F \\
=& \left(F^1\otimes (dx_1 \wedge dx_2 + dx_3 \wedge dx_1), F^2 \otimes (dx_2 \wedge dx_3 + dx_1 \wedge dx_2), F^3 \otimes (dx_3 \wedge dx_1 + dx_2 \wedge dx_3)\right).
\end{align*}
Each component in the RHS of the preceding equation is in $[C(\bR^4) \otimes \Lambda^2(\bR^3)]^{\times^3}$. Using $\ddag$ defined in Section \ref{s.lo} will identify the RHS of the preceding equation with $F$.

\begin{rem}\label{r.lo.2}
\begin{enumerate}
  \item Explicitly, if $B^i = (b_1, b_2, b_3)$, then the matrix representing $[B^i \times]$ is given by \beq
\left(
  \begin{array}{ccc}
    0 &\ -b_3 &\ b_2 \\
    b_3 &\ 0 &\ -b_1 \\
    -b_2 &\ b_1 &\ 0 \\
  \end{array}
\right). \nonumber \eeq
  \item Refer to Remark \ref{r.lo.1}. When $B \equiv 0$, we have for $(i,j,k) \in C_3$,
\begin{align*}
[m_\kappa(0)^{-1}F]^i =& \kappa[\partial_j^{-1} F^k\otimes  dx_k - \partial_k^{-1} F^j\otimes  dx_j].
\end{align*}
Here, it is understood that $F^i \in C^\infty(\bR^4)^{\times^3}$ and the integral operator $\partial_i^{-1}$ acts on the 3-vector componentwise.
\end{enumerate}
\end{rem}

Let $\nabla \equiv (\partial_1, \partial_2, \partial_3)$. For $i=1,2,3$, we will also define $m_\kappa(B_i)$, by \beq m_\kappa(B_i) F = \left[ \frac{1}{\kappa} \nabla + B_i \right]\times F, \label{e.m.1} \eeq with $F= (F^1, F^2, F^3)\in (C^\infty(\bR^4) \otimes \Lambda^1(\bR^3))^{\times^3}$.

The components take values in $C^\infty(\bR^4) \otimes \Lambda^2(\bR^3)$ and is given by
\begin{align*}
[m_\kappa(B_{\bar{i}})F]^i =& \left(\partial_j^\kappa F^k + dx_j\wedge B_{\bar{i}}^j F^k  \right) +  \left( \partial_k^\kappa F^j + dx_k\wedge B_{\bar{i}}^k F^j \right),\ \bar{i} = 1,2, 3,
\end{align*}
for $(i,j,k) \in C_3$ and $\partial_i^\kappa \equiv \frac{1}{\kappa}\partial_i$.

Let $F = (F^1, F^2, F^3) \equiv (F^1 \otimes dx_1, F^2 \otimes dx_2, F^3 \otimes dx_3)$, $F^i \in C^\infty(\bR^4)$.
Write $w_1 = dx_2 \wedge dx_3$, $w_2 = dx_3 \wedge dx_1$ and $w_3 = dx_1 \wedge dx_2$. Observe that as $\kappa \rightarrow \infty$,
\begin{align*}
m_\kappa(B_i)&F \longrightarrow \\
&\left((B_i^2 F^3 - B_i^3 F^2)\otimes w_1, (B_i^3  F^1 - B_i^1 F^3)\otimes w_2, (B_i^1  F^2 - B_i^2  F^1)\otimes w_3\right),
\end{align*}
which can be identified with $m(B_i)F\equiv B_i \times F$ using the Hodge star operator.

Its inverse is defined by, for $(i,j,k) \in C_3$,
\begin{align}
[m_\kappa(B_{\bar{j}})^{-1}F]^i =& m_\kappa(B_{\bar{j}}^j)^{-1} F^k\otimes dx_k - m_{\kappa}(B_{\bar{j}}^k)^{-1} F^j\otimes dx_j,\ \bar{j} = 1,2,3, \label{e.m.3}
\end{align}
with \beq m_\kappa(B_i^j)^{-1} G(x) := \frac{\kappa}{2}\left[\int_{-\infty}^{x_j} - \int_{x_j}^\infty\right]e^{\kappa(s_j- x_j )B_i^j } G(x(s_j)) ds_j. \label{e.m.3b} \eeq Note that each component of the 3-vector $m_\kappa(B_i)^{-1} F$ is in $C^\infty(\bR^4) \otimes \Lambda^1(\bR^3)$.

A direct computation will show that
\begin{align*}
m_\kappa&(B_i)m_\kappa(B_i)^{-1} F \\
=& (F^1\otimes (dx_1 \wedge dx_2 + dx_3 \wedge dx_1), F^2 \otimes (dx_2 \wedge dx_3 + dx_1 \wedge dx_2), F^3 \otimes (dx_3 \wedge dx_1 + dx_2 \wedge dx_3)).
\end{align*}
Using $\ddag$ will identify the RHS of the preceding equation with $F$.

\begin{rem}\label{r.m.1}
It is interesting to consider $B_i = 0$. Then from Equation (\ref{e.m.1}), we see that $m_\kappa(0) \equiv \frac{1}{\kappa}\nabla \times$. Thus, we have \beq \nabla \times (m_\kappa(0))^{-1}F = \kappa F, \nonumber \eeq after we make the identification between $\Lambda^1(\bR^3)$ and $\Lambda^2(\bR^3)$ using $\ddag$.

Furthermore, for $(i,j,k) \in C_3$,
\begin{align*}
[m_\kappa(0)^{-1}F]^i =& \kappa[\partial_j^{-1} F^k\otimes  dx_k - \partial_k^{-1} F^j\otimes  dx_j].
\end{align*}
\end{rem}
\end{defn}

Refer to Notation \ref{n.n.3}. To make sense of the path integral given by Expression \ref{e.a.4}, we will make the following approximations.
\begin{enumerate}
  \item We will approximate the Dirac-delta function with a Gaussian function $p_\kappa$, defined in Notation \ref{n.n.1}.
  \item We will approximate the operators $m(B)$ and $m(B_i)$ with the operators $m_\kappa(B)$ and $m_\kappa(B_i)$ respectively.
\end{enumerate}

This completes Step \ref{d.cs.r.1c} in Definition \ref{d.cs.r.1}.

\begin{defn}\label{d.he.1}
Define
\begin{align*}
\tilde{V}_\kappa(\{B^j_\mu\}) := \exp\Bigg\{ \sum_{j=1}^3& \left\langle B^j, -\bk\sum_{v=1}^{\underline{n}} \int_0^1  \left\{ \left[ m_\kappa(\langle B, p_\kappa^{\vec{\varrho}^v_{\bar{s}}}\rangle) \right]^{-1}(\varrho_{\bar{s}}^{v,\prime}  p_\kappa^{\vec{\varrho}^v_{\bar{s}}}) \right\}^j\ d\bar{s} \right\rangle\\
&\hspace{1cm}+ \left\langle \tilde{B}^{i}, -\bk\sum_{v=1}^{\underline{n}} \int_0^1 \left[m_\kappa(\langle B_i, p_\kappa^{\vec{\varrho}^v_{\bar{s}}} \rangle) \right]^{-1}(\varrho_{\bar{s}}^{v,\prime} p_\kappa^{\vec{\varrho}^v_{\bar{s}}})\ d\bar{s}\right\rangle
\Bigg\},
\end{align*}
and
\begin{align}
\tilde{W}_\kappa(\{A^k_{\alpha\beta}\})
=&\exp\Bigg[ q\left\langle A_{0j}^i, -\kappa\bk\sum_{u=1}^{\on}\int_0^1 ds\  \partial_0^{-1} p_\kappa^{\vec{y}^u_s}\right\rangle y^{u,\prime}_{i,s}\otimes \hat{E}^{0j}_s \nonumber\\
&\hspace{3.5cm}+ q\left\langle A^k_{\tau(j)}, -\kappa\bk\sum_{u=1}^{\on}\int_0^1 ds\ \partial_0^{-1}p_\kappa^{\vec{y}^u_s}\right\rangle y^{u,\prime}_{k,s} \otimes \hat{E}^{\tau(j)}_s\Bigg]. \label{e.w.1}
\end{align}
\end{defn}

\begin{rem}\label{r.ax.2}
\begin{enumerate}
\item Note that $\tilde{W}_\kappa(\{A^k_{\alpha\beta}\})$ will take values inside $\bigoplus_{n=0}^\infty (\mathfrak{su}(2)\times \mathfrak{su}(2))_{\bC}^{\otimes^n}$.
\item Write $\nu = (\nu^1, \nu^2, \nu^3)$, a 9-vector by \beq \nu^i = \varrho_{i,\bar{s}}^{v,\prime}  p_\kappa^{\vec{\varrho}^v_{\bar{s}}} \bar{E},\ \bar{E} = (1, 1, 1). \nonumber \eeq
Note that $\langle B, p_\kappa^{\vec{\varrho}^v_{\bar{s}}}\rangle $ is a 9-vector, so \beq \left[ m_\kappa(\langle B, p_\kappa^{\vec{\varrho}^v_{\bar{s}}}\rangle) \right]^{-1}(\varrho_{\bar{s}}^{v,\prime}  p_\kappa^{\vec{\varrho}^v_{\bar{s}}}) \equiv \left[ m_\kappa(\langle B, p_\kappa^{\vec{\varrho}^v_{\bar{s}}}\rangle) \right]^{-1}\nu \equiv (F^1, F^2, F^3), \nonumber \eeq and its component, a 3-vector written as \beq F^j \equiv \left\{ \left[ m_\kappa(\langle B, p_\kappa^{\vec{\varrho}^v_{\bar{s}}}\rangle) \right]^{-1}(\varrho_{\bar{s}}^{v,\prime}  p_\kappa^{\vec{\varrho}^v_{\bar{s}}}) \right\}^j. \nonumber \eeq
\end{enumerate}
\end{rem}

Recall in Step \ref{d.cs.r.1d} in Definition \ref{d.cs.r.1}, we need to do some scaling. The line integrals are respectively scaled by $\bk$, so note that in the exponent of $\tilde{V}_\kappa$, we add a factor $\bk$. However, for the exponent in $\tilde{W}_\kappa$, we include the factor $\kappa\bk$ instead. The extra factor $\kappa$ is necessary to obtain non-trivial limits later.

Because $S$ is 2-dimensional submanifold, we need to include a factor of $\bk^2$ to the surface integral. Hence we need to replace $\tilde{A}_S(B, \{\tilde{B}^i\})$ with
\begin{align*}
\tilde{A}_{\kappa,S}(B, \{\tilde{B}^i\}) :=
\bk^2\int_{I^2} \sqrt{\left|\left\langle B^1, p_\kappa^{\vec{\sigma}(\hat{t})} \right\rangle \right|^2 - \sum_{j=1}^3\left|\left\langle \tilde{B}^j, p_\kappa^{\vec{\sigma}(\hat{t})} \right\rangle \cdot e_1 \right|^2 }\ J_{23}(\hat{t})\ d\hat{t}.
\end{align*}

\begin{defn}($\widetilde{\Tr}$)\label{d.tr.1}\\
Define a linear functional $\widetilde{\Tr}$ as follows. Suppose a matrix $A$ is indexed by time $s$ and representation $\rho(i)$, $i=1,\ldots, r$. In other words, $A \equiv A(\rho(i),s)$. Let $\{A(\pi_1,s_1),\ldots, A(\pi_n,s_n)\}$ be a finite set of matrices. Let $S_i = \{j \in \{1,\ldots,n \}: \pi_j = \rho(i)\}$ and write $m_i := |S_i|$. For any $n \geq 1$, define a linear operator,
\begin{align}
\widetilde{\Tr}: & A(\pi_1,s_1) \otimes \cdots \otimes A(\pi_n,s_n) \nonumber \\
\mapsto&  \Tr_{\rho(1)}[A(\rho(1),s_{\beta_1(1)})\cdots \otimes A(\rho(1),s_{\beta_1(m_1)}) ] \cdots \Tr_{\rho(r)} [A(\rho(r),s_{\beta_r(1)}) \cdots A(\rho(r),s_{\beta_r(m_r)})], \nonumber
\end{align}
such that for each $i=1,\ldots, r$, $s_{\beta_i(1)} > s_{\beta_i(2)} > \ldots > s_{\beta_i(m_i)}$ and $\beta_i(j) \in S_i$ for $j=1,\ldots, m_i$.
\end{defn}

Together with the preceding approximation, we replace $\tilde{W}$ and $\tilde{V}$ with $\tilde{W}_\kappa$ and $\tilde{V}_\kappa$ respectively in Definition \ref{d.he.1}. Because $\tilde{V}_\kappa(\{B^j_\mu\})$ and $\tilde{A}_{\kappa,S}(B, \{\tilde{B}^i\})$ are scalars, we can bring them inside the time ordering operator $\mathcal{T}$ and trace, which is the linear functional $\widetilde{\Tr}$. Thus we approximate our path integral in Equation (\ref{e.a.4}) with
\beq
\frac{1}{\hat{Z}_{EH}}\widetilde{\Tr}\int \tilde{V}_\kappa(\{B^j_\mu\}) \tilde{W}_\kappa(\{A^k_{\alpha\beta}\}) \tilde{A}_{\kappa,S}(B, \{\tilde{B}^i\})  e^{i\int_{\bR^4} A_0\cdot  B   -  A \cdot \tilde{B}}  D\Lambda \label{e.a.5}. \eeq This completes Steps \ref{d.cs.r.1c} and \ref{d.cs.r.1d} in Definition \ref{d.cs.r.1}.

\begin{defn}\label{d.n.3}
Given a colored oriented hyperlink $\oL \equiv \{\ol^u, \rho_u\}_{u=1}^{\on}$ and another oriented hyperlink $\uL \equiv \{\ul^v \}_{v=1}^{\un}$ as in Notation \ref{n.h.1}, we obtain a new colored oriented hyperlink $\chi(\oL, \uL)$. Recall we parametrize $\ol^u$ using $\vec{y}^u$ and $\ul^v$ using $\vec{\varrho}^v$ respectively from Notation \ref{n.n.3}.

Define \beq \widetilde{E} = (\hat{E}^{01},  \hat{E}^{02}, \hat{E}^{03}),\ \rho_u^+(\widetilde{E}_s) = (\rho_u^+(\hat{E}_s^{01}),  \rho_u^+(\hat{E}_s^{02}), \rho_u^+(\hat{E}_s^{03})) \nonumber \eeq and \beq  \lambda_\kappa^v = (\lambda_{\kappa}^{v,1} , \lambda_{\kappa}^{v,2} , \lambda_{\kappa}^{v,3} ),\ \
\tilde{\lambda}_{\kappa}^v = (\tilde{\lambda}_{\kappa}^{v,1}, \tilde{\lambda}_{\kappa}^{v,2}, \tilde{\lambda}_{\kappa}^{v,3}),
 \nonumber \eeq which is a 9-vector and 3-vector respectively, with
\begin{align}
\lambda_{\kappa}^{v,j}(\bar{s}) =& \frac{-iq\kappa^2}{2\sqrt{4\pi}}\sum_{u=1}^{\on}\int_0^1 ds\ \left\langle p_\kappa^{\vec{\varrho}^{v}_{\bar{s}}}, \partial_0^{-1} p_\kappa^{\vec{y}^u_s}\right\rangle y^{u,\prime}_{j,s}\otimes \bar{E}\in \bC^{\times^3} ,\ \bar{E} = (1,1,1), \label{e.l.1}\\
\tilde{\lambda}_{\kappa}^{v,j}(\bar{s}) =& \frac{-iq\kappa^2}{2\sqrt{4\pi}}\sum_{u=1}^{\on}\int_0^1 ds\ \left\langle p_\kappa^{\vec{\varrho}^{v}_{\bar{s}}}, \partial_0^{-1} p_\kappa^{\vec{y}^u_s}\right\rangle y^{u,\prime}_{j,s} \in \bC, \label{e.l.2}
\end{align}
for $\bar{s} \in I = [0,1]$.
\end{defn}

\begin{rem}\label{r.x.1}
\begin{enumerate}
  \item We can also identify $\tilde{\lambda}_{\kappa}^{v}(\bar{s}) \otimes \bar{E}$ with $\lambda_\kappa^v(\bar{s})$.
  \item Recall from Section \ref{s.ss}, $\left\langle p_\kappa^{\vec{\varrho}^{v}_{\bar{s}}}, \partial_0^{-1} p_\kappa^{\vec{y}^u_s}\right\rangle$ means $\int_{\bR^4} p_\kappa^{\vec{\varrho}^{v}_{\bar{s}}} \cdot \left[ \partial_0^{-1} p_\kappa^{\vec{y}^u_s} \right]\ d\lambda$ and $\partial_0^{-1} p_\kappa^{\vec{y}^u_s}$ is defined using Equation (\ref{e.d.1}) in Definition \ref{d.lo.1}.
\end{enumerate}

\end{rem}

\begin{defn}\label{d.w.1}
Refer to Definition \ref{d.n.3}. Recall that we have 2 copies of $\mathfrak{su}(2)$ and $\rho_u \equiv (\rho_u^+, \rho_u^-)$, $u=1, \ldots, \on$, from Notation \ref{n.su.1}. Using Notation \ref{n.n.3}, we will write
\begin{align*}
\mathcal{W}^+_\kappa&(q; \ol^u, \uL) = \exp\Bigg\{ \frac{iq}{4}\frac{\kappa^3}{4\pi}\sum_{v=1}^{\underline{n}}\int_{I^2}\ d\hat{s}\  \left\langle  y^{u,\prime}_{j,s}\partial_0^{-1}p_\kappa^{\vec{y}^u_s} \otimes \rho_u^+(\widetilde{E}_s), \left\{ \left[m_\kappa(\lambda_\kappa^v(\bar{s}) )\right]^{-1} (\varrho_{\bar{s}}^{v,\prime} p_\kappa^{\vec{\varrho}^{v}_{\bar{s}}}) \right\}^j\right\rangle \Bigg\}, \\
\mathcal{W}^-_\kappa&(q; \ol^u, \uL) \\
=& \exp\Bigg\{-\frac{iq}{4}\frac{\kappa^3}{4\pi}
\sum_{v=1}^{\underline{n}}\int_{I^2}\ d\hat{s}\ \left\langle y^{u,\prime}_{k,s}\partial_0^{-1} p_\kappa^{\vec{y}^u_s}, \left\{ \left[m_\kappa(\tilde{\lambda}_\kappa^v(\bar{s}) )\right]^{-1}(\varrho_{\bar{s}}^{v,\prime} p_\kappa^{\vec{\varrho}^{v}_{\bar{s}}}) \right\}^k \right\rangle \otimes \sum_{j=1}^3\rho_u^-(\hat{E}_s^{\tau(j)})  \Bigg\}.
\end{align*}
Note that $\mathcal{W}^\pm \in \bigoplus_{n=0}^\infty (\mathfrak{su}(2))_{\bC}^{\otimes^n}$, but  $\mathcal{T}\mathcal{W}^\pm \equiv \mathcal{W}^\pm$. If we multiply the matrices after applying the time ordering operator, then $\mathcal{W}^\pm \in SU(2)$.

We will define \beq Z(\kappa, q; \chi(\oL, \uL)) := \prod_{u=1}^{\on}\left[ \Tr_{\rho_u^+}\  \mathcal{W}^+_\kappa(q; \ol^u, \uL) + \Tr_{\rho_u^-} \mathcal{W}^-_\kappa(q ; \ol^u, \uL) \right]. \label{e.z.1} \eeq See also Remark \ref{r.p.2}.
\end{defn}

\begin{rem}\label{r.p.2}
\begin{enumerate}
  \item Note that $[m_\kappa(\lambda_\kappa^v(\bar{s}))]^{-1}(\varrho_{\bar{s}}^{v,\prime} p_\kappa^{\vec{\varrho}^{v}_{\bar{s}}}) $ is a 9-vector, i.e.
\begin{align*}
[m_\kappa(\lambda_\kappa^v(\bar{s}))]^{-1}(\varrho_{\bar{s}}^{v,\prime} p_\kappa^{\vec{\varrho}^{v}_{\bar{s}}})   \equiv& m_\kappa(\lambda_\kappa^v(\bar{s}))^{-1}\left[  (\varrho_{1,\bar{s}}^{v,\prime}\bar{E}, \varrho_{2,\bar{s}}^{v,\prime}\bar{E}, \varrho_{3,\bar{s}}^{v,\prime}\bar{E})p_\kappa^{\vec{\varrho}^{v}_{\bar{s}}} \right],\ \bar{E} = (1,1,1),
\end{align*}
and its components given explicitly, for $(i,j,k) \in C_3$,
\begin{align*}
\left\{ \left[m_\kappa(\lambda_\kappa^v(\bar{s}) )\right]^{-1} (\varrho_{\bar{s}}^{v,\prime} p_\kappa^{\vec{\varrho}^{v}_{\bar{s}}}) \right\}^i
\equiv& m_\kappa(\lambda_\kappa^{v,j}(\bar{s}))^{-1} \left[ p_\kappa^{\vec{\varrho}^{v}_{\bar{s}}} \bar{E} \right]\varrho_{k,\bar{s}}^{v,\prime} - m_\kappa(\lambda_\kappa^{v,k}(\bar{s}))^{-1}\left[ p_\kappa^{\vec{\varrho}^{v}_{\bar{s}}}\bar{E}\right]\varrho_{j,\bar{s}}^{v,\prime},
\end{align*}
which are defined by Equations (\ref{e.m.2a}) and (\ref{e.m.2b}). Hence we notice that $m_\kappa(\lambda_\kappa^v(\bar{s}) )^{-1}(\varrho_{\bar{s}}^{v,\prime} p_\kappa^{\vec{\varrho}^{v}_{\bar{s}}}) \equiv m_\kappa(0)^{-1}(\varrho_{\bar{s}}^{v,\prime} p_\kappa^{\vec{\varrho}^{v}_{\bar{s}}})$.

\item Similarly, $\left[m_\kappa(\tilde{\lambda}_\kappa^v(\bar{s}) )\right]^{-1}(\varrho_{\bar{s}}^{v,\prime} p_\kappa^{\vec{\varrho}^{v}_{\bar{s}}})$ is a 3-vector and $ \left\{\left[m_\kappa(\tilde{\lambda}_\kappa^v(\bar{s}) )\right]^{-1}(\varrho_{\bar{s}}^{v,\prime} p_\kappa^{\vec{\varrho}^{v}_{\bar{s}}}) \right\}^j $ refers to its $j$-th component and explicitly given, for $(i,j,k) \in C_3$,
\begin{align*}
\left\{ \left[m_\kappa(\tilde{\lambda}_\kappa^v(\bar{s}) )\right]^{-1} (\varrho_{\bar{s}}^{v,\prime} p_\kappa^{\vec{\varrho}^{v}_{\bar{s}}}) \right\}^i
\equiv& m_\kappa(\tilde{\lambda}_\kappa^{v,j}(\bar{s}))^{-1} \left[ p_\kappa^{\vec{\varrho}^{v}_{\bar{s}}} \right]\varrho_{k,\bar{s}}^{v,\prime} - m_\kappa(\lambda_\kappa^{v,k}(\bar{s}))^{-1}\left[ p_\kappa^{\vec{\varrho}^{v}_{\bar{s}}}\right]\varrho_{j,\bar{s}}^{v,\prime},
\end{align*}
which are defined by Equations (\ref{e.m.3}) and (\ref{e.m.3b}).

\item For a given 3-vector $y = (y_1, y_2, y_3) \in \bC^{\times^3}$, the term $\langle \widetilde{E}, y \rangle$ means $y_j\otimes \hat{E}^{0j}$.
Thus if we write \beq \left\{ \left[m_\kappa(\lambda_\kappa^v(\bar{s}) )\right]^{-1} (\varrho_{\bar{s}}^{v,\prime} p_\kappa^{\vec{\varrho}^{v}_{\bar{s}}}) \right\}^j \equiv (x_1^j, x_2^j, x_3^j) \in (C^\infty(\bR^4))^{\times^3}, \nonumber \eeq then
\begin{align*}
\left\langle  y^{u,\prime}_{j,s}\partial_0^{-1}p_\kappa^{\vec{y}^u_s} \otimes \rho_u^+(\widetilde{E}_s), \left\{ \left[m_\kappa(\lambda_\kappa^v(\bar{s}) )\right]^{-1} (\varrho_{\bar{s}}^{v,\prime} p_\kappa^{\vec{\varrho}^{v}_{\bar{s}}}) \right\}^j\right\rangle
&\equiv y^{u,\prime}_{j,s} \langle \partial_0^{-1}p_\kappa^{\vec{y}^u_s}, x_i^j\rangle \otimes \rho_u^+(\hat{E}^{0i}_s).
\end{align*}
Using Remarks \ref{r.lo.2} and \ref{r.x.1}, we leave to the reader to show that it is equal to \beq -\epsilon^{ijk}\left\langle p_\kappa^{\vec{y}^u_s}, p_\kappa^{\vec{\varrho}^{v}_{\bar{s}}} \right\rangle_k y^{u,\prime}_{i,s}\varrho_{j,\bar{s}}^{v,\prime} \otimes \sum_{i=1}^3\rho_u^+(\hat{E}^{0i}_s). \nonumber \eeq Refer to Expression \ref{e.h.1}.
\end{enumerate}

\end{rem}

\begin{notation}\label{n.a.1}
Refer to Notation \ref{n.n.5}. Write
\begin{align*}
a_\kappa =& \bk^{3}\int_{I^2}\Bigg\{  -\sum_{i=1}^3\left[\left\langle q \sum_{u=1}^{\on} \kappa\int_0^1 y_{1,s}^{u,\prime}\partial_0^{-1}p_\kappa^{\vec{y}_s^u} ds, p_\kappa^{\vec{\sigma}(\hat{t})} \right\rangle \otimes \rho_u^+(\hat{E}_s^{0i})\right]^2
\Bigg\}^{1/2}\ J_{23}(\hat{t}) d\hat{t}, \\
b_\kappa =& \bk^{3}\int_{I^2}\Bigg\{  \sum_{i=1}^3\left[\left\langle q \sum_{u=1}^{\on} \kappa\int_0^1 y_{1,s}^{u,\prime}\partial_0^{-1}p_\kappa^{\vec{y}_s^u} ds, p_\kappa^{\vec{\sigma}(\hat{t})} \right\rangle \otimes \rho_u^-(\hat{E}_s^{\tau(i)}) \right]^2
\Bigg\}^{1/2}\  J_{23}(\hat{t}) d\hat{t}.
\end{align*}
And \beq \left\langle \int_0^1 y_{1,s}^{u,\prime}\partial_0^{-1}p_\kappa^{\vec{y}_s^u} ds, p_\kappa^{\vec{\sigma}(\hat{t})} \right\rangle
\equiv \int_0^1ds\ y_{1,s}^{u,\prime}\left\langle  \partial_0^{-1}p_\kappa^{\vec{y}_s^u} , p_\kappa^{\vec{\sigma}(\hat{t})} \right\rangle. \nonumber \eeq See also Section \ref{s.ss}.
\end{notation}

\begin{lem}\label{l.x.2}
Recall $\mathcal{W}_\kappa^\pm$ were defined in Definition \ref{d.w.1} and also refer to Notation \ref{n.a.1}. Apply Step \ref{d.cs.r.1e} in Definition \ref{d.cs.r.1}, the path integral in Expression \ref{e.a.5} is hence computed as
\begin{align}
\widetilde{\Tr}\left(
  \begin{array}{cc}
   a_\kappa \bigotimes_{u=1}^{\on} \mathcal{W}_\kappa^+(q, ; \ol^u, \uL)&\ 0 \\
    0 &\ b_\kappa\bigotimes_{u=1}^{\on} \mathcal{W}_\kappa^-(q; \ol^u, \uL)\\
  \end{array}
\right)
. \label{e.sa.1}
\end{align}
\end{lem}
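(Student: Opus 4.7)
The plan is to exploit the bipartite structure of the Einstein-Hilbert action after the change of variables from Lemma \ref{l.x.1}. The quadratic form $i\int_{\bR^4} A_0 \cdot B - A \cdot \tilde{B}$ splits the product measure $D\Lambda = D[B] D[A_0] D[\tilde{B}] D[A]$ into two independent Chern-Simons pieces: one on the pair $(A_0, B)$ with action $i\langle A_0, B\rangle$, and one on $(A, \tilde{B})$ with action $-i\langle A, \tilde{B}\rangle$. Step \ref{d.cs.r.1e} in Definition \ref{d.cs.r.1} (equivalently Definition \ref{d.cs.1}) instructs us to replace each ``$w_-$'' variable by $i$ times the source for the conjugate ``$w_+$''. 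I would therefore substitute $B$ by $i$ times the coefficient of $A_0$ in $\tilde{W}_\kappa$, and $\tilde{B}$ by the corresponding substitution derived from the coefficient of $A$ (the sign fixed by the $-i$ in the action).

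From Equation (\ref{e.w.1}), the coefficient of $A_{0j}^i$ in $\tilde{W}_\kappa$ lives in the $\hat{E}^{0j}$ subspace, whereas the coefficient of $A^k_{\tau(j)}$ lives in the $\hat{E}^{\tau(j)}$ subspace. Because $\rho_u = (\rho_u^+, \rho_u^-)$ is block diagonal with $\rho_u^+$ annihilating $\hat{E}^{\tau(j)}$ and $\rho_u^-$ annihilating $\hat{E}^{0j}$, the two substitutions produce contributions that live respectively in the $V_u^+$ and $V_u^-$ blocks. This is precisely the origin of the $2\times 2$ block matrix in Expression \ref{e.sa.1}.

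Next I would execute the substitutions explicitly. Inside $\tilde{V}_\kappa$, the operator $m_\kappa(\langle B, p_\kappa^{\vec{\varrho}^v_{\bar s}}\rangle)$ becomes $m_\kappa(\lambda_\kappa^v(\bar s))$ by the very definition (\ref{e.l.1}) of $\lambda_\kappa^v$. By Remark \ref{r.p.2}, because $\lambda_\kappa^v$ points along $\bar{E}$ in frame space, $m_\kappa(\lambda_\kappa^v)^{-1}$ acts on $\varrho^{v,\prime}_{\bar s} p_\kappa^{\vec{\varrho}^v_{\bar s}}$ as the free operator $m_\kappa(0)^{-1}$. Pairing the result against the substituted outer $\langle B^j, \cdot\rangle$ and recognizing the iterated $\partial_0^{-1}$ inner products via Notation \ref{n.k.1}, one recovers the exponent of $\mathcal{W}_\kappa^+(q; \ol^u, \uL)$ tensored with $\rho_u^+(\hat{E}^{0i}_s)$. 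A parallel computation using the coefficient of $A$, now with $\tilde{B}^i$ substituted and $m_\kappa(\tilde{\lambda}_\kappa^v)$ (scalar-valued, matching the 3-vector structure of $\tilde{B}^i$), yields $\mathcal{W}_\kappa^-(q; \ol^u, \uL)$. The area factor $\tilde{A}_{\kappa,S}$ splits in the same way: $|\langle B^1, p_\kappa^{\vec{\sigma}(\hat t)}\rangle|^2$ becomes the operator-valued square on the $\rho^+$ side, while $\sum_j|\langle \tilde{B}^j, p_\kappa^{\vec{\sigma}(\hat t)}\rangle\cdot e_1|^2$ sits on the $\rho^-$ side. After the CS substitutions, the $i^2 = -1$ combines with the signs under the radical of $\tilde{A}_{\kappa,S}$ to produce exactly the $-\sum_i[\cdot\rho_u^+(\hat{E}^{0i})]^2$ appearing in $a_\kappa$ and the $+\sum_i[\cdot\rho_u^-(\hat{E}^{\tau(i)})]^2$ in $b_\kappa$, with the correct $\bk^2 \cdot \bk = \bk^3$ scaling from Step \ref{d.cs.r.1d}.

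Finally, since the $\rho^+$ substitution produces all upper-block contributions and the $\rho^-$ substitution all lower-block contributions, the scalars $a_\kappa$ and $b_\kappa$ multiply the independent tensor products $\bigotimes_u \mathcal{W}_\kappa^+$ and $\bigotimes_u \mathcal{W}_\kappa^-$ respectively, and the outer $\widetilde{\Tr}$ handles the time-ordering and representation traces on each block. The main obstacle is index bookkeeping: one must carefully track the pairing between the $(i,j)$ labels of the 9-vectors $B, \tilde{B}$ and the corresponding labels of the $A_0, A$ coefficients in $\tilde{W}_\kappa$, particularly when moving the $\rho_u^\pm(\hat{E})$ factors past the $\partial_0^{-1}$ kernels to realize the $\langle p_\kappa^{\vec{y}^u_s}, p_\kappa^{\vec{\varrho}^v_{\bar s}}\rangle_k$ groupings of Notation \ref{n.k.1}. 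Remark \ref{r.p.2}(3) supplies the key identity in the $\rho^+$ block, and the analogous identity for the $\rho^-$ block follows by the same calculation with $\tilde{\lambda}_\kappa^v$ in place of $\lambda_\kappa^v$.
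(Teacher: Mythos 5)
Your proposal is correct and follows essentially the same route as the paper: you perform the Step \ref{d.cs.r.1e} substitutions $B^j \mapsto -iq\kappa\bk\sum_u\int y^{u,\prime}_{j,s}\partial_0^{-1}p_\kappa^{\vec{y}^u_s}\otimes\rho_u^+(\hat{E}^{0\cdot}_s)$ and $\tilde{B}^i \mapsto +iq\kappa\bk\cdots\otimes\rho_u^-(\hat{E}^{\tau(i)}_s)$, deduce the induced replacements $m_\kappa(\langle B,p_\kappa\rangle)^{-1}\mapsto m_\kappa(\lambda_\kappa^v)^{-1}$ and $m_\kappa(\langle B_i,p_\kappa\rangle)^{-1}\mapsto m_\kappa(\tilde{\lambda}_\kappa^v)^{-1}$, and track the signs from $i^2=-1$ in $\tilde{A}_{\kappa,S}$ to obtain $a_\kappa$ and $b_\kappa$, exactly as in the paper's proof. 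The only cosmetic difference is that the paper organizes the block structure by first treating $\rho_u=(\rho_u^+,0)$ and $\rho_u=(0,\rho_u^-)$ separately and then combining, whereas you argue directly from the block-diagonal form of $\rho_u=(\rho_u^+,\rho_u^-)$; the content is the same.
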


\begin{proof}
From Equation (\ref{e.w.1}) and according to Step \ref{d.cs.r.1e} in Definition \ref{d.cs.r.1}, we replace $B^j$ and $\tilde{B}^i$ inside the function $\tilde{V}_\kappa(\{B^j_\mu\})$ with
\begin{align}
B^j \longmapsto& -iq\kappa\bk\sum_{u=1}^{\on}\int_0^1 ds\ y^{u,\prime}_{j, s}\partial_0^{-1}p_\kappa^{\vec{y}^u_s} \otimes (\rho_u^+(\hat{E}_s^{01}),  \rho_u^+(\hat{E}_s^{02}), \rho_u^+(\hat{E}_s^{03})), \label{e.b.1}\\
\tilde{B}^i \longmapsto& iq\kappa\bk\sum_{u=1}^{\on}\int_0^1 ds\ y^{u,\prime}_{s}\partial_0^{-1}p_\kappa^{\vec{y}^u_s} \otimes \rho_u^-(\hat{E}_s^{\tau(i)}). \label{e.b.2}
\end{align}

Expression \ref{e.b.1} will give us \beq
m_\kappa(\langle B, p_\kappa^{\vec{\varrho}^v_{\bar{s}}}\rangle)^{-1}\longmapsto m_\kappa(\lambda_\kappa^v(\bar{s}))^{-1} \nonumber \eeq
and it also means that \beq
B_i \longmapsto -iq\kappa\bk\sum_{u=1}^{\on}\int_0^1 ds\ y^{u,\prime}_{s}\partial_0^{-1}p_\kappa^{\vec{y}^u_s} \otimes \rho_u^+(\hat{E}_s^{0i}). \nonumber \eeq

Hence it means we replace
\begin{align*}
m_\kappa(\langle B_i, p_\kappa^{\vec{\varrho}^v_{\bar{s}}}\rangle)^{-1}\longmapsto m_\kappa(\tilde{\lambda}_\kappa^v(\bar{s}))^{-1}.
\end{align*}

Note that $\lambda_\kappa^v(\bar{s})$ and $\tilde{\lambda}_\kappa^v(\bar{s})$ are given by Equations (\ref{e.l.1}) and (\ref{e.l.2}) respectively. Substitute all these inside $\tilde{V}_\kappa$ as defined in Definition \ref{d.he.1} and we will obtain \beq
\left(
  \begin{array}{cc}
   \bigotimes_{u=1}^{\on} \mathcal{W}_\kappa^+(q, ; \ol^u, \uL) &\ 0 \\
    0 &\ \bigotimes_{u=1}^{\on}\mathcal{W}_\kappa^-(q, ; \ol^u, \uL)  \\
  \end{array}
\right). \nonumber \eeq

Making the substitution given by Equation (\ref{e.b.1}), we see that
\begin{align*}
\left|\left\langle B^1, p_\kappa^{\vec{\sigma}(\hat{t})} \right\rangle \right|^2 &\longmapsto -\sum_{i=1}^3\left| \left\langle q\kappa\bk\sum_{u=1}^{\on}\int_0^1 ds\ y^{u,\prime}_{1, s}\partial_0^{-1}p_\kappa^{\vec{y}^u_s}, p_\kappa^{\vec{\sigma}(\hat{t})}\right\rangle \otimes \rho_{u}^+(\hat{E}^{0i})\right|^2 \\
&= -q^2\kappa^2\bk^2\sum_{u,\bar{u}=1}^{\on}\int_{I^2} d\hat{s}
\left\langle y^{u,\prime}_{1, s}\partial_0^{-1}p_\kappa^{\vec{y}^u_s}, p_\kappa^{\vec{\sigma}(\hat{t})}\right\rangle\
\left\langle y^{\bar{u},\prime}_{1, \bar{s}}\partial_0^{-1}p_\kappa^{\vec{y}^{\bar{u}}_{\bar{s}}}, p_\kappa^{\vec{\sigma}(\hat{t})}\right\rangle\
\otimes  A(u,s; \bar{u}, \bar{s}),
\end{align*}
where $A(u,s; \bar{u}, \bar{s}) := \sum_{i=1}^3\rho_{u}^+(\hat{E}_{s}^{0i})\otimes \rho_{\bar{u}}^+(\hat{E}_{\bar{s}}^{0i})$.

Making the substitution given by Equation (\ref{e.b.2}), we see that
\begin{align*}
-\left|\left\langle \tilde{B}^j, p_\kappa^{\vec{\sigma}(\hat{t})} \right\rangle \cdot e_1 \right|^2 &\longmapsto \left| \left\langle q\kappa\bk\sum_{u=1}^{\on}\int_0^1 ds\ y^{u,\prime}_{1,s}\partial_0^{-1}p_\kappa^{\vec{y}^u_s}, p_\kappa^{\vec{\sigma}(\hat{t})} \right\rangle \otimes \rho_{u}^-(\hat{E}_s^{\tau(j)})\right|^2 \\
&= q^2\kappa^2\bk^2 \sum_{u,\bar{u}=1}^{\on}\int_{I^2} d\hat{s}
\left\langle y^{u,\prime}_{1, s}\partial_0^{-1}p_\kappa^{\vec{y}^u_s}, p_\kappa^{\vec{\sigma}(\hat{t})}\right\rangle\
\left\langle y^{\bar{u},\prime}_{1, \bar{s}}\partial_0^{-1}p_\kappa^{\vec{y}^{\bar{u}}_{\bar{s}}}, p_\kappa^{\vec{\sigma}(\hat{t})}\right\rangle\
\otimes  B^j(u,s; \bar{u}, \bar{s}),
\end{align*}
where $B^j(u,s; \bar{u}, \bar{s}) := \rho_{u}^+(\hat{E}_{s}^{\tau(j)})\otimes \rho_{\bar{u}}^+(\hat{E}_{\bar{s}}^{\tau(j)})$.


Suppose our representation is given by $\rho_u \equiv (\rho_u^+, 0)$.
Apply Step \ref{d.cs.r.1e} in Definition \ref{d.cs.1}, substitute inside Expression \ref{e.a.5} and with Notation \ref{n.a.1}, we will obtain
\begin{align}
\widetilde{\Tr}&\ a_\kappa\bigotimes_{u=1}^{n} \exp\Bigg\{ \frac{iq}{4}\frac{\kappa^3}{4\pi}\sum_{v=1}^{\underline{n}}\int_{I^2}\ d\hat{s}\  \left\langle  y^{u,\prime}_{j,s}\partial_0^{-1}p_\kappa^{\vec{y}^u_s} \otimes \rho_u^+(\widetilde{E}_s), \left\{ \left[m_\kappa(\lambda_\kappa^v(\bar{s}) )\right]^{-1} (\varrho_{\bar{s}}^{v,\prime} p_\kappa^{\vec{\varrho}^{v}_{\bar{s}}}) \right\}^j\right\rangle  \Bigg\} \nonumber \\
\equiv& \widetilde{\Tr}\ \left[a_\kappa \bigotimes_{u=1}^{n}\mathcal{W}_\kappa^+(q, ; \ol^u, \uL)\right], \label{e.a.6a}
\end{align}
which follows from Definition \ref{d.w.1}.

Similarly, suppose our representation is given by $\rho_u \equiv (0, \rho_u^-)$.
Apply Step \ref{d.cs.r.1e} in Definition \ref{d.cs.1}, substitute inside Expression \ref{e.a.5} and with Notation \ref{n.a.1}, we will obtain
\begin{align}
\widetilde{\Tr}&\ b_\kappa\bigotimes_{u=1}^{n} \exp\Bigg\{ -\frac{iq}{4}\frac{\kappa^3}{4\pi}\sum_{v=1}^{\underline{n}}\int_{I^2}\ d\hat{s}\ \left\langle y^{u,\prime}_{k,s}\partial_0^{-1} p_\kappa^{\vec{y}^u_s}, \left\{ \left[m_\kappa(\tilde{\lambda}_\kappa^v(\bar{s}) )\right]^{-1}(\varrho_{\bar{s}}^{v,\prime} p_\kappa^{\vec{\varrho}^{v}_{\bar{s}}}) \right\}^k \right\rangle  \otimes \sum_{j=1}^3\rho_u^-(\hat{E}_s^{\tau(j)}) \Bigg\} \nonumber \\
\equiv& \widetilde{\Tr}\ \left[ b_\kappa \bigotimes_{u=1}^{n} \mathcal{W}_\kappa^-(q, ; \ol^u, \uL)\right], \label{e.a.6b}
\end{align}
which follows from Definition \ref{d.w.1}.

In the general case $\rho_u \equiv (\rho_u^+, \rho_u^-)$, Expressions \ref{e.a.6a} and \ref{e.a.6b} will give us our desired result.


\end{proof}

\begin{rem}
\begin{enumerate}
  \item When $S = \emptyset$, then Expression \ref{e.sa.1} reduces to $Z(\kappa, q; \chi(\oL, \uL) )$ as defined in Equation (\ref{e.z.1}).
  \item The term \beq Z(q; \chi(\oL, \uL) ):= \lim_{\kappa \rightarrow \infty}Z(\kappa, q; \chi(\oL, \uL) )\nonumber \eeq will be referred to as the Wilson Loop observable for a colored hyperlink $\chi(\oL, \uL)$.
\end{enumerate}
\end{rem}

Unfortunately, $a_\kappa$ and $b_\kappa$ are not defined, if the components of the hyperlink are colored differently. The reason is that we do not know how to take the square root of $\sum_{i=1}^3 \rho_u^+(\hat{E}^{0i})\otimes \rho_{\bar{u}}^+(\hat{E}^{0i})$ and $\sum_{i=1}^3 \rho_u^+(\hat{E}^{\tau(i)})\otimes \rho_{\bar{u}}^+(\hat{E}^{\tau(i)})$, if $\rho_u^\pm \neq \rho_{\bar{u}}^\pm$.

What if all the representations are the same? Unfortunately, we will still have problems defining $a_\kappa$ and $b_\kappa$. The reason is the square root function is not analytic at 0, so we do not know how to apply the time ordering operator to the square root.

In a sequel \cite{EH-Lim03} to this article, we will show that the area path integral can be computed from the intersection points between the link $\pi_0(\oL)$ and the surface $S$. At any such intersection point, also termed as piercing, we see that it involves only one component $\ol^u$ inside $\oL$ and only one point in $\ol^u$, so the sum of the tensor products inside $a_\kappa$ and $b_\kappa$ reduce down to
\begin{align*}
\sum_{i=1}^3 \rho_u^+(E_s^{0i})\rho_u^+(E_s^{0i}) \equiv -\xi_{\rho_u^+}I_{\rho_u^+},\ {\rm and} \
\sum_{i=1}^3 \rho_u^-(E_s^{\tau(i)})\rho_u^-(E_s^{\tau(i)}) \equiv -\xi_{\rho_u^-}I_{\rho_u^-},
\end{align*}
$I_{\rho_u^\pm}$ are respectively identity matrices. It is now clear how to take the square root.

If the representations are all the same, i.e. $\rho_u^\pm \equiv \rho^\pm$, then we replace both $a_\kappa$ and $b_\kappa$ with
\begin{align*}
\bar{a}_\kappa =& \bk^{3}\int_{I^2}\Bigg\{  \left[\left\langle q \sum_{u=1}^{\on} \kappa\int_0^1 y_{1,s}^{u,\prime}\partial_0^{-1}p_\kappa^{\vec{y}_s^u} ds, p_\kappa^{\vec{\sigma}(\hat{t})} \right\rangle \right]^2 \otimes\xi_{\rho_u^+}I_{\rho_u^+}
\Bigg\}^{1/2}\ J_{23}(\hat{t}) d\hat{t}, \\
\bar{b}_\kappa =& \bk^{3}\int_{I^2}\Bigg\{  -\left[\left\langle q \sum_{u=1}^{\on} \kappa\int_0^1 y_{1,s}^{u,\prime}\partial_0^{-1}p_\kappa^{\vec{y}_s^u} ds, p_\kappa^{\vec{\sigma}(\hat{t})} \right\rangle \right]^2\otimes\xi_{\rho_u^-}I_{\rho_u^-}
\Bigg\}^{1/2}\  J_{23}(\hat{t}) d\hat{t}
\end{align*}
respectively, in Expression \ref{e.sa.1}.


Hence Expression \ref{e.sa.1}, upon further simplification, gives us
\begin{align}
\prod_{\bar{u}=1}^{\on}\Bigg\{& \Bigg[  \bk^{3}\int_{I^2}\Bigg|\left\langle q \sum_{u =1}^{\on} \kappa\int_0^1 y_{1,s}^{u,\prime} \partial_0^{-1}p_\kappa^{\vec{y}_s^u} ds, p_\kappa^{\vec{\sigma}(\hat{t})} \right\rangle\ \sqrt{\xi_{\rho_u^+}}
\Bigg|\ J_{23}(\hat{t}) d\hat{t} \Bigg]^{1/\on} \Tr_{\rho_{\bar{u}}^+}\ \mathcal{W}_\kappa^+(q; \ol^{\bar{u}}, \uL) \nonumber\\
+& \Bigg[
i\bk^{3}\int_{I^2}\Bigg|\left\langle q \sum_{u =1}^{\on} \kappa\int_0^1 y_{1,s}^{u,\prime} \partial_0^{-1}p_\kappa^{\vec{y}_s^u} ds, p_\kappa^{\vec{\sigma}(\hat{t})} \right\rangle\ \sqrt{\xi_{\rho_u^-}}
\Bigg|\ J_{23}(\hat{t}) d\hat{t} \Bigg]^{1/\on} \Tr_{\rho_{\bar{u}}^-}\ \mathcal{W}_\kappa^-(q; \ol^{\bar{u}}, \uL) \Bigg\}
. \label{e.a.1}
\end{align}

We will now define the path integral in Expression \ref{e.a.3} as the limit of Expression \ref{e.a.1} as $\kappa$ goes to infinity. We can further simplify this expression using the following lemma.
\begin{lem}\label{l.k.1}
We have $\kappa\lambda_{\kappa}^v(\bar{s}) \rightarrow 0$ and $\kappa\tilde{\lambda}_{\kappa}^v(\bar{s}) \rightarrow 0$ as $\kappa \rightarrow \infty$.
\end{lem}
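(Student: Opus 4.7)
The plan is to exploit the product structure of the four-dimensional Gaussian $p_\kappa^{\vec x}$ together with the spatial disjointness guaranteed by the time-like hypothesis on $\chi(\oL,\uL)$. The key observation is that $p_\kappa^{\vec x}(z)=\prod_{a=0}^{3}q_\kappa^{x_a}(z_a)$, and since $\partial_0^{-1}$ acts only on the $x_0$-variable, the inner product factorises as
\[
\bigl\langle p_\kappa^{\vec{\varrho}^{v}_{\bar s}},\,\partial_0^{-1} p_\kappa^{\vec{y}^u_s}\bigr\rangle
=\bigl\langle q_\kappa^{\varrho^v_{0,\bar s}},\,\partial_0^{-1} q_\kappa^{y^u_{0,s}}\bigr\rangle\cdot\prod_{i=1}^{3}\bigl\langle q_\kappa^{y^u_{i,s}},\,q_\kappa^{\varrho^v_{i,\bar s}}\bigr\rangle.
\]
A direct Gaussian computation (completing the square) gives $\bigl\langle q_\kappa^{a},\,q_\kappa^{b}\bigr\rangle=e^{-\kappa^{2}(a-b)^{2}/8}$, so the spatial product equals $\exp\!\bigl(-\tfrac{\kappa^{2}}{8}|y^u_s-\varrho^v_{\bar s}|^{2}\bigr)$.

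Next I would invoke the time-like hypothesis from Notation \ref{n.h.1}: the first bullet of that definition forces the spatial projections $\pi_0(\ol^u)$ and $\pi_0(\ul^v)$ to be disjoint subsets of $\bR^3$ for every $u\in\{1,\dots,\on\}$ and $v\in\{1,\dots,\un\}$, because $\ol^u$ and $\ul^v$ are distinct components of $\chi(\oL,\uL)$. Since each loop is compact and there are only finitely many components, a routine compactness argument produces a constant $\delta>0$ with
\[
|y^u_s-\varrho^v_{\bar s}|\ge\delta\qquad\text{for all }u,v,s,\bar s,
\]
so that the spatial factor is uniformly bounded by $e^{-\kappa^{2}\delta^{2}/8}$.

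The temporal factor is handled by the crude pointwise estimate $\bigl|\partial_0^{-1} q_\kappa^{y_0}(t)\bigr|\le\tfrac12\|q_\kappa^{y_0}\|_{L^1}=O(\kappa^{-1/2})$, and combining with $\|q_\kappa^{\varrho_0}\|_{L^1}=O(\kappa^{-1/2})$ yields $\bigl|\bigl\langle q_\kappa^{\varrho^v_{0,\bar s}},\,\partial_0^{-1} q_\kappa^{y^u_{0,s}}\bigr\rangle\bigr|=O(\kappa^{-1})$. Feeding these bounds into the definition (\ref{e.l.1}) of $\lambda_\kappa^{v,j}$ and using the uniform bound on $|y^{u,\prime}_{j,s}|$ coming from smoothness of the parametrisation, I obtain
\[
\bigl|\kappa\,\lambda_\kappa^{v,j}(\bar s)\bigr|\;\le\;C\,\kappa\cdot\kappa^{2}\cdot\kappa^{-1}\cdot e^{-\kappa^{2}\delta^{2}/8}\;=\;O\!\bigl(\kappa^{2}e^{-\kappa^{2}\delta^{2}/8}\bigr)\;\longrightarrow\;0.
\]
The identical argument applies verbatim to $\kappa\,\tilde\lambda_\kappa^{v,j}(\bar s)$, whose definition in (\ref{e.l.2}) differs only by the absence of the $\otimes\bar E$ tensor factor.

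The main obstacle is the first step: checking that the time-like condition on $\chi(\oL,\uL)$ is exactly what is required to produce the uniform spatial separation $\delta>0$, rather than merely pointwise separation that might degenerate as $s,\bar s$ vary. Once that is noted, the Gaussian suppression $e^{-\kappa^{2}\delta^{2}/8}$ overwhelms any polynomial factor in $\kappa$, so the convergence to zero is in fact exponentially strong and no refined estimate of the $\partial_0^{-1}$ factor is needed.
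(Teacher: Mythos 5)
Your proof is correct and follows essentially the same route as the paper's: the paper simply records the factorisation $\kappa^3\langle p_\kappa^{\vec{\varrho}^v_{\bar s}},\partial_0^{-1}p_\kappa^{\vec y^u_s}\rangle=\kappa^2\langle p_\kappa^{\varrho^v_{\bar s}},p_\kappa^{y^u_s}\rangle\cdot\kappa\langle q_\kappa^{\varrho^v_{0,\bar s}},\partial_0^{-1}q_\kappa^{y^u_{0,s}}\rangle$ and appeals to Lemma \ref{l.l.5}, leaving the details to the reader. You have supplied exactly those omitted details -- the $O(\kappa^{-1})$ bound on the temporal factor and, crucially, the uniform spatial separation $\delta>0$ coming from the time-like condition plus compactness, which is what makes the Gaussian factor $e^{-\kappa^2\delta^2/8}$ beat the polynomial prefactor.
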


\begin{proof}
Note that \beq \kappa^3\left\langle p_\kappa^{\vec{\varrho}^{v}_{\bar{s}}}, \partial_0^{-1} (p_\kappa^{\vec{y}^u_s})\right\rangle = \kappa^2\left\langle p_\kappa^{\varrho^v_{\bar{s}}}, p_\kappa^{y^u_s} \right\rangle \cdot \kappa\left\langle q_\kappa^{\varrho^v_{0,\bar{s}}}, \partial_0^{-1}q_\kappa^{y^u_{0,s}} \right\rangle. \nonumber \eeq The proof now follows directly from Lemma \ref{l.l.5}, the details to be left to the reader.
\end{proof}

\begin{cor}\label{c.x.1}
Refer to Definition \ref{d.lo.1}. Define $\hat{\mathcal{W}}_\kappa^\pm(q; \ol^u, \uL)$ as
\begin{align}
\hat{\mathcal{W}}_\kappa^\pm(q; \ol^u, \uL) := \exp\left[ \mp\frac{iq}{4}\frac{\kappa^3}{4\pi}\sum_{v=1}^{\underline{n}}\int_{I^2}\ d\hat{s}\ \epsilon^{ijk}\left\langle  p_\kappa^{\vec{y}^u_s}, p_\kappa^{\vec{\varrho}^v_{\bar{s}}}\right\rangle_k  y^{u,\prime}_{i,s}\varrho^{v,\prime}_{j,\bar{s}} \otimes \mathcal{F}^\pm\right], \label{e.h.5}
\end{align}
whereby $\mathcal{F}^\pm$ was defined in Notation \ref{n.su.1}. We thus have \beq \lim_{\kappa \rightarrow \infty}\Tr_{\rho_u^\pm}\ \mathcal{W}_\kappa^\pm(q; \ol^u, \uL) = \lim_{\kappa \rightarrow \infty}\Tr_{\rho_u^\pm}\ \hat{\mathcal{W}}_\kappa^\pm(q; \ol^u, \uL). \nonumber \eeq
\end{cor}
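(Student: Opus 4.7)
The plan is to compare the integrand inside the exponent of $\mathcal{W}_\kappa^\pm$ with that of $\hat{\mathcal{W}}_\kappa^\pm$ and argue that they agree in the large-$\kappa$ limit. The key input is Lemma \ref{l.k.1}: having $\kappa\lambda_\kappa^v(\bar{s})\to 0$ and $\kappa\tilde{\lambda}_\kappa^v(\bar{s})\to 0$ will let me replace the cumbersome operators $m_\kappa(\lambda_\kappa^v)^{-1}$ and $m_\kappa(\tilde{\lambda}_\kappa^v)^{-1}$ by $m_\kappa(0)^{-1}\equiv\kappa\,\partial^{-1}$ (see Remark \ref{r.lo.2}, item 2), after which the inner products collapse to those already recorded in Remark \ref{r.p.2}, item 3.

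For the $+$ case, the substitution $m_\kappa(\lambda_\kappa^v)^{-1}\mapsto m_\kappa(0)^{-1}$ is in fact an exact identity stated in Remark \ref{r.p.2}, item 1: each spatial slot of $\lambda_\kappa^v(\bar{s})$ is proportional to $\bar{E}=(1,1,1)$, and since $\bar{E}\times(\alpha\bar{E})=0$, the operator $\lambda_\kappa^{v,j}(\bar{s})\,\times$ annihilates the input $\varrho_{\bar{s}}^{v,\prime}p_\kappa^{\vec{\varrho}^v_{\bar{s}}}\otimes\bar{E}$. Hence the exponential kernel in Equation (\ref{e.m.2b}) reduces to the identity and the substitution is valid for every $\kappa$. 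For the $-$ case, $\tilde{\lambda}_\kappa^v(\bar{s})$ is a genuine $\bC$-valued 3-vector and the corresponding kernel $e^{\kappa(s_j-x_j)\tilde{\lambda}_\kappa^{v,j}(\bar{s})}=e^{(s_j-x_j)(\kappa\tilde{\lambda}_\kappa^{v,j}(\bar{s}))}$ only tends pointwise to $1$. Lemma \ref{l.k.1} supplies the pointwise decay, and the Gaussian decay of $p_\kappa^{\vec{\varrho}^v_{\bar{s}}}$ inside the integral defining $m_\kappa(\tilde{\lambda}_\kappa^{v,j})^{-1}$, paired against $\partial_0^{-1}p_\kappa^{\vec{y}^u_s}$, provides the integrable dominant required for a dominated-convergence argument, so in the limit $m_\kappa(\tilde{\lambda}_\kappa^v)^{-1}$ may be replaced by $m_\kappa(0)^{-1}$ inside the inner product.

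Once both operators have been collapsed to $m_\kappa(0)^{-1}$, Remark \ref{r.p.2}, item 3 (for the $+$ case) together with the analogous computation (for the $-$ case) shows that the inner products inside the exponents of $\mathcal{W}_\kappa^+$ and $\mathcal{W}_\kappa^-$ reduce respectively to
\beq -\epsilon^{ijk}\bigl\langle p_\kappa^{\vec{y}^u_s}, p_\kappa^{\vec{\varrho}^v_{\bar{s}}}\bigr\rangle_k y^{u,\prime}_{i,s}\varrho^{v,\prime}_{j,\bar{s}}\otimes\sum_{i=1}^3\rho_u^+(\hat{E}^{0i}_s) \nonumber \eeq
and the corresponding expression with $\sum_{j=1}^3\rho_u^-(\hat{E}^{\tau(j)}_s)$ replacing the $\rho_u^+$ sum. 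By Notation \ref{n.su.1}, these sums are $\rho_u^+(\mathcal{F}^+)$ and $\rho_u^-(\mathcal{F}^-)$, fixed matrices independent of $s$; consequently the time-ordering operator $\mathcal{T}$ acts trivially, since each factor in the expansion of the exponential is the same matrix. Finally, the explicit sign $-\epsilon^{ijk}$ combines with the prefactor $\pm iq/4$ in front of $\mathcal{W}_\kappa^\pm$ to produce the $\mp iq/4$ that defines $\hat{\mathcal{W}}_\kappa^\pm$ in Equation (\ref{e.h.5}), so the integrands match and the traces agree in the limit.

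The main obstacle is the $-$ case: the replacement of $m_\kappa(\tilde{\lambda}_\kappa^v)^{-1}$ by $m_\kappa(0)^{-1}$ must be justified after it is wrapped inside the $I^2$-integral, the exponential, and the trace, with $\kappa$ varying throughout. What makes this tractable is that, after the replacement, both the original and simplified exponents are sums of Gaussian double-integrals $\langle p_\kappa^{\vec{y}^u_s},p_\kappa^{\vec{\varrho}^v_{\bar{s}}}\rangle_k$ that are uniformly bounded in $\kappa$, while the discrepancy introduced by $e^{(s_j-x_j)\kappa\tilde{\lambda}_\kappa^{v,j}}-1$ carries a prefactor of order $\kappa\tilde{\lambda}_\kappa^{v,j}(\bar{s})\to 0$ uniformly in $\bar{s}\in I$ by Lemma \ref{l.k.1}. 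Propagating this error estimate through the finite-degree polynomial expansion of the exponential inside $\Tr_{\rho_u^-}$ is the delicate step, and it relies crucially on the rate of decay in Lemma \ref{l.k.1} being strong enough to absorb the $\kappa^3$ prefactor that already sits in front of $\mathcal{W}_\kappa^\pm$.
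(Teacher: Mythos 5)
Your proposal follows essentially the same route as the paper: invoke Lemma \ref{l.k.1} (together with Remarks \ref{r.lo.2}, \ref{r.m.1} and \ref{r.p.2}) to replace $m_\kappa(\lambda_\kappa^v)^{-1}$ and $m_\kappa(\tilde{\lambda}_\kappa^v)^{-1}$ by $m_\kappa(0)^{-1}=\kappa\,\partial^{-1}$ in the limit, then rewrite the resulting pairing compactly as $\mp\frac{iq}{4}\frac{\kappa^3}{4\pi}\epsilon^{ijk}\langle p_\kappa^{\vec{y}^u_s},p_\kappa^{\vec{\varrho}^v_{\bar{s}}}\rangle_k\,y^{u,\prime}_{i,s}\varrho^{v,\prime}_{j,\bar{s}}\otimes\mathcal{F}^\pm$. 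Your additional observations — that the $+$ case substitution is exact because $\bar{E}\times(\alpha\bar{E})=0$, and that the $-$ case needs a dominated-convergence step — only make explicit what the paper leaves implicit in Remark \ref{r.p.2}, so the argument is the same.
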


\begin{proof}
From Lemma \ref{l.k.1}, together with Remarks \ref{r.lo.2} and \ref{r.m.1}, we see that to compute the limit as $\kappa$ goes to infinity, for the exponent in $\mathcal{W}_\kappa^\pm(q; \ol^u, \uL)$, it suffices to compute the limit as $\kappa$ goes to infinity for
\begin{align}
\mp\frac{iq}{4}\frac{\kappa^3}{4\pi}\sum_{v=1}^{\underline{n}}\int_{I^2}\ d\hat{s}\ \Bigg\{ &\left\langle  y^{u,\prime}_{1,s}\partial_0^{-1}p_\kappa^{\vec{y}^u_s}, \kappa\left[\varrho^{v,\prime}_{3,\bar{s}}\partial_2^{-1} - \varrho^{v,\prime}_{2,\bar{s}}\partial_3^{-1}\right]p_\kappa^{\vec{\varrho}^{v}_{\bar{s}}}\right\rangle \nonumber\\
+& \left\langle y^{u,\prime}_{2,s} \partial_0^{-1}p_\kappa^{\vec{y}^u_s}, \kappa\left[\varrho^{v,\prime}_{1,\bar{s}}\partial_3^{-1} - \varrho^{v,\prime}_{3,\bar{s}}\partial_1^{-1}\right]p_\kappa^{\vec{\varrho}^{v}_{\bar{s}}}\right\rangle
\nonumber\\
+&\left\langle y^{u,\prime}_{3,s} \partial_0^{-1}p_\kappa^{\vec{y}^u_s}, \kappa\left[\varrho^{v,\prime}_{2,\bar{s}}\partial_1^{-1} - \varrho^{v,\prime}_{1,\bar{s}}\partial_2^{-1}\right]p_\kappa^{\vec{\varrho}^{v}_{\bar{s}}}\right\rangle \label{e.h.1}
\Bigg\} \otimes \mathcal{F}^\pm.
\end{align}

Using Notations \ref{n.n.3} and \ref{n.k.1} applied to Expression \ref{e.h.1}, after some simple manipulation, Expression \ref{e.h.1} can be written compactly as \beq \mp\frac{iq}{4}\frac{\kappa^3}{4\pi}\sum_{v=1}^{\underline{n}}\int_{I^2}\ d\hat{s}\ \epsilon^{ijk}\left\langle  p_\kappa^{\vec{y}^u_s}, p_\kappa^{\vec{\varrho}^v_{\bar{s}}}\right\rangle_k  y^{u,\prime}_{i,s}\varrho^{v,\prime}_{j,\bar{s}} \otimes \mathcal{F}^\pm. \nonumber \eeq This completes the proof.
\end{proof}

With this corollary, we can define the area path integral by replacing $\mathcal{W}_\kappa^\pm$ with $\hat{\mathcal{W}}_\kappa^\pm$ in Expression \ref{e.a.1}, and taking the limit of this new expression as $\kappa$ goes to infinity. This is for the case when the representations are all the same.

\begin{notation}\label{n.n.4}
Suppose we have a list of irreducible representations of $\mathfrak{su}(2)\times \mathfrak{su}(2)$, $\{\rho_1, \ldots, \rho_{\on}\}$ and there are $\bar{m}$ distinct representations, labeled as $\{\gamma_1, \ldots, \gamma_{\bar{m}} \}$, arranged in any order. For representation $\gamma_u$, let $\Gamma_u$ denote the set of integers in $\{1,2, \ldots, \on\}$ such that $\rho_v = \gamma_u$, $v \in \Gamma_u$.
\end{notation}

If the representations are not the same, we can define the area path integral in the following manner.

\begin{defn}(Area Path Integral)\label{d.api}\\
Write the surface $S$ into a disjoint union of smaller surfaces \beq S_1,\ S_2,\ \ldots,\ S_{\bar{m}}, \nonumber \eeq $\bar{m}$ as defined in Notation \ref{n.n.4}. In other words, $S_v$ will be the (possibly disconnected) surface whereby those hyperlinks colored with the same representation $\gamma_v$ pierce it.
Let $I^2 = \bigcup_{v=1}^{\bar{m}}I_v^2$ such that $\sigma: I_v^2 \rightarrow S_v$ is a parametrization of $S_v$.

Hence we can write the path integral in Expression \ref{e.a.5} as
\begin{align*}
\frac{1}{\hat{Z}_{EH}}&\widetilde{\Tr}\int \tilde{V}_\kappa(\{B^j_\mu\}) \tilde{W}_\kappa(\{A^k_{\alpha\beta}\})\tilde{A}_{\kappa,S}(B, \{\tilde{B}^i\}) \cdot  e^{i\int_{\bR^4} A_0\cdot  B   -  A \cdot \tilde{B}}  D\Lambda \\
=&
\sum_{v=1}^{\bar{m}}\frac{1}{\hat{Z}_{EH}}\widetilde{\Tr}\int \tilde{V}_\kappa(\{B^j_\mu\}) \tilde{W}_\kappa(\{A^k_{\alpha\beta}\})\tilde{A}_{\kappa,S}(B, \{\tilde{B}^i\}) \cdot  e^{i\int_{\bR^4} A_0\cdot  B   -  A \cdot \tilde{B}}  D\Lambda.
\end{align*}

The path integral in Expression \ref{e.a.3} is now define as the limit as $\kappa$ goes to infinity, of the expression
\begin{align}
\prod_{\bar{u}=1}^{\on}\Bigg\{& \Bigg[ \sum_{v=1}^{\bar{m}} \bk^{3}\int_{I_v^2}\Bigg|\left\langle q \sum_{u \in \Gamma_v} \kappa\int_0^1 y_{1,s}^{u,\prime} \partial_0^{-1}p_\kappa^{\vec{y}_s^u} ds, p_\kappa^{\vec{\sigma}(\hat{t})} \right\rangle\ \sqrt{\xi_{\rho_u^+}}
\Bigg|\ J_{23}(\hat{t}) d\hat{t} \Bigg]^{1/\on} \Tr_{\rho_{\bar{u}}^+}\ \hat{\mathcal{W}}_\kappa^+(q; \ol^{\bar{u}}, \uL) \nonumber\\
+& \Bigg[
i\sum_{v=1}^{\bar{m}}\bk^{3}\int_{I_v^2}\Bigg|\left\langle q \sum_{u \in \Gamma_v} \kappa\int_0^1 y_{1,s}^{u,\prime} \partial_0^{-1}p_\kappa^{\vec{y}_s^u} ds, p_\kappa^{\vec{\sigma}(\hat{t})} \right\rangle\ \sqrt{\xi_{\rho_u^-}}
\Bigg|\ J_{23}(\hat{t}) d\hat{t} \Bigg]^{1/\on} \Tr_{\rho_{\bar{u}}^-}\ \hat{\mathcal{W}}_\kappa^-(q; \ol^{\bar{u}}, \uL) \Bigg\}
. \nonumber
\end{align}
\end{defn}

\begin{rem}
The above expression is dependent on how we partition the surface $S$ into $\bigcup_{v=1}^{\bar{m}}S_v$. But its limit as $\kappa$ goes to infinity will be shown to be independent of this partition in a sequel to this article.
\end{rem}

\section{Volume Path Integral}\label{s.vo}

Fix a closed and bounded 3-manifold $R \subset \bR^3$, referred as a compact region from now on, possibly disconnected with a finite number of components. We identify it as $\{0\} \times R \subset \{0\} \times \bR^3$ inside $\bR \times \bR^3$. Furthermore, $\oL$ is disjoint from $R$.

\begin{notation}\label{n.r.1}
Refer to Notation \ref{n.n.3}. Let $\rho: I^3 \rightarrow \bR^3$ be any parametrization of $R$. Let $|J_\rho|(r)$ denote the determinant of the Jacobian of $\rho$, $r = (r_1, r_2, r_3)$. And write $dr = dr_1 dr_2 dr_3$. We will also write $\vec{\rho}(r) \equiv \vec{\rho}_r \equiv (0, \rho(r)) \in \bR^4$.
\end{notation}

Using the dynamical variables $\{B_\mu^i\}$ and the Minkowski metric $\eta^{ab}$, we see that the metric $g^{ab} \equiv B^a_\mu\eta^{\mu\gamma}B^b_\gamma$ and the volume $V_R$ is given by \beq V_R(\{B_\mu^i\}) = \int_R \sqrt{\epsilon_{ijk}\epsilon_{\bar{i}\bar{j}\bar{k}}g^{i\bar{i}}g^{j\bar{j}}g^{k\bar{k}}}. \nonumber \eeq

Refer to Notation \ref{n.n.6}. It is not difficult to see that the volume is indeed given by
\begin{align*}
V_R(\{B_\mu^i\})
= \int_R \Bigg\{ \left|\frac{B}{3} \cdot B \vec{\times} B  \right|^2 &- \sum_{(i,j) \in \Upsilon}\left|B_i \times B_0 \cdot B_j \right|^2 + \sum_{i=1}^3 \left|\frac{1}{3}B_i \times B_0 \cdot B_0 \right|^2\Bigg\}^{1/2}\ dV.
\end{align*}

\begin{rem}
Note that $B_i \times B_0 \cdot B_0 \equiv 0$. However, we include this term inside this formula, as when we do the substitution $B_i \times B_0 \mapsto \tilde{B}^i$, this term will give us a non-trivial contribution.
\end{rem}

Refer to Notation \ref{n.x.1}. We want to define a volume path integral
\begin{align}
\frac{1}{Z_{EH}}\int V( \{\ul^v\}_{v=1}^{\un})(\{B^i_\mu\})&W(q; \{\ol^u, \rho_u\}_{u=1}^{\on})(\{A^k_{\alpha\beta}\}) \nonumber\\
& \times   V_R(\{B_\mu^i\}) \exp\left[i\int_{\bR^4} \partial_0 A_0\cdot B \vec{\times} B   - \partial_0 A \cdot \tilde{B}\right] D\Lambda. \label{e.v.5}
\end{align}

\begin{rem}\label{r.z.1}
\begin{enumerate}
  \item When $R$ is the empty set, we define $V_\emptyset \equiv 1$, so we write Expression \ref{e.v.5} as $Z(q; \chi(\oL, \uL))$, which was termed as the Wilson Loop observable of the colored hyperlink $\chi(\oL, \uL))$ in Remark \ref{r.ax.1}.
  \item We will write Expression \ref{e.v.5} as $\hat{V}_R[Z(q; \chi(\oL, \uL))]$.
\end{enumerate}
\end{rem}

We will now make use of Definition \ref{d.cs.r.1} to make sense of the path integral in Expression \ref{e.v.5}.

\begin{lem}
Recall we defined $\tilde{V}(\{B^j_\mu\})$ and $\tilde{W}(\{A^k_{\alpha\beta}\})$ in Equations (\ref{e.w.2}) and (\ref{e.w.3}) respectively. Refer to the parametrizations $\vec{y}^u$ and $\vec{\varrho}^v$ defined in Notation \ref{n.n.3}. After doing a change of variables given in Notation \ref{n.c.1}, the path integral in Expression \ref{e.v.5} is defined as \beq
\frac{1}{\hat{Z}_{EH}}\int \tilde{V}(\{B^j_\mu\}) \tilde{W}(\{A^k_{\alpha\beta}\})\tilde{V}_R(\{B_\mu^i\})  e^{i\int_{\bR^4} A_0\cdot  B   -  A \cdot \tilde{B}}  D\Lambda \label{e.v.6} \eeq
with \beq
\hat{Z}_{EH} = \int \exp\left[i\int_{\bR^4} A_0\cdot  B   -  A \cdot \tilde{B}\right]
D\Lambda, \nonumber \eeq after applying Steps \ref{d.cs.r.1a} and \ref{d.cs.r.1b} in Definition \ref{d.cs.r.1}.

Here,
\begin{align*}
\tilde{V}_R(\{B_\mu^i\})
&= \int_{I^3} \Bigg\{ \left|\frac{\langle [m(\langle B, \delta^{\vec{\rho}(r)} \rangle)]^{-1}B, \delta^{\vec{\rho}(r)} \rangle}{3} \cdot \langle B, \delta^{\vec{\rho}(r)} \rangle  \right|^2 - \sum_{(i,j) \in \Upsilon} \left|\langle \tilde{B}^i, \delta^{\vec{\rho}(r)} \rangle  \cdot \langle \bar{B}_j , \delta^{\vec{\rho}(r)} \rangle \right|^2 \\
& \hspace{2cm}+ \sum_{i=1}^3 \left|\frac{1}{3}\langle \tilde{B}^i, \delta^{\vec{\rho}(r)} \rangle \cdot  \langle \tilde{B}^i, [m(\langle B_i, \delta^{\vec{\rho}(r)} \rangle)]^{-1}\delta^{\vec{\rho}(r)} \rangle \right|^2
\Bigg\}^{1/2} |J_\rho|(r)\ dr.
\end{align*}
Note that we can also write \beq \left\langle\bar{B}_j, \delta^{\vec{\rho}(r)} \right\rangle = \left\langle [m(\langle B, \delta^{\vec{\rho}(r)}\rangle)^{-1}B]_j, \delta^{\vec{\rho}(r)} \right\rangle. \nonumber \eeq
\end{lem}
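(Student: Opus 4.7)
The plan is to mimic the proof of Lemma~\ref{l.x.1}, treating the volume integrand $V_R$ in place of the area integrand $A_S$. The two manipulations are exactly the same: first I apply Step~\ref{d.cs.r.1a} of Definition~\ref{d.cs.r.1}, replacing every pointwise evaluation $f(\vec{x})$ by the pairing $\langle f,\delta^{\vec{x}}\rangle$; then I implement the change of variables of Notation~\ref{n.c.1}, which rewrites the Einstein--Hilbert action $\int \partial_0 A_0\cdot B\vec{\times} B-\partial_0 A\cdot \tilde{B}$ in the bilinear form $\int A_0\cdot B-A\cdot\tilde{B}$. Under these substitutions the factors $V(\{\ul^v\})$ and $W(q;\{\ol^u,\rho_u\})$ transform into $\tilde{V}$ and $\tilde{W}$ exactly as in the area argument, so nothing new needs to be verified for those two factors.

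The main new bookkeeping concerns $V_R$. I would process it term by term. Under the substitutions $B\vec{\times} B\mapsto B$ (so old $B$ becomes $\bar{B}=m(B)^{-1}B$), $B_i\times B_0\mapsto \tilde{B}^i$ (so old $B_0$ becomes $m(B_i)^{-1}\tilde{B}^i$), and $B_j\mapsto\bar{B}_j$, the three summands inside $V_R$ become $\tfrac{\bar{B}}{3}\cdot B$, $\tilde{B}^i\cdot\bar{B}_j$, and $\tfrac{1}{3}\,\tilde{B}^i\cdot m(B_i)^{-1}\tilde{B}^i$, respectively. I would then express each pointwise evaluation at $\vec{\rho}(r)$ via a pairing with $\delta^{\vec{\rho}(r)}$. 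Because $m(B)$ acts by pointwise multiplication through the matrix $B(\vec{x})\vec{\times}$, its inverse at $\vec{\rho}(r)$ agrees with $[m(\langle B,\delta^{\vec{\rho}(r)}\rangle)]^{-1}$, and the analogous identification holds for $m(B_i)^{-1}$. For the third summand I would additionally invoke the skew-symmetry of $m(B_i)^{-1}$ to push the operator onto $\delta^{\vec{\rho}(r)}$, exactly the move already used in Lemma~\ref{l.x.1}; this accounts for the placement of the operator in the last term of the claimed formula (any sign that arises from skew-symmetry is absorbed by the modulus squared). Assembling the three pieces reproduces the stated expression for $\tilde{V}_R$. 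The skew-symmetry of $\partial_0^{-1}$ also reproduces $\tilde{W}$ from $W$ as in the area case.

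The main obstacle, and the only genuinely subtle point, is that the inverse operators $m(\langle B,\delta^{\vec{\rho}(r)}\rangle)^{-1}$ and $m(\langle B_i,\delta^{\vec{\rho}(r)}\rangle)^{-1}$ are only formal objects at this stage: their ``coefficients'' involve the Dirac delta, which does not lie in $\overline{\mathcal{S}}_\kappa(\bR^4)$, and $m(B)$ itself has no literal inverse in general (for instance $m(B)B\vec{\times}B=0$ whenever $B\vec{\times}B=0$). I would therefore emphasize that the equality is read purely formally, as a reorganization mirroring Lemma~\ref{l.x.1}, and that it acquires an analytic meaning only after Step~\ref{d.cs.r.1c}, when $\delta^{\vec{\rho}(r)}$ is replaced by the Gaussian $p_\kappa^{\vec{\rho}(r)}$ and $m(B)$, $m(B_i)$ are replaced by the regularized operators $m_\kappa(B)$, $m_\kappa(B_i)$ introduced after Lemma~\ref{l.x.1}. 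Modulo that caveat, the proof is a direct transcription of the algebraic manipulations already carried out in the area case, so I would keep it short and refer back to the detailed computation in Lemma~\ref{l.x.1} for the $\tilde{V}$ and $\tilde{W}$ pieces and for the skew-symmetry steps.
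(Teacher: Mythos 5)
Your proposal follows essentially the same route as the paper's own proof: it defers the $\tilde{V}$ and $\tilde{W}$ factors to Lemma \ref{l.x.1}, performs the identical substitutions $B\vec{\times}B\mapsto B$, $B\mapsto\bar{B}$, $B_i\times B_0\mapsto\tilde{B}^i$, $B_0\mapsto m(B_i)^{-1}\tilde{B}^i$ on the volume integrand, and uses skew-symmetry of $m(B_i)^{-1}$ to move the operator onto $\delta^{\vec{\rho}(r)}$ in the third summand, with the sign absorbed by the modulus. The added caveat about the formal nature of the inverses is consistent with the discussion the paper gives immediately after the lemma and does not alter the argument.
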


\begin{proof}
We have shown in Lemma \ref{l.x.1} how to obtain $\tilde{V}$ and $\tilde{W}$ respectively. We will only show how to make the substitution inside the volume integrand $V_R$, which we will now give the details.

Apply Step \ref{d.cs.r.1b} in Definition \ref{d.cs.r.1} and using the notations and substitutions as discussed in Lemma \ref{l.x.1}, make the following change of variables,
\begin{align*}
B \vec{\times} B \longmapsto B,\ \
B \longmapsto \bar{B} \equiv m(B)^{-1}B, \ \
B_i \times B_0 \longmapsto& \tilde{B}^i,\ \
B_j \longmapsto \bar{B}_j,
\end{align*}
and \beq B_0 \longmapsto  m(B_i)^{-1}\tilde{B}^i, \ \ B_i \times B_0 \cdot B_0 \longmapsto \tilde{B}^i m(B_i)^{-1}\tilde{B}^i. \nonumber \eeq

Therefore,the volume integrand becomes
\begin{align*}
\tilde{V}_R(\{B_\mu^i\})
= \int_R \Bigg\{ \left|\frac{\bar{B}}{3} \cdot B  \right|^2 &- \sum_{(i,j) \in \Upsilon} \left|\tilde{B}^i  \cdot \bar{B}_j \right|^2  + \sum_{i=1}^3 \left| \frac{1}{3}\tilde{B}^i \cdot m(B_i)^{-1}\tilde{B}^i \right|^2
\Bigg\}^{1/2}\ dV.
\end{align*}

In terms of the parametrization $\rho$,
\begin{align*}
\tilde{V}_R(\{B_\mu^i\})
= \int_{I^3} &\Bigg\{ \left|\frac{\bar{B}}{3} \cdot B  \right|^2 - \sum_{(i,j) \in \Upsilon} \left|\tilde{B}^i  \cdot \bar{B}_j \right|^2
+ \sum_{i=1}^3 \left|\frac{1}{3}\tilde{B}^i \cdot m(B_i)^{-1}\tilde{B}^i \right|^2
\Bigg\}^{1/2}(\vec{\rho}(r)) |J_\rho|(r)\ dr.
\end{align*}
Note that $\bar{B}(\vec{x}) \equiv m(B(\vec{x}))^{-1} B(\vec{x})$ and $[m(B_i)^{-1}\tilde{B}^i](\vec{x}) \equiv m(B_i(\vec{x}))^{-1}\tilde{B}^i(\vec{x})$.

Apply Step \ref{d.cs.r.1a} in Definition \ref{d.cs.r.1} and write
\begin{align*}
\bar{B}(\vec{\rho}(r)) =& \langle [m(\langle B, \delta^{\vec{\rho}(r)} \rangle)]^{-1}B, \delta^{\vec{\rho}(r)} \rangle, \\
B(\vec{\rho}(r)) =& \langle B,  \delta^{\vec{\rho}(r)} \rangle,\ \tilde{B}^i(\vec{\rho}(r)) = \langle \tilde{B}^i, \delta^{\vec{\rho}(r)} \rangle,\ B_i(\vec{\rho}(r))= \langle B_i, \delta^{\vec{\rho}(r)}\rangle, \\
[m(B_i)^{-1}\tilde{B}^i](\vec{\rho}(r)) \equiv& m(B_i(\vec{\rho}(r)))^{-1}\tilde{B}^i(\vec{\rho}(r))
= \langle m(B_i(\vec{\rho}(r)))^{-1}\tilde{B}^i, \delta^{\vec{\rho}(r)}\rangle \\
=& - \langle \tilde{B}^i, m(B_i(\vec{\rho}(r)))^{-1}\delta^{\vec{\rho}(r)}\rangle
=- \langle \tilde{B}^i, [m(\langle B_i, \delta^{\vec{\rho}(r)} \rangle)]^{-1}\delta^{\vec{\rho}(r)}\rangle,
\end{align*}
because $m(B)^{-1}$ and $m(B_i)^{-1}$ are skew-symmetric.
Substitute into the above volume integrand and we will obtain our result.
\end{proof}

As stated in Section \ref{s.ao}, both $m(B(\vec{x}))^{-1}$, $m(B_i(\vec{x}))^{-1}$ cannot be defined. Recall in Section \ref{s.ao}, we approximate the Dirac-delta function with a Gaussian function $p_\kappa$, $m(B)$ with $m_\kappa(B)$ and $m(B_i)$ with $m_\kappa(B_i)$. And we need to add in factors of $\bk$. See Definition \ref{d.he.1}.

Because $R$ is 3-dimensional submanifold, we need to include a factor of $\bk^3$ to the volume integral. Hence we replace $\tilde{V}_R(\{\{B_\mu^i\}\})$ with
\begin{align}
\tilde{V}_{\kappa,R}(\{\{B_\mu^i\}\})
&= \bk^3\int_{I^3} \Bigg\{ \left|\frac{\langle [m_\kappa(\langle B, p_\kappa^{\vec{\rho}(r)} \rangle)]^{-1}B, p_\kappa^{\vec{\rho}(r)} \rangle}{3} \cdot \langle B, p_\kappa^{\vec{\rho}(r)} \rangle  \right|^2 \nonumber\\
& \hspace{1.6cm} - \sum_{(i,j) \in \Upsilon} \left|\langle \tilde{B}^i, p_\kappa^{\vec{\rho}(r)} \rangle  \cdot \langle \bar{B}_j^\kappa , p_\kappa^{\vec{\rho}(r)} \rangle \right|^2 \nonumber\\
& \hspace{1.6cm}+ \sum_{i=1}^3 \left|\frac{1}{3}\langle \tilde{B}^i, p_\kappa^{\vec{\rho}(r)} \rangle \cdot  \langle \tilde{B}^i, [m_\kappa(\langle B_i, p_\kappa^{\vec{\rho}(r)} \rangle)]^{-1}p_\kappa^{\vec{\rho}(r)} \rangle \right|^2
\Bigg\}^{1/2} |J_\rho|(r)\ dr. \label{e.v.10}
\end{align}
Note that we write \beq \left\langle\bar{B}_j, p_\kappa^{\vec{\rho}(r)} \right\rangle := \left\langle [m_\kappa(\langle B, p_\kappa^{\vec{\rho}(r)}\rangle)^{-1}B]_j, p_\kappa^{\vec{\rho}(r)} \right\rangle, \nonumber \eeq after making the approximations.

Together with the preceding approximation, we replace $\tilde{W}$ and $\tilde{V}$ with $\tilde{W}_\kappa$ and $\tilde{V}_\kappa$ respectively, defined earlier in Definition \ref{d.he.1}. Because $\tilde{V}_\kappa(\{B^j_\mu\})$ and $\tilde{V}_{\kappa,R}(\{\{B_\mu^i\}\})$ are scalars, we can bring them inside the time ordering operator $\mathcal{T}$ and trace, which is the linear functional $\widetilde{\Tr}$. Thus we approximate our path integral in Expression \ref{e.v.6} with
\beq
\frac{1}{\hat{Z}_{EH}}\widetilde{\Tr}\int \tilde{V}_\kappa(\{B^j_\mu\}) \tilde{W}_\kappa(\{A^k_{\alpha\beta}\})\tilde{V}_{\kappa,R}(\{\{B_\mu^i\}\})  e^{i\int_{\bR^4} A_0\cdot  B   -  A \cdot \tilde{B}}  D\Lambda \label{e.v.7}. \eeq This completes Steps \ref{d.cs.r.1c} and \ref{d.cs.r.1d} in Definition \ref{d.cs.r.1}.

\begin{defn}\label{d.vx.1}
Define the following 9-vector $\lambda_\kappa = (\lambda_{\kappa}^{1} , \lambda_{\kappa}^{2} , \lambda_{\kappa}^{3})$, whereby for $r \in I^3$,
\begin{align*}
\lambda_{\kappa}^{j}(r) =& \frac{-iq\kappa^2}{2\sqrt{4\pi}}\sum_{u=1}^{\on}\int_0^1 ds\ \left\langle p_\kappa^{\vec{\rho}_{r}}, \partial_0^{-1} p_\kappa^{\vec{y}^u_s}\right\rangle y^{u,\prime}_{j,s}\otimes \bar{E}\in \bC^{\times^3}.
\end{align*}

Also define a 3-vector $\tilde{\lambda}_\kappa(r)$ by
\begin{align*}
\tilde{\lambda}_{\kappa}(r) =& \frac{-iq\kappa^2}{2\sqrt{4\pi}}\sum_{u=1}^{\on}\int_0^1 ds\ \left\langle p_\kappa^{\vec{\rho}_{r}}, \partial_0^{-1} p_\kappa^{\vec{y}^u_s}\right\rangle y^{u,\prime}_{s} .
\end{align*}
\end{defn}

\begin{rem}
\begin{enumerate}
  \item We can also identify $\tilde{\lambda}_\kappa(r) \otimes \bar{E}$ with $\lambda_\kappa(r)$.
  \item Now, $\left\langle p_\kappa^{\vec{\rho}_{r}}, \partial_0^{-1} p_\kappa^{\vec{y}^u_s}\right\rangle$ means $\int_{\bR^4} p_\kappa^{\vec{\rho}_{r}} \cdot \left[ \partial_0^{-1} p_\kappa^{\vec{y}^u_s} \right]\ d\lambda$ and $\partial_0^{-1} p_\kappa^{\vec{y}^u_s}$ was defined using Equation (\ref{e.d.1}) in Definition \ref{d.lo.1}.
\end{enumerate}
\end{rem}

\begin{lem}
Recall $\mathcal{W}_\kappa^\pm$ were defined in Definition \ref{d.w.1}. Apply Step \ref{d.cs.r.1e} in Definition \ref{d.cs.r.1}, the path integral in Expression \ref{e.v.7} is hence computed as
\begin{align}
\widetilde{\Tr}\left(
  \begin{array}{cc}
    A_\kappa\bigotimes_{u=1}^{\on}\mathcal{W}_\kappa^+(q, ; \ol^u, \uL) &\ 0\\
    0 &\ B_\kappa\bigotimes_{u=1}^{\on}\mathcal{W}_\kappa^-(q; \ol^u, \uL) \\
  \end{array}
\right), \label{e.v.8}
\end{align}
whereby
\begin{align*}
A_\kappa :=& \bk^3\int_{I^3} \sqrt{\alpha_\kappa(\vec{\rho}(r))}|J_\rho|(r)\ dr, \ \
B_\kappa := \bk^3\int_{I^3} \sqrt{\beta_\kappa(\vec{\rho}(r))}|J_\rho|(r)\ dr,
\end{align*}
with $\alpha_\kappa$ and $\beta_\kappa$ both defined in Equations (\ref{ex.v.1}) and (\ref{ex.v.2}) respectively.
\end{lem}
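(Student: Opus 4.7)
The proof will closely parallel the treatment already carried out for the area integral in Lemma \ref{l.x.2}, so the plan is to reuse that machinery for the factors $\tilde{V}_\kappa$ and $\tilde{W}_\kappa$ and only do the genuinely new work on the volume integrand $\tilde{V}_{\kappa,R}$. Applying Step \ref{d.cs.r.1e} of Definition \ref{d.cs.r.1}, I substitute
\begin{align*}
B^j &\longmapsto -iq\kappa\bk\sum_{u=1}^{\on}\int_0^1 ds\ y^{u,\prime}_{j,s}\partial_0^{-1}p_\kappa^{\vec{y}^u_s} \otimes \rho_u^+(\widetilde{E}_s),\\
\tilde{B}^i &\longmapsto iq\kappa\bk\sum_{u=1}^{\on}\int_0^1 ds\ y^{u,\prime}_{s}\partial_0^{-1}p_\kappa^{\vec{y}^u_s} \otimes \rho_u^-(\hat{E}_s^{\tau(i)}),
\end{align*}
and the induced substitutions $m_\kappa(\langle B, p_\kappa^{\vec{\rho}(r)}\rangle)^{-1}\mapsto m_\kappa(\lambda_\kappa(r))^{-1}$ and $m_\kappa(\langle B_i, p_\kappa^{\vec{\rho}(r)}\rangle)^{-1}\mapsto m_\kappa(\tilde{\lambda}_\kappa(r))^{-1}$, with $\lambda_\kappa$, $\tilde{\lambda}_\kappa$ defined in Definition \ref{d.vx.1}. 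As already established in the proof of Lemma \ref{l.x.2}, plugging these into $\tilde{V}_\kappa$ produces precisely the block-diagonal matrix whose upper-left and lower-right entries are $\bigotimes_{u=1}^{\on}\mathcal{W}_\kappa^+(q;\ol^u,\uL)$ and $\bigotimes_{u=1}^{\on}\mathcal{W}_\kappa^-(q;\ol^u,\uL)$ respectively, so nothing new is needed there.

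Next I turn to $\tilde{V}_{\kappa,R}$ in Equation (\ref{e.v.10}). The integrand is a sum of three squared quantities, and I treat them one at a time. The triple-product term $\bigl|\tfrac{1}{3}\langle [m_\kappa(\langle B, p_\kappa^{\vec{\rho}(r)}\rangle)]^{-1}B,p_\kappa^{\vec{\rho}(r)}\rangle\cdot\langle B, p_\kappa^{\vec{\rho}(r)}\rangle\bigr|^2$ contributes entirely to the $\rho^+$ block, because every factor of $B$ is replaced by something proportional to $\rho_u^+(\widetilde{E}_s)$; this will contribute to $\alpha_\kappa$. Similarly, the sum $\sum_i\bigl|\tfrac{1}{3}\langle\tilde{B}^i,p_\kappa^{\vec{\rho}(r)}\rangle\cdot\langle\tilde{B}^i,[m_\kappa(\langle B_i,p_\kappa^{\vec{\rho}(r)}\rangle)]^{-1}p_\kappa^{\vec{\rho}(r)}\rangle\bigr|^2$ involves only $\tilde{B}^i$ and $B_i$ and so contributes to the $\rho^-$ block inside $\beta_\kappa$. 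The mixed cross term $-\sum_{(i,j)\in\Upsilon}|\langle\tilde{B}^i,p_\kappa^{\vec{\rho}(r)}\rangle\cdot\langle\bar B_j^\kappa,p_\kappa^{\vec{\rho}(r)}\rangle|^2$ couples $\tilde{B}$ (i.e.\ $\rho^-$) with $B$ (i.e.\ $\rho^+$), and so contributes to both $\alpha_\kappa$ and $\beta_\kappa$ once one applies the block-diagonal structure and expands the square; I will verify that it splits cleanly into the two blocks because the path-integral replacement diagonalizes the $(\rho^+,\rho^-)$-structure.

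The integrand under the square root then becomes, at each $\vec{\rho}(r)$, a quantity of the form $\alpha_\kappa(\vec{\rho}(r))\oplus\beta_\kappa(\vec{\rho}(r))$, where $\alpha_\kappa$ and $\beta_\kappa$ are the expressions defined in Equations (\ref{ex.v.1}) and (\ref{ex.v.2}). Since $\alpha_\kappa$ and $\beta_\kappa$ are both scalars (tensored against identity operators on $V^\pm$ after the matching issue is handled the same way as in Section \ref{s.ao}), taking the square root is well-defined on each block separately. Multiplying by $|J_\rho|(r)\,dr$ and integrating over $I^3$ with the prefactor $\bk^3$ introduced in Step \ref{d.cs.r.1d} yields exactly $A_\kappa$ on the $\rho^+$ block and $B_\kappa$ on the $\rho^-$ block. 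Pulling these scalars past $\widetilde{\Tr}$ (permissible by the same argument as in Lemma \ref{l.x.2}) gives the claimed block-diagonal expression.

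The main obstacle will be the mixed cross term: unlike in the area case the volume integrand is genuinely a product of three one-forms, so the substitution produces tensor products of three representation matrices rather than two. To keep the $\rho^+$ and $\rho^-$ pieces from mixing I will need to argue that, after substitution, the operator $m_\kappa(\langle B, p_\kappa^{\vec{\rho}(r)}\rangle)^{-1}$ acts block-diagonally on the $(\rho^+,\rho^-)$-decomposition, so that $\bar B_j^\kappa$ remains a pure $\rho^+$-object. Granting this (which follows from the fact that $\rho^\pm$ land in disjoint blocks of $\mathfrak{su}(2)\times\mathfrak{su}(2)$), the expansion of the cross term is routine and its contribution distributes to $\alpha_\kappa$ and $\beta_\kappa$ in the way recorded in the statement.
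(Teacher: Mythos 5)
Your overall strategy — reusing the substitutions of Lemma \ref{l.x.2} for $\tilde{V}_\kappa$ and $\tilde{W}_\kappa$, deriving the induced replacements $m_\kappa(\langle B, p_\kappa^{\vec{\rho}(r)}\rangle)^{-1}\mapsto m_\kappa(\lambda_\kappa(r))^{-1}$ and $m_\kappa(\langle B_i, p_\kappa^{\vec{\rho}(r)}\rangle)^{-1}\mapsto m_\kappa(\tilde{\lambda}_\kappa(r))^{-1}$, and then treating the three squared terms of $\tilde{V}_{\kappa,R}$ separately — is exactly the paper's route, and your assignment of the first term to $\alpha_\kappa$ and the third term to $\beta_\kappa$ is correct.

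However, your treatment of the mixed cross term $-\sum_{(i,j)\in\Upsilon}\bigl|\langle\tilde{B}^i,p_\kappa^{\vec{\rho}(r)}\rangle\cdot\langle\bar{B}_j^\kappa,p_\kappa^{\vec{\rho}(r)}\rangle\bigr|^2$ contains a genuine error. You claim it ``contributes to both $\alpha_\kappa$ and $\beta_\kappa$ once one applies the block-diagonal structure and expands the square.'' It does not: after the Step \ref{d.cs.r.1e} substitution, $\tilde{B}^i$ carries a factor $\rho_u^-(\hat{E}_s^{\tau(i)})$ while $\bar{B}_j^\kappa$ carries factors $\rho_u^+(\hat{E}_s^{0\cdot})$, and by the block conventions of Notation \ref{n.su.1} (where $\rho^+\equiv(\rho^+,0)$ and $\rho^-\equiv(0,\rho^-)$) one has $\rho^+(\hat{E}^{0i})\otimes\rho^-(\hat{E}^{\tau(j)})\equiv 0$ for all $i,j$. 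Hence the cross term vanishes identically and contributes nothing to either block. This matters for the correctness of the lemma as stated: Equations (\ref{ex.v.1}) and (\ref{ex.v.2}) contain no trace of the cross term, so if you executed your plan of ``distributing'' it between $\alpha_\kappa$ and $\beta_\kappa$ you would obtain extra terms and fail to reproduce the claimed $A_\kappa$ and $B_\kappa$. The missing observation is precisely that mixing a pure $\rho^+$-object with a pure $\rho^-$-object annihilates the term, not that it splits cleanly between the two blocks.
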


\begin{proof}
In the proof of Lemma \ref{l.x.2}, we obtain the function $\tilde{V}_\kappa(\{B^j_\mu\})$ using the substitutions given by Expressions \ref{e.b.1} and \ref{e.b.2}.

Equation \ref{e.b.1} will give us \beq
m_\kappa(\langle B, p_\kappa^{\vec{\varrho}^v_{\bar{s}}}\rangle)^{-1}\longmapsto m_\kappa(\lambda_\kappa^v(\bar{s}))^{-1},\ m_\kappa(\langle B, p_\kappa^{\vec{\rho}_r} \rangle)^{-1} \longmapsto m_\kappa(\lambda_\kappa(r))^{-1}  \nonumber \eeq
and it also means that \beq
B_i \longmapsto -iq\kappa\bk\sum_{u=1}^{\on}\int_0^1 ds\ y^{u,\prime}_{s}\partial_0^{-1}p_\kappa^{\vec{y}^u_s} \otimes \rho_u^+(\hat{E}_s^{0i}). \nonumber \eeq

Hence it means we replace
\begin{align*}
m_\kappa(\langle B_i, p_\kappa^{\vec{\varrho}^v_{\bar{s}}}\rangle)^{-1}\longmapsto\ m_\kappa(\tilde{\lambda}_\kappa^v(\bar{s}))^{-1} \ {\rm and}\ [m_\kappa(\langle B_i, p_\kappa^{\vec{\rho}_r} \rangle)]^{-1} \longmapsto\ m_\kappa(\tilde{\lambda}_\kappa(r))^{-1}.
\end{align*}

Equation \ref{e.b.1} will also imply that
\begin{align*}
[m_\kappa(\langle B, &p_\kappa^{\vec{\rho}_r} \rangle)]^{-1}B \\
&\longmapsto
m_\kappa(\lambda_\kappa(r))^{-1}\left\{ -iq\kappa\bk\sum_{u=1}^{\on}\int_0^1 ds\ y^{u,\prime}_{ s}\partial_0^{-1}p_\kappa^{\vec{y}^u_s} \otimes (\rho_u^+(\hat{E}_s^{01}),  \rho_u^+(\hat{E}_s^{02}), \rho_u^+(\hat{E}_s^{03})) \right\} \\
&\equiv -iq\kappa\bk\sum_{u=1}^{\on}\int_0^1 ds\ m_\kappa(\tilde{\lambda}_\kappa(r))^{-1}[y^{u,\prime}_{ s}\partial_0^{-1}p_\kappa^{\vec{y}^u_s}] \otimes (\rho_u^+(\hat{E}_s^{01}),  \rho_u^+(\hat{E}_s^{02}), \rho_u^+(\hat{E}_s^{03})).
\end{align*}

Equation \ref{e.b.2} will lead us to replace \beq m_\kappa(\langle \tilde{B}^i, p_\kappa^{\vec{\rho}_r} \rangle)^{-1} \longmapsto
m_\kappa(\tilde{\lambda}_\kappa(r))^{-1}. \nonumber \eeq

Note that $\lambda_\kappa^v(\bar{s})$ and $\tilde{\lambda}_\kappa^v(\bar{s})$ were given by Equations (\ref{e.l.1})  and (\ref{e.l.2}) respectively; $\lambda_\kappa(r)$ and $\tilde{\lambda}_\kappa(r)$ were defined in Definition \ref{d.vx.1}.

Suppose $\rho_u \equiv (\rho_u^+, 0)$. Now the above substitutions, when applied to $\tilde{V}_\kappa$ as defined in Definition \ref{d.he.1} will yield $\mathcal{W}_\kappa^+$ and
when applied to the term
\beq \left|\frac{\langle [m_\kappa(\langle B, p_\kappa^{\vec{\rho}(r)} \rangle)]^{-1}B, p_\kappa^{\vec{\rho}(r)} \rangle}{3} \cdot \langle B, p_\kappa^{\vec{\rho}(r)} \rangle  \right|^2, \nonumber \eeq
will yield (after some simplification)
\begin{align}
\alpha_\kappa(\vec{\rho}(r)) := q^4&\kappa^4 \Bigg|\sum_{u, \bar{u}=1}^{\on} \bk\int_0^1 ds\ \left\langle m_\kappa(\tilde{\lambda}_\kappa(r))^{-1}[y_s^{u,\prime}p_\kappa^{\vec{y}_s^u}] ,\partial_0^{-1}p_\kappa^{\vec{\rho}(r)}, \right\rangle \cdot  \bk\int_0^1 d\bar{s}\  \left\langle \partial_0^{-1}p_\kappa^{\vec{y}_{\bar{s}}^{\bar{u}}}, p_\kappa^{\vec{\rho}(r)} \right\rangle y_{\bar{s}}^{\bar{u},\prime} \nonumber \\
& \hspace{1cm} \otimes \frac{1}{3}\sum_{i=1}^3 \rho_u^+(\hat{E}_s^{0i}) \otimes \rho_{\bar{u}}^+(\hat{E}_{\bar{s}}^{0i})\Bigg|^2
. \label{ex.v.1}
\end{align}
Note that we made use of $\langle \partial_0^{-1}f, g \rangle = -\langle f, \partial_0^{-1}g \rangle$ to obtain the above formula. Refer to Remark \ref{r.v.1} for the meaning of the term $\left\langle \partial_0^{-1}p_\kappa^{\vec{\rho}(r)}, m_\kappa(\tilde{\lambda}_\kappa(r))^{-1}[y_s^{u,\prime}p_\kappa^{\vec{y}_s^u}] \right\rangle$.



Apply Step \ref{d.cs.r.1e} in Definition \ref{d.cs.1}, the path integral in Expression \ref{e.v.7} is now define as
\begin{align}
\widetilde{\Tr}\ A_\kappa &\bigotimes_{u=1}^{n}\exp\Bigg\{ \frac{iq}{4}\frac{\kappa^3}{4\pi}\sum_{v=1}^{\underline{n}}\int_{I^2}\ d\hat{s}\  \left\langle  y^{u,\prime}_{j,s}\partial_0^{-1}p_\kappa^{\vec{y}^u_s} \otimes \rho_u^+(\widetilde{E}_s), \left\{ \left[m_\kappa(\lambda_\kappa^v(\bar{s}) )\right]^{-1} (\varrho_{\bar{s}}^{v,\prime} p_\kappa^{\vec{\varrho}^{v}_{\bar{s}}}) \right\}^j\right\rangle\Bigg\} \nonumber \\
\equiv& \widetilde{\Tr}\ \left[\bk^3\int_{I^3} \sqrt{a_\kappa(\vec{\rho}(r))}|J_\rho|(r)\ dr\bigotimes_{u=1}^{n}\mathcal{W}_\kappa^+(q, ; \ol^u, \uL)\right], \label{e.v.9a}
\end{align}
which follows from Definition \ref{d.w.1}.

Suppose $\rho_u \equiv (0, \rho_u^-)$. Now the above substitutions, when applied to $\tilde{V}_\kappa$ as defined in Definition \ref{d.he.1} will yield $\mathcal{W}_\kappa^-$.

From Notation \ref{n.su.1}, note that $\rho^+(\hat{E}^{0i}) \otimes \rho^-(\hat{E}^{\tau(j)}) \equiv 0$ for any $i$ and $j$. Hence, after the substitution, the term $\sum_{(i,j) \in \Upsilon}  |\tilde{B}^i  \cdot \bar{B}_j^\kappa|^2$ will not contribute to anything, but \beq \sum_{i=1}^3  \left|\frac{1}{3}\langle \tilde{B}^i, p_\kappa^{\vec{\rho}(r)} \rangle \cdot  \langle \tilde{B}^i, [m_\kappa(\langle B_i, p_\kappa^{\vec{\rho}(r)} \rangle)]^{-1}p_\kappa^{\vec{\rho}(r)} \rangle \right|^2 \nonumber \eeq will give us the term
\begin{align}
\beta_\kappa(\vec{\rho}(r)) := q^4&\kappa^4\Bigg|\sum_{u, \bar{u}=1}^{\on}
\bk\int_0^1 d\bar{s}\  \left\langle \partial_0^{-1}p_\kappa^{\vec{y}_{\bar{s}}^{\bar{u}}} , p_\kappa^{\vec{\rho}(r)} \right\rangle y_{\bar{s}}^{\bar{u},\prime}
\cdot
\bk\int_0^1 ds\ \left\langle \partial_0^{-1}p_\kappa^{\vec{y}_s^u}, m_\kappa(\tilde{\lambda}_\kappa(r))^{-1}[y_s^{u,\prime}p_\kappa^{\vec{\rho}(r)}] \right\rangle   \nonumber \\
& \hspace{1cm} \otimes \frac{1}{3}\sum_{i=1}^3 \rho_{\bar{u}}^-(\hat{E}_{\bar{s}}^{\tau(i)}) \otimes \rho_u^-(\hat{E}_s^{\tau(i)})
\Bigg|^2. \label{ex.v.2}
\end{align}


Apply Step \ref{d.cs.r.1e} in Definition \ref{d.cs.1}, the path integral in Expression \ref{e.v.7} is now define as
\begin{align}
\widetilde{\Tr}\ B_\kappa &\bigotimes_{u=1}^{n}\exp\Bigg\{
\frac{-iq}{4}\frac{\kappa^3}{4\pi}\sum_{v=1}^{\underline{n}}\int_{I^2}\ d\hat{s}\ \left\langle y^{u,\prime}_{k,s}\partial_0^{-1} p_\kappa^{\vec{y}^u_s}, \left\{ \left[m_\kappa(\tilde{\lambda}_\kappa^v(\bar{s}) )\right]^{-1}(\varrho_{\bar{s}}^{v,\prime} p_\kappa^{\vec{\varrho}^{v}_{\bar{s}}}) \right\}^k \right\rangle  \otimes \sum_{j=1}^3\rho_u^-(\hat{E}_s^{\tau(j)}) \Bigg\}\nonumber \\
\equiv& \widetilde{\Tr}\ \left[\bk^3\int_{I^3} \sqrt{b_\kappa(\vec{\rho}(r))}|J_\rho|(r)\ dr\bigotimes_{u=1}^{n} \mathcal{W}_\kappa^-(q, ; \ol^u, \uL)\right], \label{e.v.9b}
\end{align}
which follows from Definition \ref{d.w.1}.

In the general case $\rho_u \equiv (\rho_u^+, \rho_u^-)$, Expressions \ref{e.v.9a} and \ref{e.v.9b} will give us our desired result. This completes Step \ref{d.cs.r.1e} in Definition \ref{d.cs.r.1}.
\end{proof}

\begin{rem}\label{r.v.1}
\begin{enumerate}
  \item Note that $m_\kappa(\tilde{\lambda}_\kappa(r))^{-1}[y_s^{u,\prime}p_\kappa^{\vec{y}_s^u}] \equiv (f_1, f_2, f_3)$ is a 3-vector, each $f_i$ is a function on $\bR^4$. And $\left\langle \partial_0^{-1}p_\kappa^{\vec{\rho}(r)}, m_\kappa(\tilde{\lambda}_\kappa(r))^{-1}[y_s^{u,\prime}p_\kappa^{\vec{y}_s^u}] \right\rangle$ means  \begin{align*}
\left\langle \partial_0^{-1}p_\kappa^{\vec{\rho}(r)}, (f_1, f_2, f_3) \right\rangle
=  \left( \left\langle \partial_0^{-1}p_\kappa^{\vec{\rho}(r)}, f_1 \right\rangle, \left\langle \partial_0^{-1}p_\kappa^{\vec{\rho}(r)}, f_2 \right\rangle, \left\langle \partial_0^{-1}p_\kappa^{\vec{\rho}(r)}, f_3 \right\rangle \right)
.
\end{align*}

  \item When $V = \emptyset$, then Expression \ref{e.v.8} reduces to $Z(\kappa, q; \chi(\oL, \uL) )$ as defined in Equation (\ref{e.z.1}).
\end{enumerate}
\end{rem}

There is a problem with Expression \ref{e.v.8}, as the square root of Expressions \ref{ex.v.1} and \ref{ex.v.2} do not make sense.  The problem lies with
\begin{align*}
\sum_{i=1}^3 & \rho_u^+(\hat{E}_s^{0i}) \otimes \rho_{\bar{u}}^+(\hat{E}_{\bar{s}}^{0i}) \ {\rm and}\
\sum_{i=1}^3  \rho_u^-(\hat{E})_s^{\tau(i)}) \otimes \rho_{\bar{u}}^-(\hat{E}_{\bar{s}}^{\tau(i)}),
\end{align*}
which lies in ${\rm End}(V_{\bar{u}}^+)$ and ${\rm End}(V_{\bar{u}}^-)$ respectively.

Of course, the above sum of tensor products is not the problem. The problem will come later, when we have to take the absolute value. We do not know how to take the modulus of $\sum_{i=1}^3 \rho_u^+(\hat{E}^{0i}) \otimes \rho_{\bar{u}}^+(\hat{E}^{0i})$ and $\sum_{i=1}^3 \rho_u^-(\hat{E}^{\tau(i)}) \otimes \rho_{\bar{u}}^-(\hat{E}^{\tau(i)})$, when $\rho_u^\pm \neq \rho_{\bar{u}}^\pm$.

Now, the reader may think that if we make the representation to be the same throughout, i.e. all the component in the matter hyperlink have the same color, then we will resolve the problem. Unfortunately, Expression \ref{e.v.8} is still not defined. The problem lies with the square root and the time ordering operator. The square root function or the modulus function is not analytic at 0, so it is not clear how to apply the time ordering operator.

In a sequel \cite{EH-Lim04} to this article, we will show that when we take the limit, the path integral is computed using the crossings of a link diagram. In fact, there is another problem that is lurking, which is the self-linking problem (for links) first pointed out by Witten in \cite{MR990772}. We will not give the details here; we refer the reader to \cite{CS-Lim02} on an explanation of the self-linking problem. A quick answer is that we need to consider a ribbon or a framed link instead, not a link. A ribbon is a link, equipped with a non-tangential normal, also known as a frame, to the link. This normal, will give rise to half-twists on the link diagram.


Project the matter hyperlink in $\bR^3$ and we will refer to it as a link. Let $l^1, \ldots l^{\on}$ be the components in the link, with each $l^u$ being a knot in $\bR^3$. For each knot $l^u$, let $\mathcal{N}(l^u)$ be a tubular neighborhood of $l^u$, homeomorphic to the torus, containing $l^u$ as a longitude curve in it, such that $\mathcal{N}(l^u) \cap \mathcal{N}(l^{\bar{u}}) = \emptyset$ if $u \neq \bar{u}$.

Now, we can always write the compact region $R$ as a disjoint union $\bigcup_{v=1}^{\bar{m}} R_v$, such that either
\begin{description}
  \item[i] \noindent $R_v \subseteq \mathcal{N}(l^u)$ for some $u$ and $R_v$ contains an arc $\tilde{l} \subset l^u$, such that for every $k= 1,2, 3$, $\tilde{l}$ projects onto the plane $\Sigma_k$ to form a link diagram as defined in \cite{CS-Lim02} with no crossing;
  \item[ii] \noindent or $R_v \cap \mathcal{N}(l^u) = \emptyset$ for every $u = 1, \ldots, \on$.
\end{description}

So, it suffices to consider the path integral over a compact region $R \subset \bR^3$ which either is inside the tubular neighborhood for some knot $l^u$ or it does not intersect any tubular neighborhood at all. By considering such a compact region $R$, we will see in a sequel to this article, the crossings in a link diagram no longer contribute to the path integral.

From \cite{CS-Lim02}, we see that each half-twist on a link diagram of a knot $l^u$ gives rise to an operator $\sum_{i=1}^3 \rho_u^+(\hat{E}^{0i}) \rho_{u}^+(\hat{E}^{0i})$ and $\sum_{i=1}^3 \rho_u^-(\hat{E}^{\tau(i)}) \rho_{u}^-(\hat{E}^{\tau(i)})$, which are $-\xi_{\rho_{u}^+}I_{\rho_u^+}$ and $-\xi_{\rho_{u}^-}I_{\rho_u^-}$ respectively. Then, it is clear how to take the absolute value, i.e. \beq \left| -\xi_{\rho_{u}^+}I_{\rho_u^+} \right| = \xi_{\rho_{u}^+}I_{\rho_u^+},\ \ \left| -\xi_{\rho_{u}^-}I_{\rho_u^-} \right| = \xi_{\rho_{u}^-}I_{\rho_u^-}. \nonumber \eeq Do note that the volume functional commutes with the matrices, so indeed, we really need to get some scalar multiple of the identity.

\begin{notation}\label{n.r.2}
Let $L = \{\ol^1, \ldots, \ol^{\on}\}$ be a matter hyperlink. Project each $\ol^u$ into $\bR^3$ to form a knot $l^u$ and let $\mathcal{N}(l^u)$ be a tubular neighborhood of $l^u$. Given a compact region $R \subset \bR^3$, write $R = \bigcup_{v=1}^{\bar{m}}R_v$ as a disjoint union, such that either $R_v \subseteq \mathcal{N}(l^u)$ for some $u$ or $R_v \cap \mathcal{N}(l^u) = \emptyset$ for every $u = 1, \ldots, \on$. Let $I_v^3 \subset I^3$ such that $\rho: I_v^3 \rightarrow R_v \subset R$ be a parametrization of $R_v$.
\end{notation}

\begin{defn}
From Notation \ref{n.r.2}, we write $R = \bigcup_{v=1}^{\bar{m}}R_v$ as a disjoint union of regions. Hence we can write the path integral in Expression \ref{e.v.7} as
\begin{align*}
\frac{1}{\hat{Z}_{EH}}&\widetilde{\Tr}\int \tilde{V}_\kappa(\{B^j_\mu\}) \tilde{W}_\kappa(\{A^k_{\alpha\beta}\})\tilde{V}_{\kappa,R}(\{B_\mu^j\}) \cdot  e^{i\int_{\bR^4} A_0\cdot  B   -  A \cdot \tilde{B}}  D\Lambda \\
=&
\sum_{v=1}^{\bar{m}}\frac{1}{\hat{Z}_{EH}}\widetilde{\Tr}\int \tilde{V}_\kappa(\{B^j_\mu\}) \tilde{W}_\kappa(\{A^k_{\alpha\beta}\})\tilde{V}_{\kappa,R_v}(\{B_\mu^j\}) \cdot  e^{i\int_{\bR^4} A_0\cdot  B   -  A \cdot \tilde{B}}  D\Lambda.
\end{align*}

We now define the path integral \beq
\frac{1}{\hat{Z}_{EH}}\int \tilde{V}_\kappa(\{B^j_\mu\}) \tilde{W}_\kappa(\{A^k_{\alpha\beta}\})\tilde{V}_{\kappa,R}(\{B_\mu^j\}) \cdot  e^{i\int_{\bR^4} A_0\cdot  B   -  A \cdot \tilde{B}}  D\Lambda, \nonumber \eeq as
\begin{align}
q^2\prod_{\bar{u}=1}^{\on} \Bigg\{&  \Bigg[ \sum_{v=1}^{\bar{m}}\kappa^2\bk^5\sum_{u=1}^{\on} \int_{r\in I_v^3}dr |J_\rho|(r)\Bigg|\int_{I^2} d\hat{s}\  \left\langle m_\kappa(\tilde{\lambda}_\kappa(r))^{-1}[y_s^{u,\prime}p_\kappa^{\vec{y}_s^u}], \partial_0^{-1}p_\kappa^{\vec{\rho}(r)} \right\rangle  \nonumber \\
&\hspace{4.2cm} \cdot\  \left\langle \partial_0^{-1}p_\kappa^{\vec{y}_{\bar{s}}^{u}}, p_\kappa^{\vec{\rho}(r)} \right\rangle y_{\bar{s}}^{u,\prime}
\frac{\xi_{\rho_{u}^+}}{3}
\Bigg| \Bigg]^{1/\on} \Tr_{\rho_{\bar{u}}^+}\mathcal{W}^+_\kappa(q; \ol^{\bar{u}}, \uL) \nonumber\\
+&\Bigg[\sum_{v=1}^{\bar{m}} \kappa^2\bk^5 \sum_{u=1}^{\on} \int_{r\in I_v^3}dr |J_\rho|(r)\Bigg|\int_{I^2} d\hat{s}\  \left\langle \partial_0^{-1}p_\kappa^{\vec{y}_s^u}, m_\kappa(\tilde{\lambda}_\kappa(r))^{-1}y_s^{u,\prime}p_\kappa^{\vec{\rho}(r)} \right\rangle \nonumber \\
&\hspace{4.2cm} \cdot\  \left\langle \partial_0^{-1}p_\kappa^{\vec{y}_{\bar{s}}^{u}}, p_\kappa^{\vec{\rho}(r)} \right\rangle y_{\bar{s}}^{u,\prime}
\frac{\xi_{\rho_{u}^-}}{3}
\Bigg| \Bigg]^{1/\on} \Tr_{\rho_{\bar{u}}^-}\mathcal{W}^-_\kappa(q; \ol^{\bar{u}}, \uL)
\Bigg\}, \label{e.v.4}
\end{align}
from Expression \ref{e.v.8}. Notice that it is no longer necessary to keep track of the matrices, so we remove the time ordering operator.
\end{defn}

\begin{lem}
Refer to Notation \ref{n.k.1} and Equation (\ref{e.h.5}).
The limit of Expression \ref{e.v.4} as $\kappa$ goes to infinity, is given by computing the limit of
\begin{align}
 q^2\prod_{\bar{u}=1}^{\on}\Bigg\{&  \Bigg[ \sum_{v=1}^{\bar{m}}\tilde{\kappa}\sum_{u=1}^{\on} \int_{r\in I_v^3}dr |J_\rho|(r)\Bigg|\int_{I^2} d\hat{s}\  \epsilon^{ijk}\left\langle p_\kappa^{\vec{y}_{s}^u}, p_\kappa^{\vec{\rho}(r)} \right\rangle_k y_{i,s}^{u,\prime}y_{j,\bar{s}}^{u,\prime} \nonumber \\
&\hspace{2cm}\times e^{-\kappa^2|y_{\bar{s}}^{u} - \rho(r)|^2/8} \left\langle \partial_0^{-1}q_\kappa^{y_{0,\bar{s}}^{u}}, q_\kappa^{0} \right\rangle
\xi_{\rho_{u}^+}
\Bigg| \Bigg]^{1/\on} \Tr_{\rho_{\bar{u}}^+}\hat{\mathcal{W}}^+_\kappa(q; \ol^{\bar{u}}, \uL) \nonumber\\
+&\Bigg[ \sum_{v=1}^{\bar{m}} \tilde{\kappa}\sum_{u=1}^{\on} \int_{r\in I_v^3}dr |J_\rho|(r)\Bigg|\int_{I^2} d\hat{s}\  \epsilon^{ijk}\left\langle p_\kappa^{\vec{y}_{s}^u}, p_\kappa^{\vec{\rho}(r)} \right\rangle_k y_{i,s}^{u,\prime}y_{j,\bar{s}}^{u,\prime} \nonumber \\
&\hspace{2cm}\times e^{-\kappa^2|y_{\bar{s}}^{u} - \rho(r)|^2/8} \left\langle \partial_0^{-1}q_\kappa^{y_{0,\bar{s}}^{u}}, q_\kappa^{0} \right\rangle
\xi_{\rho_{u}^-}
\Bigg| \Bigg]^{1/\on} \Tr_{\rho_{\bar{u}}^-}\hat{\mathcal{W}}^-_\kappa(q; \ol^{\bar{u}}, \uL)
\Bigg\}, \label{ex.v.3}
\end{align}
as $\kappa$ goes to infinity. Here,
\beq \tilde{\kappa} = \frac{\sqrt\pi}{2}\frac{\kappa}{4}\left(\frac{\kappa}{\sqrt{2\pi}}\right)^2
\left(\frac{\kappa^{2} }{8\pi}\right)^2. \nonumber \eeq
\end{lem}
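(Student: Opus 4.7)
The plan is to match the $\kappa\to\infty$ limits of Expression \ref{e.v.4} and Expression \ref{ex.v.3} in four substeps: (i) replace $m_\kappa(\tilde\lambda_\kappa(r))^{-1}$ by $m_\kappa(0)^{-1}$ in the limit; (ii) expand the resulting inner product to produce the $\epsilon^{ijk}$-structure and identify it with Notation \ref{n.k.1}; (iii) factor the time-coordinate in the second Gaussian pairing to extract the exponential $e^{-\kappa^2|y_{\bar s}^u-\rho(r)|^2/8}$ together with the constant $\tilde\kappa$; and (iv) invoke Corollary \ref{c.x.1} to swap $\mathcal{W}_\kappa^\pm$ for $\hat{\mathcal{W}}_\kappa^\pm$. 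For step (i), I would adapt Lemma \ref{l.k.1} to the $\tilde\lambda_\kappa(r)$ of Definition \ref{d.vx.1}: since this quantity is structurally identical to $\tilde\lambda_\kappa^v(\bar s)$ with $\vec{\varrho}_{\bar s}^v$ replaced by $\vec{\rho}_r$, the identical Gaussian-decay estimate yields $\kappa\tilde\lambda_\kappa(r)\to 0$ uniformly on compact $r$-sets (using that $R$ is disjoint from $\oL$). By Remark \ref{r.m.1}, the limit operator $m_\kappa(0)^{-1}$ then acts on 3-vectors of scalar functions componentwise as $\kappa\partial_j^{-1}$, in the antisymmetric cross-product arrangement of Equation (\ref{e.m.3}).

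Next I would expand $\langle m_\kappa(0)^{-1}[y_s^{u,\prime}p_\kappa^{\vec{y}_s^u}],\partial_0^{-1}p_\kappa^{\vec{\rho}(r)}\rangle$ componentwise. For each $(i,j,k)\in C_3$, the $i$-th component is the antisymmetric pair $\kappa\langle\partial_j^{-1}p_\kappa^{\vec{y}_s^u},\partial_0^{-1}p_\kappa^{\vec{\rho}(r)}\rangle\,y_{k,s}^{u,\prime}-\kappa\langle\partial_k^{-1}p_\kappa^{\vec{y}_s^u},\partial_0^{-1}p_\kappa^{\vec{\rho}(r)}\rangle\,y_{j,s}^{u,\prime}$. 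Dotting this componentwise against the second 3-vector $\langle\partial_0^{-1}p_\kappa^{\vec{y}_{\bar s}^u},p_\kappa^{\vec{\rho}(r)}\rangle\,y_{\bar s}^{u,\prime}$, the antisymmetry collapses to the $\epsilon^{ijk}$-sum $\epsilon^{ijk}\langle p_\kappa^{\vec{y}_s^u},p_\kappa^{\vec{\rho}(r)}\rangle_k\,y_{i,s}^{u,\prime}y_{j,\bar s}^{u,\prime}$ of Notation \ref{n.k.1}, after invoking the skew-symmetry of $\partial_0^{-1}$ to re-locate the derivatives and factoring the 4D Gaussian pairing into its spatial-transverse, spatial-longitudinal, and time-coordinate pieces as prescribed in that notation.

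It then remains to handle the factor $\langle\partial_0^{-1}p_\kappa^{\vec{y}_{\bar s}^u},p_\kappa^{\vec{\rho}(r)}\rangle$. Using $\vec{\rho}(r)=(0,\rho(r))$, completing the square in each coordinate separately gives a purely spatial Gaussian product that evaluates to $e^{-\kappa^2|y_{\bar s}^u-\rho(r)|^2/8}$ up to normalization constants of the form $\kappa^2/(8\pi)$, multiplied by the 1D time integral $\langle\partial_0^{-1}q_\kappa^{y_{0,\bar s}^u},q_\kappa^0\rangle$. Aggregating the $\kappa^2\bk^5$ prefactor of Expression \ref{e.v.4}, the extra $\kappa$ from $m_\kappa(0)^{-1}$, and the $\kappa$-powers arising from each Gaussian normalization should collapse precisely to $\tilde\kappa=\tfrac{\sqrt\pi}{2}\cdot\tfrac{\kappa}{4}\cdot(\kappa/\sqrt{2\pi})^2\cdot(\kappa^2/(8\pi))^2$. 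Corollary \ref{c.x.1} then replaces $\mathcal{W}_\kappa^\pm$ by $\hat{\mathcal{W}}_\kappa^\pm$ inside each trace without changing the limit, yielding Expression \ref{ex.v.3}.

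The main obstacle I anticipate is the detailed bookkeeping of all $\kappa$-factors: the $\kappa^2\bk^5$ coefficient, the explicit $\kappa$ from $\kappa\partial_i^{-1}$, the implicit $\kappa\partial_0^{-1}q_\kappa$ hidden inside Notation \ref{n.k.1}, and the Gaussian normalization constants arising from each pairing of $p_\kappa$ or $q_\kappa$, so that they combine exactly into $\tilde\kappa$ rather than a nearby constant. A secondary subtlety is justifying that the $\kappa\to\infty$ limit commutes with the outer absolute value and the $1/\on$-th power; this should follow from uniform convergence of the integrands on compact parameter regions together with dominated convergence, but will require some care near the intersections of the parametrized curves $\vec{y}^u$ with the region $R$, where the large-$\kappa$ behaviour of the integrand is most delicate.
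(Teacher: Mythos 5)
Your proposal follows essentially the same route as the paper's own proof: it uses Lemma \ref{l.k.1} (extended to $\tilde\lambda_\kappa(r)$) to reduce $m_\kappa(\tilde\lambda_\kappa(r))^{-1}$ to $m_\kappa(0)^{-1}$ via Remark \ref{r.m.1}, expands the resulting cross-product pairing into the $\epsilon^{ijk}$-form of Notation \ref{n.k.1}, factors the Gaussian pairing through Lemma \ref{l.l.5} to extract $e^{-\kappa^2|y_{\bar s}^u-\rho(r)|^2/8}\langle\partial_0^{-1}q_\kappa^{y_{0,\bar s}^u},q_\kappa^0\rangle$ together with the constant $\tilde\kappa$, and finally invokes Corollary \ref{c.x.1} to replace $\mathcal{W}_\kappa^\pm$ by $\hat{\mathcal{W}}_\kappa^\pm$. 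This matches the paper's argument step for step, and your anticipated bookkeeping concerns are exactly where the paper's computation concentrates its effort.
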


\begin{proof}
From Lemma \ref{l.k.1}, we have seen that $\kappa\lambda_{\kappa}^v(\bar{s}) \rightarrow 0$ and $\kappa\tilde{\lambda}_{\kappa}^v(\bar{s}) \rightarrow 0$ as $\kappa \rightarrow \infty$ .

Now, \beq \kappa^3\left\langle p_\kappa^{\vec{\varrho}^{v}_{\bar{s}}}, \partial_0^{-1} (p_\kappa^{\vec{y}^u_s})\right\rangle = \kappa^2\left\langle p_\kappa^{\varrho^v_{\bar{s}}}, p_\kappa^{y^u_s} \right\rangle \cdot \kappa\left\langle q_\kappa^{\varrho^v_{0,\bar{s}}}, \partial_0^{-1}q_\kappa^{y^u_{0,s}} \right\rangle \nonumber \eeq
and \beq \kappa^3\left\langle \partial_0^{-1}p_\kappa^{\vec{y}_s^u}, p_\kappa^{\vec{\rho}(r)} \right\rangle = \kappa^2\left\langle p_\kappa^{y_s^u}, p_\kappa^{\rho(r)}\right\rangle \cdot \kappa\left\langle \partial_0^{-1}q_\kappa^{y_{0,s}^u}, q_\kappa^0 \right\rangle = \kappa^2 e^{-\kappa^2|y_{\bar{s}}^{u} - \rho(r)|^2/8} \cdot \kappa\left\langle \partial_0^{-1}q_\kappa^{y_{0,s}^u}, q_\kappa^0 \right\rangle, \nonumber \eeq and both converge to 0, using Lemma \ref{l.l.5}.

Refer to Definition \ref{d.lo.1}. From Item \ref{i.l.7} in Lemma \ref{l.l.5}, we see that
\begin{align*}
\left\langle \partial_0^{-1}p_\kappa^{\vec{y}_{\bar{s}}^{u}}, p_\kappa^{\vec{\rho}(r)} \right\rangle =& \left\langle \partial_0^{-1}q_\kappa^{y_{0,\bar{s}}^u}, q_\kappa^{0} \right\rangle \cdot \left\langle p_\kappa^{y_{\bar{s}}^u}, p_\kappa^{\rho(r)} \right\rangle = e^{-\kappa^2|y_{\bar{s}}^{u} - \rho(r)|^2/8}  \left\langle \partial_0^{-1}q_\kappa^{y_{0,\bar{s}}^{u}}, q_\kappa^{0} \right\rangle.
\end{align*}

From Remark \ref{r.m.1}, we see that
\begin{align*}
m_\kappa(0)^{-1}[y_s^{u,\prime}p_\kappa^{\vec{y}_s^u}] \cdot y_{\bar{s}}^u
=& \kappa\left[\partial_2^{-1}p_\kappa^{\vec{y}_s^u}y_{3,s}^{u,\prime} - \partial_3^{-1}p_\kappa^{\vec{y}_{2,s}^u}y_{2,s}^{u,\prime}\right]y_{1,\bar{s}}^{u,\prime} + \kappa\left[\partial_3^{-1}p_\kappa^{\vec{y}_s^u}y_{1,s}^{u,\prime} - \partial_1^{-1}p_\kappa^{\vec{y}_{3,s}^u}y_{3,s}^{u,\prime}\right]y_{2,\bar{s}}^{u,\prime}\\
&+ \kappa\left[\partial_1^{-1}p_\kappa^{\vec{y}_s^u}y_{2,s}^{u,\prime} - \partial_2^{-1}p_\kappa^{\vec{y}_{1,s}^u}y_{1,s}^{u,\prime}\right]y_{3,\bar{s}}^{u,\prime}.
\end{align*}
Thus,
\begin{align*}
\left\langle m_\kappa(0)^{-1}[y_s^{u,\prime}p_\kappa^{\vec{y}_s^u}], \partial_0^{-1}p_\kappa^{\vec{\rho}(r)} \right\rangle \cdot y_{\bar{s}}^u =& \left\langle p_\kappa^{\vec{\rho}(r)}, p_\kappa^{\vec{y}_{s}^u}  \right\rangle_k [y_s^{u,\prime}\times   y_{\bar{s}}^{u,\prime}]^k
\equiv \left\langle p_\kappa^{\vec{y}_{s}^u}, p_\kappa^{\vec{\rho}(r)}  \right\rangle_k [y_s^{u,\prime}\times   y_{\bar{s}}^{u,\prime}]^k,
\end{align*}
using Notation \ref{n.k.1}.

Since $\kappa\lambda_\kappa,\ \kappa\tilde{\lambda}_\kappa \rightarrow 0$, the limit of Expressions \ref{ex.v.1} and \ref{ex.v.2} are equivalent to compute the limits of
\begin{align*}
&q^4\kappa^4\bk^4\left|\sum_{u=1}^{\on} \int_{I^2} d\hat{s}\ \  \left\langle p_\kappa^{\vec{y}_{s}^u}, p_\kappa^{\vec{\rho}(r)} \right\rangle_k [y_s^{u,\prime}\times   y_{\bar{s}}^{u,\prime}]^k e^{-\kappa^2|y_{\bar{s}}^{u} - \rho(r)|^2/8}  \left\langle \partial_0^{-1}q_\kappa^{y_{0,\bar{s}}^{u}}, q_\kappa^{0} \right\rangle\otimes \frac{\xi_{\rho_{u}^\pm}}{3}\right|^2  \\
&=q^4\kappa^4\bk^4\left|\sum_{u=1}^{\on} \int_{I^2} d\hat{s}\  \epsilon^{ijk}\left\langle p_\kappa^{\vec{y}_{s}^u}, p_\kappa^{\vec{\rho}(r)} \right\rangle_k y_{i,s}^{u,\prime}y_{j,\bar{s}}^{u,\prime} e^{-\kappa^2|y_{\bar{s}}^{u} - \rho(r)|^2/8}  \left\langle \partial_0^{-1}q_\kappa^{y_{0,\bar{s}}^{u}}, q_\kappa^{0} \right\rangle\otimes \frac{\xi_{\rho_{u}^\pm}}{3}\right|^2
\end{align*}
respectively as $\kappa$ goes to infinity.

Replace Expressions \ref{ex.v.1} and \ref{ex.v.2} in Expression \ref{e.v.4} with the above expressions and using the proof in Corollary \ref{c.x.1}, we will obtain our result.
\end{proof}

\begin{defn}(Volume Path Integral)\label{d.vpi}\\
We define Expression \ref{e.v.5} as the limit as $\kappa$ goes to infinity, of Expression \ref{ex.v.3}.
\end{defn}

\begin{rem}
The path integral in Expression \ref{e.v.4} will of course depend on the choice of partition $\{R_v\}_{v=1}^{\bar{m}}$. But its limit as $\kappa$ goes to infinity will be shown to be independent of this partition in a sequel.
\end{rem}

\section{Curvature Path Integral}\label{s.co}

Because curvature is a two form, we need to choose a surface $S$ and integrate curvature over it. Now, $S$ should be orientable, closed and bounded, with or without boundary, and disjoint from $\uL$. We allow $S$ to be disconnected, with finite number of components. Furthermore, we insist the link $\pi_a(\uL)$ intersects $\pi_a(S)$ at most finitely many number of points inside $\pi_a(S)$.

\begin{notation}
Let \beq \partial_0A_{\alpha\beta}^i \equiv \frac{\partial A_{\alpha\beta}^i}{\partial x_0},\ \ \partial_iA_{\alpha\beta}^j \equiv \frac{\partial A_{\alpha \beta}^j}{\partial x_i}. \nonumber \eeq
\end{notation}

In terms of $\{A_{\alpha\beta}^i\}$, we define
\begin{align}
F_S(\{A^k_{\alpha\beta} \})
:=& \frac{1}{2}\int_S \partial_0A_{\alpha\beta}^i \otimes dx_0 \wedge dx_i \otimes \hat{E}^{\alpha\beta} + \frac{1}{2}\sum_{i=1}^3\int_S \partial_iA_{\alpha\beta}^j \otimes dx_i \wedge dx_j \otimes \hat{E}^{\alpha \beta} \nonumber \\
&+ \frac{1}{2}\int_S A_{\alpha\beta}^iA_{\gamma\mu}^j\otimes dx_i \wedge dx_j \otimes [\hat{E}^{\alpha\beta}, \hat{E}^{\gamma\mu}] \nonumber\\
:=& F_S^1(\{A^k_{\alpha\beta}\}) + F_S^2(\{A^k_{\alpha\beta}\}) + F_S^3(\{A^k_{\alpha\beta}\}). \nonumber
\end{align}
Note that we do not specify any representation for $\hat{E}^{\alpha\beta}$.

Refer to Notation \ref{n.x.1}. We are going to define the following curvature path integral, given by
\beq
\frac{1}{Z_{EH}}\int F_S(\{A^k_{\alpha\beta}\}) V( \{\ul^v\}_{v=1}^{\un})(\{B^i_\mu\})W(q; \{\ol^u, \rho_u\}_{u=1}^{\on})(\{A^k_{\alpha\beta}\}) e^{i\int_{\bR^4} A_0\cdot  B \vec{\times} B   -  A \cdot \tilde{B}}  D\Lambda, \label{e.f.1} \eeq where \beq Z_{EH} = \int \exp\left[i\int_{\bR^4} \partial_0 A_0\cdot B \vec{\times} B   - \partial_0 A \cdot \tilde{B}\right] D\Lambda. \nonumber \eeq

We will now make use of Chern-Simon rules given in Definition \ref{d.cs.r.1} to make sense of the path integral in Expression \ref{e.f.1}, as before

\begin{rem}
\begin{enumerate}
  \item When $S$ is the empty set, we define $F_\emptyset \equiv 1$, so we write Expression \ref{e.f.1} as $Z(q; \chi(\oL, \uL))$, which was termed as the Wilson Loop observable of the colored hyperlink $\chi(\oL, \uL)$ in Remark \ref{r.ax.1}.
  \item The path integral will take values in $\mathfrak{su}(2) \times \mathfrak{su}(2)$.
\end{enumerate}
\end{rem}

\subsection{$F_S^1$ Path Integral}\label{ss.fpi.1a}

We will first define the path integral \beq
\frac{1}{Z_{EH}}\int F_S^1(\{A^k_{\alpha\beta}\})  V( \{\ul^v\}_{v=1}^{\un})(\{B^i_\mu\})W(q; \{\ol^u, \rho_u\}_{u=1}^{\on})(\{A^k_{\alpha\beta}\}) e^{i\int_{\bR^4} \partial_0 A_0 \cdot B \vec{\times} B   -  A \cdot \tilde{B}}  D\Lambda, \label{e.f.1a} \eeq which we will denote it as $\hat{F}_S^1[Z(q; \chi(\oL, \uL))]$.

Recall we defined $\tilde{V}(\{B^j_\mu\})$ and $\tilde{W}(\{A^k_{\alpha\beta}\})$ respectively in Equations (\ref{e.w.2}) and (\ref{e.w.3}).

\begin{lem}\label{l.f.1}
After doing a change of variables given in Notation \ref{n.c.1}, the path integral in Expression (\ref{e.f.1a}) is defined as \beq
\frac{1}{\hat{Z}_{EH}}\int \bar{F}_S^1(\{A^k_{\alpha\beta}\})\tilde{V}(\{B^j_\mu\}) \tilde{W}(\{A^k_{\alpha\beta}\})  e^{i\int_{\bR^4} A_0\cdot  B   -  A \cdot \tilde{B}}  D\Lambda \label{e.f.2} \eeq with \beq
\hat{Z}_{EH} = \int \exp\left[i\int_{\bR^4} A_0\cdot  B   -  A \cdot \tilde{B}\right]
D\Lambda, \nonumber \eeq after applying Steps \ref{d.cs.r.1a} and \ref{d.cs.r.1b} in Definition \ref{d.cs.r.1}. Here, $\bar{F}_S^1(\{A^k_{\alpha\beta}\})$ is defined in Equation (\ref{e.f.1b}).

\end{lem}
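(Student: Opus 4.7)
The plan is to mimic exactly the argument of Lemma \ref{l.x.1}, the only new ingredient being how the change of variables $A^k_{\alpha\beta} \mapsto \partial_0^{-1} A^k_{\alpha\beta}$ interacts with the explicit $\partial_0$ sitting inside $F_S^1$. First I would rewrite the surface integrand in the Dirac-delta form dictated by Step \ref{d.cs.r.1a}: parametrizing $S$ by $\vec{\sigma}: I^2 \to \bR^4$ and using the Jacobian components $J_{0i}(\hat{t})$ from Notation \ref{n.s.3}, one has
\[
F_S^1(\{A^k_{\alpha\beta}\}) = \frac{1}{2}\int_{I^2}\left\langle \partial_0 A^i_{\alpha\beta}, \delta^{\vec{\sigma}(\hat{t})}\right\rangle J_{0i}(\hat{t})\, d\hat{t} \otimes \hat{E}^{\alpha\beta}.
\]

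Next, I would carry out the change of variables of Step \ref{d.cs.r.1b}. The $e$-sector substitutions $B \mapsto m(B)^{-1}B$ and $B_0 \mapsto m(B_i)^{-1}\tilde{B}^i$ produce $\tilde{V}$ from $V(\{\ul^v\})$ exactly as in Lemma \ref{l.x.1}, and have no bearing on $F_S^1$ since the latter depends only on $A^k_{\alpha\beta}$. The $\omega$-sector substitution $A^k_{\alpha\beta} \mapsto \partial_0^{-1} A^k_{\alpha\beta}$ converts $W$ into $\tilde{W}$ using the skew-symmetry of $\partial_0^{-1}$ (integration by parts). The essential new observation is that applied to $F_S^1$, the very same substitution collapses $\partial_0 A^i_{\alpha\beta}$ to $\partial_0 \partial_0^{-1} A^i_{\alpha\beta} = A^i_{\alpha\beta}$, so that the candidate
\[
\bar{F}_S^1(\{A^k_{\alpha\beta}\}) := \frac{1}{2}\int_{I^2}\left\langle A^i_{\alpha\beta}, \delta^{\vec{\sigma}(\hat{t})}\right\rangle J_{0i}(\hat{t})\, d\hat{t} \otimes \hat{E}^{\alpha\beta}
\]
is the image of $F_S^1$ under the change of variables. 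Assembling these transformations, and noting that the Einstein-Hilbert action $\int \partial_0 A_0\cdot B \vec{\times} B - \partial_0 A \cdot \tilde{B}$ becomes $\int A_0\cdot B - A \cdot \tilde{B}$ under the same substitutions (this is already established implicitly in Lemma \ref{l.x.1}), Expression \ref{e.f.1a} takes the form asserted in Expression \ref{e.f.2}.

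The main point requiring care — rather than a genuine obstacle — is verifying that the substitution $A \mapsto \partial_0^{-1} A$ can be applied consistently in every place where $A$ appears: inside the Wilson loop functional $W$, where skew-symmetry of $\partial_0^{-1}$ lets one transfer the operator onto the Dirac-delta; inside $F_S^1$, where $\partial_0 \partial_0^{-1}$ cancels on the nose; and inside the exponential weight, where the $\partial_0$ in the action is absorbed. All three points are routine once skew-symmetry of $\partial_0^{-1}$ and the identity $\partial_0 \partial_0^{-1} = \mathrm{id}$ are in hand, so no difficulty arises beyond those already resolved in Lemma \ref{l.x.1}.
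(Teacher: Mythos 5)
Your proposal is correct and follows essentially the same route as the paper: the paper's proof likewise observes that the substitution $A^k_{\alpha\beta}\mapsto \partial_0^{-1}A^k_{\alpha\beta}$ collapses the $\partial_0 A^i_{\alpha\beta}$ in $F_S^1$ to $A^i_{\alpha\beta}$ (yielding $\bar{F}_S^1$) and then defers the treatment of $V$, $W$ and the action entirely to Lemma \ref{l.x.1}. The only cosmetic difference is that the paper writes $\bar{F}_S^1$ with the sum split over $\hat{E}^{0j}$ and $\hat{E}^{\tau(j)}$ (absorbing the factor $\tfrac{1}{2}$ via the antisymmetry $A^i_{\alpha\beta}=-A^i_{\beta\alpha}$, $\hat{E}^{\alpha\beta}=-\hat{E}^{\beta\alpha}$), which is equivalent to your form.
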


\begin{proof}
In Lemma \ref{l.x.1}, we replace $\partial_0A_0$ with $A_0$ and $\partial_0 A$ with $A$ respectively. Recall we define $J_{\alpha\beta}$ in Notation \ref{n.s.3}. Thus define
\begin{align}
F_S^1(\{A^k_{\alpha\beta}\}) \longmapsto& \int_{I^2}\ d\hat{t} \left\langle A_{0j}^i, \delta^{\vec{\sigma}(\hat{t})} \right\rangle J_{0i}(\hat{t}) \otimes \hat{E}^{0j} \nonumber \\
&\hspace{2cm}+ \left\langle A_{\tau(j)}^i, \delta^{\vec{\sigma}(\hat{t})} \right\rangle  J_{0i}(\hat{t})) \otimes \hat{E}^{\tau(j)} =: \bar{F}_S^1(\{A^k_{\alpha\beta}\}). \label{e.f.1b}
\end{align}

The rest of the proof is exactly the same as the proof in Lemma \ref{l.x.1}, which will give us Expression \ref{e.f.2}.
\end{proof}

As explained in the earlier sections, $m(B(\vec{x}))^{-1}$, $m(B_i(\vec{x}))^{-1}$ cannot be defined. To make sense of the path integral given by Expression \ref{e.f.2}, we will make the following approximations as done in Sections \ref{s.ao} and \ref{s.vo}. We approximate the Dirac-delta function with $p_\kappa$, $m(B)$ with $m_\kappa(B)$ and $m(B_i)$ with $m_\kappa(B_i)$.

With the above approximations, the path integral that was obtained previously is of the form \beq
\frac{1}{\hat{Z}_{EH}}\int F(\{B_i^j\}, \{\tilde{B}_{i}^j\})e^{\langle A_{0i}^j, \alpha_j^i \rangle}e^{\langle A_{\tau(i)}^j, \beta_j^i \rangle} e^{i\int_{\bR^4} A_0\cdot  B   -  A \cdot \tilde{B}}  D\Lambda, \label{e.x.2} \eeq whereby $F$ is some continuous function dependent on the variables $B_i^j$ and $\tilde{B}_i^j$, and $\alpha_i^j$, $\beta_i^j$ are in $C^\infty(\bR^4)$. 

According to the rules of the Chern-Simons integral given in Definition \ref{d.cs.r.1}, we substitute $B_i^j$ with $\alpha_i^j$ and $\tilde{B}_i^j$ with $\beta_i^j$ respectively inside the function $F$, and we will obtain the formula for the path integral. More importantly, in Sections \ref{s.ao} and \ref{s.vo}, we replace
\begin{align}
\left[m_\kappa(\langle B, \delta^{\vec{\varrho}^v_{\bar{s}}} \rangle )\right]^{-1} \longmapsto & m_\kappa(\lambda_\kappa^v(\bar{s}))^{-1}, \label{e.x.3a}\\
\left[m_\kappa(\langle B_i, \delta^{\vec{\varrho}^v_{\bar{s}}} \rangle )\right]^{-1} \longmapsto & m_\kappa(\tilde{\lambda}_\kappa^v(\bar{s}))^{-1}. \label{e.x.3b}
\end{align}

However, if one look at Expression \ref{e.f.2}, it is not in the form given by Expression \ref{e.x.2}, even after we replace $m(B(\vec{\varrho}_{\bar{s}}^v))^{-1}$ and $m(B_i(\vec{\varrho}_{\bar{s}}^v))^{-1}$ with $m_\kappa(B(\vec{\varrho}_{\bar{s}}^v))^{-1}$ and $m_\kappa(B_i(\vec{\varrho}_{\bar{s}}^v))^{-1}$ respectively.

Refer to Notation \ref{n.n.3}. To define the above path integral, we make an approximation to $m(B(\vec{\varrho}^v_{\bar{s}}) )$ and $m(B_i (\vec{\varrho}^v_{\bar{s}}) )$. However, unlike the approximation we made in Sections \ref{s.ao} and \ref{s.vo}, to simplify the integral, we approximate it with $m_\kappa(\lambda_\kappa^v(\bar{s}))$ and $m_\kappa(\tilde{\lambda}_\kappa^v(\bar{s}))$ respectively. Specifically, we will replace $m(B(\vec{\varrho}_{\bar{s}}^v))^{-1}$ and $m(B_i(\vec{\varrho}_{\bar{s}}^v))^{-1}$ with the substitutions given by Equations (\ref{e.x.3a}) and (\ref{e.x.3b}) respectively. As mentioned earlier, the reason is Expression \ref{e.f.2} is not a Chern-Simons integral. To facilitate computations, we will first make this approximation.

With these approximations, we replace $\tilde{W}$ and $\tilde{V}$ with $\bar{W}_\kappa$ and $\bar{V}_\kappa$ respectively in Definition \ref{d.f.1}.

\begin{defn}\label{d.f.1}
Define
\begin{align}
\bar{V}_\kappa(\{B^j_\mu\})
&:= \exp\Bigg\{ \sum_{j=1}^3\left\langle B^j, -\bk\sum_{v=1}^{\underline{n}} \int_0^1 \ d\bar{s}\ \left\{m_\kappa(\lambda_\kappa^v(\bar{s}))^{-1} [\varrho_{\bar{s}}^{v,\prime}p_\kappa^{\vec{\varrho}^v_{\bar{s}}}]\right\}^j \right\rangle \nonumber\\
& \hspace{2cm} + \sum_{i=1}^3\left\langle \tilde{B}^{i}, -\bk\sum_{v=1}^{\underline{n}} \int_0^1\ d\bar{s}\ m_\kappa(\tilde{\lambda}_\kappa^v(\bar{s}))^{-1}\left[\varrho_{\bar{s}}^{v,\prime}p_\kappa^{\vec{\varrho}^v_{\bar{s}}} \right]\right\rangle
\Bigg\}, \label{e.b.3}\\
\bar{W}_\kappa(\{A^k_{\alpha\beta}\})
&  := \prod_{u=1}^{\on} \Tr_{\rho_u}  \mathcal{T} \exp\Bigg[\left\langle A^k_{0i}, -q\bk\kappa\int_0^1 ds\  \partial_0^{-1} p_\kappa^{\vec{y}^u_s} y^{u,\prime}_{k,s}\right\rangle \otimes \hat{E}^{0i}_s \nonumber\\
& \hspace{3cm}+ \left\langle A^k_{\tau(j)}, -q\bk\kappa\int_0^1 ds\  \partial_0^{-1}p_\kappa^{\vec{y}^u_s}\right\rangle y^{u,\prime}_{k,s}\otimes \hat{E}^{\tau(j)}_s
\Bigg]. \nonumber
\end{align}
\end{defn}

\begin{rem}
\begin{enumerate}
  \item We add in factors of $\bk$ as before.
  \item Also refer to Remark \ref{r.ax.2}.
\end{enumerate}
\end{rem}

Recall from Step \ref{d.cs.r.1d} in Definition \ref{d.cs.r.1}, we need to scale the surface integral with a factor of $\bk^2$. Thus we replace $\bar{F}_S^1(\{A^k_{\alpha\beta}\})$ with
\begin{align*}
\bar{F}_{\kappa,S}^1(\{A^k_{\alpha\beta}\}):=&
\bk^2\int_{I^2} \langle A_{0j}^i, p_\kappa^{\vec{\sigma}(\hat{t})} \rangle  J_{0i}(\hat{t})d\hat{t}\otimes \hat{E}^{0j} + \langle A_{\tau(j)}^i, p_\kappa^{\vec{\sigma}(\hat{t})} \rangle  J_{0i}(\hat{t}) d\hat{t}\otimes \hat{E}^{\tau(j)} \\
\equiv& \left\langle A_0^i, \int_{I^2}\bk^2 p_\kappa^{\vec{\sigma}(\hat{t})}J_{0i}(\hat{t})d\hat{t}\otimes \widetilde{E} \right\rangle
+ \left\langle A_{\tau(j)}^i, \int_{I^2}\bk^2 p_\kappa^{\vec{\sigma}(\hat{t})}J_{0i}(\hat{t})d\hat{t}\otimes \hat{E}^{\tau(j)} \right\rangle.
\end{align*}

Hence we approximate our path integral in Expression \ref{e.f.2} with
\beq
\frac{1}{\hat{Z}_{EH}}\int \bar{F}_{\kappa,S}^1(\{A^k_{\alpha\beta}\})\bar{V}_\kappa(\{B^j_\mu\}) \bar{W}_\kappa(\{A^k_{\alpha\beta}\})  e^{i\int_{\bR^4} A_0\cdot  B   -  A \cdot \tilde{B}}  D\Lambda \label{e.f.3}. \eeq This completes Steps \ref{d.cs.r.1c} and \ref{d.cs.r.1d} in Definition \ref{d.cs.r.1}.

\begin{lem}
Recall $\bar{E} = (1,1,1)$ and $\widetilde{E}$ was defined in Definition \ref{d.n.3}. Refer to Definition \ref{d.w.1} where $\mathcal{W}_\kappa^\pm(q; \oL, \ul^v)$ was defined. Apply Step \ref{d.cs.r.1e} in Definition \ref{d.cs.r.1}, the path integral in Expression \ref{e.f.3} is hence computed as
\begin{align}
&\Bigg\{-i \bk\sum_{v=1}^{\un}\left\langle \int_I\ d\bar{s} \left\{ \left[m_\kappa(\lambda_\kappa^v(\bar{s}) )\right]^{-1} (\varrho_{\bar{s}}^{v,\prime} p_\kappa^{\vec{\varrho}^{v}_{\bar{s}}}) \right\}^i , \int_{I^2} \bk^2p_\kappa^{\vec{\sigma}(\hat{t})} J_{0i}(\hat{t})d\hat{t}\otimes \widetilde{E}\right\rangle \nonumber \\
&+ i\bk\sum_{v=1}^{\un}\left\langle \int_I\ d\bar{s}\left\{ \left[m_\kappa(\tilde{\lambda}_\kappa^v(\bar{s}) )\right]^{-1} (\varrho_{\bar{s}}^{v,\prime} p_\kappa^{\vec{\varrho}^{v}_{\bar{s}}}) \right\}^k , \int_{I^2} \bk^2p_\kappa^{\vec{\sigma}(\hat{t})} J_{0k}(\hat{t})d\hat{t} \right\rangle \otimes \sum_{j=1}^3\hat{E}^{\tau(j)} \Bigg\}Z(\kappa, q; \chi(\oL, \uL)) .\label{e.z.2}
\end{align}
\end{lem}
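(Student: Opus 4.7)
The plan is to apply the Chern--Simons rule (Step \ref{d.cs.r.1e} of Definition \ref{d.cs.r.1}, i.e.\ Definition \ref{d.cs.1}) to Expression \ref{e.f.3}, treating $\bar{F}_{\kappa,S}^1(\{A^k_{\alpha\beta}\})$ as a \emph{linear prefactor} in the variable $A$, which sits outside the Gaussian structure provided by $\bar{V}_\kappa\bar{W}_\kappa e^{i\int A_0\cdot B - A\cdot\tilde B}$. From Equation (\ref{e.f.1b}), after the approximation $\delta^{\vec{\sigma}(\hat t)}\mapsto p_\kappa^{\vec{\sigma}(\hat t)}$ and the scaling by $\bk^2$, we have
\[
\bar{F}_{\kappa,S}^1 = \left\langle A_0^i,\, \int_{I^2}\bk^2 p_\kappa^{\vec{\sigma}(\hat t)} J_{0i}(\hat t)\,d\hat t\otimes \widetilde{E}\right\rangle + \left\langle A_{\tau(j)}^i,\, \int_{I^2}\bk^2 p_\kappa^{\vec{\sigma}(\hat t)} J_{0i}(\hat t)\,d\hat t\otimes \hat{E}^{\tau(j)}\right\rangle,
\]
which displays it as a sum of two inner products of the form $\langle A, F_S\rangle$.

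The key observation is that such a linear prefactor can be produced by differentiating an exponential: $\langle w_+, F\rangle\, e^{\langle w_+, \alpha_+\rangle} = \partial_t|_{t=0}\, e^{\langle w_+, \alpha_+ + tF\rangle}$. Applying Definition \ref{d.cs.1} to the augmented exponent $\alpha_+ \mapsto \alpha_+ + tF$ and then differentiating at $t=0$ shows that, under the CS substitution, a linear prefactor $\langle w_+, F\rangle$ contributes exactly $i\langle F, \alpha_-\rangle$ times the corresponding $F$-free Chern--Simons integral, where $\alpha_-$ is the coefficient of the conjugate variable $w_-$ supplied by $\bar{V}_\kappa$.

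Applying this observation to each of the two pieces of $\bar{F}_{\kappa,S}^1$: the conjugate variable to $A_{0j}^i$ through the action $i\int A_0\cdot B$ is $B_j^i$, whose coefficient in $\bar{V}_\kappa$ is (from Equation (\ref{e.b.3})) $-\bk\sum_v\int d\bar s\,\{m_\kappa(\lambda_\kappa^v(\bar s))^{-1}(\varrho_{\bar s}^{v,\prime}p_\kappa^{\vec{\varrho}^v_{\bar s}})\}_j^i$, which produces the first term in braces of \ref{e.z.2} with overall sign $i\cdot(-1)=-i$. The conjugate variable to $A_{\tau(j)}^i$ through $-i\int A\cdot\tilde B$ is $-\tilde B^i$ (equivalently, working with $\tilde B'=-\tilde B$ absorbs the action sign), whose coefficient in $\bar{V}_\kappa$ is then $+\bk\sum_v\int d\bar s\, m_\kappa(\tilde\lambda_\kappa^v(\bar s))^{-1}(\varrho_{\bar s}^{v,\prime}p_\kappa^{\vec{\varrho}^v_{\bar s}})$, producing the second term with sign $+i$. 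Together, the prefactor integrates to exactly the quantity in braces in Expression \ref{e.z.2}.

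The remaining factor is the $\bar{F}^1$-free normalized Chern--Simons integral
\[
\frac{1}{\hat{Z}_{EH}}\int \bar{V}_\kappa(\{B^j_\mu\})\,\bar{W}_\kappa(\{A^k_{\alpha\beta}\})\, e^{i\int_{\bR^4}A_0\cdot B - A\cdot\tilde B}\,D\Lambda,
\]
which is evaluated by precisely the steps of Lemma \ref{l.x.2} specialized to $S=\emptyset$: after Steps \ref{d.cs.r.1c}--\ref{d.cs.r.1e} one obtains the block-diagonal $\widetilde{\Tr}$-expression assembling into $Z(\kappa, q;\chi(\oL,\uL))$ of Equation (\ref{e.z.1}). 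Multiplying the prefactor contribution by this Wilson loop observable yields \ref{e.z.2}. The main bookkeeping obstacle is to verify that the tensor factors $\widetilde{E}$ and $\hat{E}^{\tau(j)}$ in $\bar{F}_{\kappa,S}^1$, being outside the traces/time-ordering in $\bar{W}_\kappa$, do not participate in $\mathcal T$ and emerge intact as a scalar-valued multiplier of $Z(\kappa,q;\chi(\oL,\uL))$; and that the index matching between $A_{0j}^i$ and $B_j^i$ (resp.\ $A_{\tau(j)}^i$ and $\tilde B^i$ components) reproduces the precise $\{\cdot\}^i$ and $\{\cdot\}^k$ index placement inside the inner products of \ref{e.z.2}.
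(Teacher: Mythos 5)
Your proposal is correct and follows essentially the same route as the paper: the paper simply applies the Step \ref{d.cs.r.1e} substitution $(A_{01}^i,A_{02}^i,A_{03}^i)\mapsto -i\bk\sum_{v}\int_I d\bar{s}\,\{[m_\kappa(\lambda_\kappa^v(\bar{s}))]^{-1}(\varrho_{\bar{s}}^{v,\prime}p_\kappa^{\vec{\varrho}^v_{\bar{s}}})\}^i$ and $A^i_{\tau(j)}\mapsto +i\bk\sum_v\int_I d\bar{s}\,\{[m_\kappa(\tilde{\lambda}_\kappa^v(\bar{s}))]^{-1}(\varrho_{\bar{s}}^{v,\prime}p_\kappa^{\vec{\varrho}^v_{\bar{s}}})\}^i$ simultaneously inside $\bar{W}_\kappa$ (yielding $Z(\kappa,q;\chi(\oL,\uL))$ as in Lemma \ref{l.x.2}) and inside the linear functional $\bar{F}^1_{\kappa,S}$ (yielding the bracketed prefactor), which is exactly what your derivative-of-the-exponential argument reproduces, with the same signs $-i$ and $+i$ coming from the relative sign in $i\int A_0\cdot B - A\cdot\tilde{B}$. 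Your differentiation trick is a slightly more explicit justification of why a linear prefactor transforms this way, but it is not a different method.
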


\begin{proof}
From Equation (\ref{e.b.3}), according to the rules of Definition \ref{d.cs.r.1} of the Chern-Simons path integral, we now replace
\begin{align}
\begin{split}
(A_{01}^i,  A_{02}^i,  A_{03}^i)  \longmapsto& -i\bk\sum_{v=1}^{\un}\int_I\ d\bar{s}\left\{\left[m_\kappa(\lambda_\kappa^v(\bar{s}) )\right]^{-1}( \varrho_{\bar{s}}^{v,\prime} p_\kappa^{\vec{\varrho}^{v}_{\bar{s}}} ) \right\}^i, \\
A_{\tau(j)}^i  \longmapsto& i\bk\sum_{v=1}^{\un}\int_I\ d\bar{s}\left\{\left[m_\kappa(\tilde{\lambda}_\kappa^v(\bar{s}) )\right]^{-1} (\varrho_{\bar{s}}^{v,\prime} p_\kappa^{\vec{\varrho}^{v}_{\bar{s}}}) \right\}^i,
\end{split}
\label{e.x.5}
\end{align}
inside the functionals $\bar{W}_\kappa$ and $\bar{F}_{\kappa,S}^1$. Note that $\int_I\ ds\left\{\left[m_\kappa(\lambda_\kappa^v(s) )\right]^{-1} (\varrho_{\bar{s}}^{v,\prime} p_\kappa^{\vec{\varrho}^{v}_s}) \right\}^i$ is a 3-vector.

It can be shown that the substitution inside $\bar{W}_\kappa$ will yield $Z(\kappa, q; \chi(\oL, \uL))$, by using a similar argument in Lemma \ref{l.x.2}. Notice that it is now no longer necessary to have the time ordering operator. So, we will focus on making the substitution inside $\bar{F}_{\kappa,S}^1$. After the substitution, we obtain the term
\begin{align}
-&i \bk\left\langle \sum_{v=1}^{\un}\int_I\ d\bar{s}\left\{\left[m_\kappa(\lambda_\kappa^v(\bar{s}) )\right]^{-1}( \varrho_{\bar{s}}^{v,\prime} p_\kappa^{\vec{\varrho}^{v}_{\bar{s}}} ) \right\}^i , \int_{I^2} \bk^2p_\kappa^{\vec{\sigma}(\hat{t})} J_{0i}(\hat{t})d\hat{t}\otimes \widetilde{E}\right\rangle \nonumber \\
&+ i\bk\left\langle \sum_{v=1}^{\un}\int_I\ d\bar{s}\left\{\left[m_\kappa(\tilde{\lambda}_\kappa^v(\bar{s}) )\right]^{-1} (\varrho_{\bar{s}}^{v,\prime} p_\kappa^{\vec{\varrho}^{v}_{\bar{s}}}) \right\}^k , \int_{I^2} \bk^2p_\kappa^{\vec{\sigma}(\hat{t})} J_{0k}(\hat{t})d\hat{t} \right\rangle \otimes \sum_{j=1}^3\hat{E}^{\tau(j)}. \label{e.z.3}
\end{align}
This completes the proof.
\end{proof}

\begin{rem}
Refer to Remark \ref{r.p.2} for an explanation of Expression \ref{e.z.3}.
\end{rem}

Using Lemma \ref{l.k.1}, we see that to compute the limit of the first and second term in Expression \ref{e.z.3}, it suffices to compute the limit of
\begin{align}
\bar{A}_\kappa^\pm := \mp\frac{i\kappa^3}{32\pi\sqrt{4\pi}}\sum_{v=1}^{\un}\int_{I^3}\ \Bigg\{ &\left\langle  p_\kappa^{\vec{\sigma}(\hat{t})}, \kappa\left[\varrho^{v,\prime}_{3,s}\partial_2^{-1} - \varrho^{v,\prime}_{2,s}\partial_3^{-1}\right]p_\kappa^{\vec{\varrho}^{v}_s}\right\rangle J_{01}(\hat{t})\nonumber \\
+& \left\langle p_\kappa^{\vec{\sigma}(\hat{t})}, \kappa\left[\varrho^{v,\prime}_{1,s}\partial_3^{-1} - \varrho^{v,\prime}_{3,s}\partial_1^{-1}\right]p_\kappa^{\vec{\varrho}^{v}_s}\right\rangle J_{02}(\hat{t})\nonumber\\
+&\left\langle p_\kappa^{\vec{\sigma}(\hat{t})}, \kappa\left[\varrho^{v,\prime}_{2,s}\partial_1^{-1} - \varrho^{v,\prime}_{1,s}\partial_2^{-1}\right]p_\kappa^{\vec{\varrho}^{v}_s}\right\rangle J_{03}(\hat{t})
\Bigg\}\ ds d\hat{t} \otimes \mathcal{F}^{\pm}\label{e.c.4a}
\end{align}
respectively, as $\kappa$ goes to infinity.

From the proof of Lemma \ref{c.x.1}, we see that \beq \lim_{\kappa \rightarrow \infty }Z(\kappa, q; \chi(\oL, \uL)) = \lim_{\kappa \rightarrow \infty}\prod_{u=1}^{\on}\left[ \Tr_{\rho_u^+}\ \hat{\mathcal{W}}_\kappa^+(q; \ol^u, \uL) + \Tr_{\rho_u^-}\ \hat{\mathcal{W}}_\kappa^-(q; \ol^u, \uL) \right]. \label{e.x.4} \eeq

\begin{defn}\label{d.x.1}
We define Expression \ref{e.f.1a} as \beq \lim_{\kappa \rightarrow \infty}\left[ \bar{A}_\kappa^+ + \bar{A}_\kappa^-\right]\prod_{u=1}^{\on}\left[ \Tr_{\rho_u^+}\ \hat{\mathcal{W}}_\kappa^+(q; \ol^u, \uL) + \Tr_{\rho_u^-}\ \hat{\mathcal{W}}_\kappa^-(q; \ol^u, \uL) \right]. \nonumber \eeq Here, $A_\kappa^\pm$ were defined by Expression \ref{e.c.4a}.
\end{defn}

\subsection{$F_S^2$ Path Integral}\label{ss.fpi.2a}

We will now define the path integral \beq
\frac{1}{Z_{EH}}\int F_S^2(\{A^k_{\alpha\beta}\}) V( \{\ul^v\}_{v=1}^{\un})(\{B^i_\mu\})W(q; \{\ol^u, \rho_u\}_{u=1}^{\on})(\{A^k_{\alpha\beta}\}) e^{i\varsigma\int_{\bR^4} \partial_0 A_0 \cdot B \vec{\times} B   -  A \cdot \tilde{B}}  D\Lambda, \label{e.f.2a} \eeq which we will denote it as $\hat{F}_S^2[Z(q; \chi(\oL, \uL))]$.

Refer to Notation \ref{n.s.3}. Consider the term $\frac{1}{2}\sum_{i=1}^3\int_S \partial_i A_{\alpha \beta}^j \otimes dx_i \wedge dx_j  \otimes \hat{E}^{\alpha \beta}$. Observe that we can write this term as \beq \frac{1}{2}\sum_{i=1}^3\int_S \partial_i A_{\alpha \beta}^j \otimes dx_i \wedge dx_j \otimes \hat{E}^{\alpha \beta} = \frac{1}{2}\int_{I^2} \nabla \times (A_{\alpha\beta}^1, A_{\alpha\beta}^2, A_{\alpha\beta}^3)(\sigma(\hat{t})) \cdot (J_{23}, J_{31}, J_{12})(\hat{t})\ d\hat{t}\ \otimes \hat{E}^{\alpha\beta}. \nonumber \eeq

\begin{rem}
Note that $\nabla \times (A_{\alpha \beta}^1, A_{\alpha \beta}^2, A_{\alpha \beta}^3)$ means take the curl of the 3-vector $(A_{\alpha \beta}^1, A_{\alpha \beta}^2, A_{\alpha \beta}^3)$.
\end{rem}

\begin{lem}
Recall $\bar{E} = (1, 1, 1)$. Let $\delta^{\vec{x}}$ be the Dirac-delta function, i.e. for any function $f$, $\langle f, \delta^{\vec{x}} \rangle = f(\vec{x})$. Refer to the parametrizations $\vec{y}^u$ and $\vec{\varrho}^v$ defined in Notation \ref{n.n.3}. After doing a change of variables given in Notation \ref{n.c.1}, the path integral in Expression (\ref{e.f.2a}) is defined as \beq
\frac{1}{\hat{Z}_{EH}}\int \bar{F}_S^2(\{A^k_{\alpha\beta}\})\tilde{V}(\{B^j_\mu\}) \tilde{W}(\{A^k_{\alpha\beta}\})  e^{i\int_{\bR^4} A_0\cdot  B   -  A \cdot \tilde{B}}  D\Lambda \label{e.f.4} \eeq with \beq
\hat{Z}_{EH} = \int \exp\left[i\int_{\bR^4} A_0\cdot  B   -  A \cdot \tilde{B}\right]
D\Lambda, \nonumber \eeq after applying Steps \ref{d.cs.r.1a} and \ref{d.cs.r.1b} in Definition \ref{d.cs.r.1}.

Here, $\tilde{V}(\{B^j_\mu\})$ and $\tilde{W}(\{A^k_{\alpha\beta}\})$ were defined in Equations (\ref{e.w.2}) and (\ref{e.w.3}) respectively and
\begin{align*}
\bar{F}_S^2&(\{A^k_{\alpha\beta}\}) \\
=& \int_{I^2}d\hat{t}\left\langle A_{0j},  \partial_0^{-1}\nabla \times (\delta^{\vec{\sigma}(\hat{t})}J_\sigma(\hat{t})) \right\rangle \otimes \hat{E}^{0j} + \int_{I^2}d\hat{t}\left\langle A_{\tau(j)}, \partial_0^{-1} \nabla \times(\delta^{\vec{\sigma}(\hat{t})}J_\sigma(\hat{t})) \right\rangle  \otimes \hat{E}^{\tau(j)}.
\end{align*}
\end{lem}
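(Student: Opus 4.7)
The plan is to adapt the proof of Lemma \ref{l.f.1} (which in turn is a minor modification of Lemma \ref{l.x.1}) to the new observable $F_S^2$. Crucially, the substitutions producing $\tilde{V}$ from $V$ and $\tilde{W}$ from $W$ are untouched by the presence of $F_S^2$: the Minkowski-index sector and the connection sector decouple from the new surface-integrand at the level of the change of variables. Thus $\tilde{V}(\{B^j_\mu\})$ and $\tilde{W}(\{A^k_{\alpha\beta}\})$ are inherited verbatim from Lemma \ref{l.x.1}, and the only genuine work is to track what Steps \ref{d.cs.r.1a} and \ref{d.cs.r.1b} of Definition \ref{d.cs.r.1} do to $F_S^2(\{A^k_{\alpha\beta}\})$.

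First I would expand $F_S^2$ in coordinates. Because $A^i_{\alpha\beta} = -A^i_{\beta\alpha}$ and $\hat{E}^{\alpha\beta} = -\hat{E}^{\beta\alpha}$, the double sum $\sum_{\alpha,\beta}A^i_{\alpha\beta}\otimes \hat{E}^{\alpha\beta}$ collapses to twice the sum over the ordered basis pairs, namely over $\hat{E}^{0j}$ and $\hat{E}^{\tau(j)}$, $j=1,2,3$; this absorbs the prefactor $\tfrac{1}{2}$ and organizes the final expression into the two pieces appearing in the statement. With the $3$-vector $A_{\alpha\beta} := (A^1_{\alpha\beta}, A^2_{\alpha\beta}, A^3_{\alpha\beta})$, the pullback of $\sum_{i,j=1}^3 \partial_i A^j_{\alpha\beta}\, dx_i \wedge dx_j$ under $\vec{\sigma}$ is, by a direct antisymmetry calculation together with the definitions of $J_{ij}$ and $J_\sigma$, precisely $(\nabla \times A_{\alpha\beta})(\vec{\sigma}(\hat{t})) \cdot J_\sigma(\hat{t})\, d\hat{t}$, with $\nabla \times$ as in Item (3) of Definition \ref{d.lo.2}.

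Next I would apply Step \ref{d.cs.r.1a} by rewriting scalar quantities at a point as spatial pairings against a Dirac delta. The key identity is that $\nabla\times$ is formally self-adjoint in $L^2(\bR^3)$ (integration by parts, with decay at infinity guaranteed by $A^j_{\alpha\beta}\in\overline{\mathcal{S}}_\kappa(\bR^4)$), so for each fixed $\hat{t}$, $(\nabla\times A_{\alpha\beta})(\vec{\sigma}(\hat{t}))\cdot J_\sigma(\hat{t}) = \langle A_{\alpha\beta},\,\nabla\times[\delta^{\vec{\sigma}(\hat{t})}J_\sigma(\hat{t})]\rangle$, where the curl on the right is the distributional curl of the vector-valued distribution $\delta^{\vec{\sigma}(\hat{t})}J_\sigma(\hat{t})$. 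Then Step \ref{d.cs.r.1b} replaces each $A^k_{\alpha\beta}$ by $\partial_0^{-1}A^k_{\alpha\beta}$; by the skew-symmetry of $\partial_0^{-1}$ established in the proof of Lemma \ref{l.x.1}, this operator can be pushed onto the test distribution, giving $\partial_0^{-1}\nabla\times(\delta^{\vec{\sigma}(\hat{t})}J_\sigma(\hat{t}))$ paired against $A_{\alpha\beta}$. Collecting both sectors via the $\hat{E}^{0j}/\hat{E}^{\tau(j)}$ decomposition above yields exactly $\bar{F}_S^2(\{A^k_{\alpha\beta}\})$.

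The main obstacle is purely bookkeeping: the sign from skew-symmetry of $\partial_0^{-1}$ must be reconciled with the self-adjointness of $\nabla\times$, the antisymmetry of $A^i_{\alpha\beta}$ in its Lorentz indices, and the $\tfrac{1}{2}$ prefactor of $F_S^2$, so that the final signs match the sign convention that was already fixed in Lemma \ref{l.f.1} for the analogous $\bar{F}_S^1$. Once the $(\alpha,\beta)$ indices are collapsed to the preferred basis and the two formal adjointness identities are invoked in the right order, the remaining manipulations are mechanical substitutions directly paralleling those in Lemma \ref{l.x.1}, and the $\tilde{V}$, $\tilde{W}$ factors are simply carried through unchanged.
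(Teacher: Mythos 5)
Your overall route is the same as the paper's: expand $F_S^2$ as $(\nabla\times A_{\alpha\beta})(\vec{\sigma}(\hat{t}))\cdot J_\sigma(\hat{t})$, move the curl onto the delta function by a formal adjointness identity, then implement Step \ref{d.cs.r.1b} via $A_{\alpha\beta}\mapsto\partial_0^{-1}A_{\alpha\beta}$ and the skew-symmetry $\langle\partial_0^{-1}f,g\rangle=-\langle f,\partial_0^{-1}g\rangle$, with $\tilde{V}$ and $\tilde{W}$ carried over unchanged from Lemma \ref{l.x.1}. The collapse of the $\tfrac12\sum_{\alpha\beta}$ onto the ordered basis $\{\hat{E}^{0j},\hat{E}^{\tau(j)}\}$ and the identification of the pullback with $(\nabla\times A_{\alpha\beta})\cdot J_\sigma$ also match what the paper does.

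The one substantive divergence is your key identity for the curl. You assert that $\nabla\times$ is formally \emph{self-adjoint} (which is indeed the standard integration-by-parts identity for the usual curl on $\bR^3$), whereas the paper's proof explicitly invokes that $\nabla\times$ is \emph{skew-symmetric}, writing $\langle\nabla\times A_{\alpha\beta},\delta^{\vec{x}}J_\sigma\rangle=\langle A_{\alpha\beta},-\nabla\times(\delta^{\vec{x}}J_\sigma)\rangle$. This is not mere bookkeeping: with skew-symmetry the minus sign cancels against the minus sign from $\partial_0^{-1}$, producing the $+\,\partial_0^{-1}\nabla\times(\delta^{\vec{\sigma}(\hat{t})}J_\sigma(\hat{t}))$ appearing in the stated $\bar{F}_S^2$; with your self-adjointness there is only one sign flip and you land on $-\,\partial_0^{-1}\nabla\times(\delta^{\vec{\sigma}(\hat{t})}J_\sigma(\hat{t}))$, i.e.\ the opposite sign to the lemma. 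You acknowledge that the signs "must be reconciled" but do not resolve the tension, and as written your argument derives a formula differing from the statement by an overall sign in both terms. To close the gap you must either adopt the paper's convention for the adjoint of its (form-valued) $\nabla\times$ from Definition \ref{d.lo.2}, or explain why the standard self-adjointness computation does not apply to that operator; simply citing the usual $L^2(\bR^3)$ identity contradicts the target formula.
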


\begin{proof}
By writing $A_{\alpha\beta} = (A_{\alpha\beta}^1, A_{\alpha\beta}^2, A_{\alpha\beta}^3)$ and $J_\sigma = (J_{23}, J_{31}, J_{12})$, we see that
\begin{align*}
(\nabla \times A_{\alpha\beta}(\vec{x})) \cdot J_\sigma(\hat{t}) =& \left\langle \nabla \times A_{\alpha\beta}^{i}, \delta^{\vec{x}} J_{\tau(i)}(\hat{t}) \right\rangle
\equiv \left\langle \nabla \times A_{\alpha\beta},  \delta^{\vec{x}} J_\sigma(\hat{t})\right\rangle \\
=&  \left\langle A_{\alpha\beta},  -\nabla \times\delta^{\vec{x}} J_\sigma(\hat{t})\right\rangle.
\end{align*}
In the last equality, we make use of the fact that $\nabla \times$ is an skew-symmetric operator.

Note that $A_{\alpha\beta} \mapsto \partial_0^{-1}A_{\alpha\beta}$ and $\langle \partial_0^{-1}f, g \rangle = -\langle f, \partial_0^{-1}g \rangle$. The rest of the proof is similar to the proof in Lemma \ref{l.f.1}, hence omitted.
\end{proof}

As explained in Subsection \ref{ss.fpi.1a}, Expression \ref{e.f.4} is not a Chern-Simons integral, therefore we need to make some approximations, i.e. approximate $m(B)$ and $m(B_i)$ with $m_\kappa(B)$ and $m_\kappa(B_i)$, and $\delta^{\vec{x}}$ with $p_\kappa^{\vec{x}}$. After making these approximations, it is still not a Chern-Simons integral. So, we need to make use of the substitutions given by Equations (\ref{e.x.3a}) and (\ref{e.x.3b}). Finally we need to add in factors of $\bk$.

Thus we approximate our path integral in Expression \ref{e.f.4} with
\beq
\frac{1}{\hat{Z}_{EH}}\int \bar{F}_{\kappa,S}^2(\{A^k_{\alpha\beta}\}) \bar{V}_\kappa(\{B^j_\mu\}) \bar{W}_\kappa(\{A^k_{\alpha\beta}\})  e^{i\int_{\bR^4} A_0\cdot  B   -  A \cdot \tilde{B}}  D\Lambda \label{e.f.5}. \eeq

Here, $\bar{V}_\kappa(\{B^j_\mu\})$ and $\bar{W}_\kappa(\{A^k_{\alpha\beta}\})$ were defined in Definition \ref{d.f.1} and
\begin{align}
\bar{F}_{\kappa,S}^2&(\{A^k_{\alpha\beta}\}) \nonumber\\
:=&  \bk^2\int_{I^2}d\hat{t}\left\langle A_{0j},   \partial_0^{-1}\nabla \times (p_\kappa^{\vec{\sigma}(\hat{t})}J_\sigma(\hat{t})) \right\rangle \otimes \hat{E}^{0j} + \bk^2\int_{I^2}d\hat{t}\left\langle A_{\tau(j)}, \partial_0^{-1} \nabla \times(p_\kappa^{\vec{\sigma}(\hat{t})}J_\sigma(\hat{t})) \right\rangle  \otimes \hat{E}^{\tau(j)} \nonumber\\
\equiv&\left\langle A_{0j}, \bk^2\int_{I^2}d\hat{t}\  \partial_0^{-1}\nabla \times (p_\kappa^{\vec{\sigma}(\hat{t})}J_\sigma(\hat{t})) \right\rangle \otimes \hat{E}^{0j} + \left\langle A_{\tau(j)}, \bk^2\int_{I^2}d\hat{t}\ \partial_0^{-1} \nabla \times(p_\kappa^{\vec{\sigma}(\hat{t})}J_\sigma(\hat{t})) \right\rangle  \otimes \hat{E}^{\tau(j)}.\label{e.f.4a}
\end{align}

This completes Steps \ref{d.cs.r.1c} and \ref{d.cs.r.1d} in Definition \ref{d.cs.r.1}.

\begin{notation}\label{n.j.1}
Write $\hat{J}(\hat{t}) = (\hat{J}^1(\hat{t}), \hat{J}^2(\hat{t}), \hat{J}^3(\hat{t}))$, whereby \beq \hat{J}^i = J_{\tau(i)}\otimes(\hat{E}^{01}, \hat{E}^{02}, \hat{E}^{03}). \nonumber \eeq
\end{notation}

\begin{lem}
Refer to Definition \ref{d.w.1} where $\mathcal{W}_\kappa^\pm(q; \oL, \ul^v)$ was defined. Apply Step \ref{d.cs.r.1e} in Definition \ref{d.cs.r.1}, the path integral in Expression \ref{e.f.5} is hence computed as $\bar{A}_\kappa Z(\kappa, q; \chi(\oL, \uL))$, whereby
\begin{align}
\bar{A}_\kappa& := -i \bk\sum_{i=1}^3\left\langle \sum_{v=1}^{\un}\int_I\ d\bar{s}\ \left\{\left[m_\kappa(\lambda_\kappa^v(\bar{s}) )\right]^{-1}( \varrho_{\bar{s}}^{v,\prime} p_\kappa^{\vec{\varrho}^{v}_{\bar{s}}} ) \right\}^i , \left\{\bk^2 \int_{I^2} \partial_0^{-1}\nabla \vec{\times}(p_\kappa^{\vec{\sigma}(\hat{t})} \otimes \hat{J}(\hat{t})) \right\}^id\hat{t} \right\rangle  \nonumber\\
&+ i\bk\sum_{k=1}^3\left\langle \sum_{v=1}^{\un}\int_I\ d\bar{s}\ \left\{\left[m_\kappa(\tilde{\lambda}_\kappa^v(\bar{s}) )\right]^{-1} (\varrho_{\bar{s}}^{v,\prime} p_\kappa^{\vec{\varrho}^{v}_{\bar{s}}}) \right\}^k , \left\{ \bk^2\int_{I^2} \partial_0^{-1}\nabla \times (p_\kappa^{\vec{\sigma}(\hat{t})} J_\sigma(\hat{t})) \right\}^k d\hat{t} \right\rangle \otimes \mathcal{F}^-. \label{e.c.3}
\end{align}
Here, both $\lambda_\kappa^v$ and $\tilde{\lambda}_\kappa^v$ were defined in Equations (\ref{e.l.1}) and (\ref{e.l.2}) respectively. And note that $Z(\kappa, q; \chi(\oL, \uL))$ was defined in Equation (\ref{e.z.1}).
\end{lem}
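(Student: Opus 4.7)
The plan is to execute the same three-step template used for the $F_S^1$ path integral in Subsection \ref{ss.fpi.1a}. The three factors inside the integrand of Expression \ref{e.f.5} are the measure-like functional $\bar{V}_\kappa$, the (time-ordered, traced) Wilson loop $\bar{W}_\kappa$, and the linear functional $\bar{F}_{\kappa,S}^2$ displayed in Equation (\ref{e.f.4a}). Step \ref{d.cs.r.1e} of Definition \ref{d.cs.r.1} instructs us to replace every occurrence of $A_{0j}^i$ and $A_{\tau(j)}^i$ according to the two prescriptions of Equation (\ref{e.x.5}). Those prescriptions are determined by $\bar{V}_\kappa$ together with the bilinear pairing $\int A_0\cdot B - A\cdot \tilde{B}$, not by the surface functional; hence the substitutions are identical to those used in the $F_S^1$ lemma and can be imported without modification.

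The first sub-step is to carry out the substitution inside $\bar{W}_\kappa$. This is a verbatim repetition of the calculation in the proof of Lemma \ref{l.x.2}: the exponent reorganizes into the two holonomy-like factors $\mathcal{W}_\kappa^\pm$ of Definition \ref{d.w.1}, and their product over the components of $\oL$ reproduces $Z(\kappa,q;\chi(\oL,\uL))$ as in Equation (\ref{e.z.1}). As noted in the corresponding remark for $F_S^1$, the time-ordering operator may now be dropped, because after substitution the matrices originating in $\bar{F}_{\kappa,S}^2$ are constant coefficients which commute with everything produced from $\bar{W}_\kappa$.

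The second sub-step is to substitute into $\bar{F}_{\kappa,S}^2$. Since this functional is linear in the 3-vectors $A_{0j}$ and $A_{\tau(j)}$, the replacement commutes with the inner-product pairings. For the $A_{0j}$ term the substitution delivers a 3-vector whose $i$-component is $-i\bk\sum_v\int d\bar{s}\,\{[m_\kappa(\lambda_\kappa^v(\bar{s}))]^{-1}(\varrho_{\bar{s}}^{v,\prime} p_\kappa^{\vec{\varrho}^v_{\bar{s}}})\}^i$; pairing it with $\bk^2\int \partial_0^{-1}\nabla\times(p_\kappa^{\vec{\sigma}(\hat t)} J_\sigma(\hat t))\,d\hat t$ and packaging the sum over $j$ of $J_{\tau(i)}\otimes\hat{E}^{0j}$ into $\hat{J}$ via Notation \ref{n.j.1} produces precisely the first summand of $\bar{A}_\kappa$ in Equation (\ref{e.c.3}). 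The $A_{\tau(j)}$ term is treated analogously; this time the sum $\sum_{j=1}^3\hat{E}^{\tau(j)}$ collapses to $\mathcal{F}^-$ (Notation \ref{n.su.1}), giving the second summand. Factoring out the common scalar $Z(\kappa,q;\chi(\oL,\uL))$ yields the stated formula.

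The main obstacle is purely book-keeping: tracking how the 3-vector index $i$ emerging from $[m_\kappa(\lambda_\kappa^v)]^{-1}$ (and from $[m_\kappa(\tilde{\lambda}_\kappa^v)]^{-1}$) pairs against the $i$-th slot of either $\partial_0^{-1}\nabla\times(p_\kappa J_\sigma)$ or of $\partial_0^{-1}\nabla\vec{\times}(p_\kappa\otimes\hat{J})$, and how the Lie-algebra generators $\hat{E}^{0j}$ and $\hat{E}^{\tau(j)}$ are absorbed into $\hat{J}$ and $\mathcal{F}^-$ respectively. Once the conventions of Notations \ref{n.n.6} and \ref{n.j.1} are applied consistently, no new analytic estimate is needed; the lemma follows by direct substitution.
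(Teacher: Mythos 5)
Your proposal is correct and follows essentially the same route as the paper: apply the substitution of Equation (\ref{e.x.5}) into $\bar{W}_\kappa$ to recover $Z(\kappa,q;\chi(\oL,\uL))$ exactly as in Lemma \ref{l.x.2}, and into the linear functional $\bar{F}_{\kappa,S}^2$ of Equation (\ref{e.f.4a}) to produce $\bar{A}_\kappa$, with the generators packaged via Notation \ref{n.j.1} and $\mathcal{F}^-$. Your write-up is in fact more explicit about the index bookkeeping than the paper's own two-line argument.
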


\begin{proof}
According to the rules of Definition \ref{d.cs.r.1} of the Chern-Simons integral and from Equation (\ref{e.b.3}), we use the Substitution given by Equations (\ref{e.x.5})
inside the functionals $\bar{W}_\kappa$ and $\bar{F}_S^2$ given by Equations (\ref{e.b.3}) and (\ref{e.f.4a}) respectively.

One can show that the substitution inside $\bar{W}_\kappa$ will yield $Z(\kappa, q; \chi(\oL, \uL))$. Now refer to Notation \ref{n.j.1}. Making the above substitution inside $\bar{F}_{\kappa,S}^2$ will yield the term $\bar{A}_\kappa$.
\end{proof}

\begin{rem}
\begin{enumerate}
  \item See Remark \ref{r.p.2} for an explanation of RHS of Equation (\ref{e.c.3}).
  \item The term \beq \partial_0^{-1}\nabla \vec{\times}(p_\kappa^{\vec{\sigma}(\hat{t})} \otimes \hat{J}(\hat{t})) \equiv \partial_0^{-1}\nabla \times \left( p_\kappa^{\vec{\sigma}(\hat{t})}\otimes J_\sigma(\hat{t}) \right)\otimes (\hat{E}^{01}, \hat{E}^{02}, \hat{E}^{03})
       \nonumber \eeq can be written as $(a^1, a^2, a^3)\otimes (\hat{E}^{01}, \hat{E}^{02}, \hat{E}^{03})$, with each $a^i$ given by for $(i,j,k) \in C_3$,
      \begin{align*}
      a^i =& \partial_0^{-1}(\partial_j p_\kappa^{\vec{\sigma}(\hat{t})})J_{\tau(k)}(\hat{t}) - \partial_0^{-1}(\partial_k p_\kappa^{\vec{\sigma}(\hat{t})})J_{\tau(j)}(\hat{t}).
      \end{align*}
      Note that $\partial_0^{-1}(\partial_i p_\kappa^{\vec{\sigma}(\hat{t})})$ is defined using Equation (\ref{e.d.1}).
\end{enumerate}

\end{rem}

Now $\langle \nabla \times f, g \rangle = \langle f, -\nabla \times g \rangle$ and from Remark \ref{r.m.1},
\beq -\nabla \times m_\kappa(0)^{-1} = -\kappa. \nonumber \eeq Hence $m_\kappa(0)^{-1} = \kappa (\nabla \times)^{-1}$.
Since $\kappa\lambda_\kappa \rightarrow 0$ and $\kappa\tilde{\lambda}_\kappa \rightarrow 0$ as $\kappa \rightarrow \infty$ by Lemma \ref{l.k.1}, therefore to compute the limit of Expression \ref{e.c.3} as $\kappa$ goes to infinity is equivalent to compute the limit of
\begin{align*}
\bar{B}_\kappa := \frac{i\kappa^3}{32\pi\sqrt{4\pi}}&\int_{I^3}\  \kappa\left\langle \partial_0^{-1} p_\kappa^{\vec{\sigma}(\hat{t})}, p_\kappa^{\vec{\varrho}^{v}_s}\right\rangle \varrho_s^{v,\prime}\cdot J_\sigma(\hat{t}) \otimes \sum_{i=1}^3\hat{E}^{0i}\ ds d\hat{t}, \\
-\frac{i\kappa^3}{32\pi\sqrt{4\pi}}&\int_{I^3}\ \kappa\left\langle \partial_0^{-1} p_\kappa^{\vec{\sigma}(\hat{t})}, p_\kappa^{\vec{\varrho}^{v}_s}\right\rangle \varrho_s^{v,\prime}\cdot J_\sigma(\hat{t}) \otimes \sum_{j=1}^3\hat{E}^{\tau(j)}\ ds d\hat{t},
\end{align*}
as $\kappa$ goes to infinity.

\begin{defn}\label{d.x.2}
Refer to Equation \ref{e.x.4}. We define Expression \ref{e.f.2a} by
\beq \lim_{\kappa \rightarrow \infty}\bar{B}_\kappa \prod_{u=1}^{\on}\left[ \Tr_{\rho_u^+}\ \hat{\mathcal{W}}_\kappa^+(q; \ol^u, \uL) + \Tr_{\rho_u^-}\ \hat{\mathcal{W}}_\kappa^-(q; \ol^u, \uL) \right], \nonumber \eeq whereby
$\bar{B}_\kappa$ was defined in the preceding paragraph.
\end{defn}

\subsection{$F_S^3$ Path Integral}\label{ss.fpi.3a}

Refer to Notations \ref{n.n.5} and \ref{n.s.3}. We have finally come to the last term \beq F_S^3(\{A^k_{\alpha\beta}\}) = \frac{1}{2}\int_S A_{\alpha\beta}^iA_{\gamma\mu}^j\otimes dx_i \wedge dx_j \otimes[\hat{E}^{\alpha\beta}, \hat{E}^{\gamma\mu}]. \nonumber \eeq

We will now define the path integral \beq
\frac{1}{Z_{EH}}\int F_S^3(\{A^k_{\alpha\beta}\})  V( \{\ul^v\}_{v=1}^{\un})(\{B^i_\mu\})W(q; \{\ol^u, \rho_u\}_{u=1}^{\on})(\{A^k_{\alpha\beta}\}) e^{i\varsigma\int_{\bR^4} \partial_0 A_0 \cdot B \vec{\times} B   -  A \cdot \tilde{B}}  D\Lambda, \label{e.f.6a} \eeq which we will denote it as $\hat{F}_S^3[Z(q; \chi(\oL, \uL))]$.

\begin{lem}
Recall $\bar{E} = (1, 1, 1)$. Let $\delta^{\vec{x}}$ be the Dirac-delta function, i.e. for any function $f$, $\langle f, \delta^{\vec{x}} \rangle = f(\vec{x})$. Refer to the parametrizations $\vec{y}^u$ and $\vec{\varrho}^v$ defined in Notation \ref{n.n.3}. After doing a change of variables given in Notation \ref{n.c.1}, the path integral in Expression \ref{e.f.6a} is defined as \beq
\frac{1}{\hat{Z}_{EH}}\int \bar{F}_S^3(\{A^k_{\alpha\beta}\})\tilde{V}(\{B^j_\mu\}) \tilde{W}(\{A^k_{\alpha\beta}\})  e^{i\int_{\bR^4} A_0\cdot  B   -  A \cdot \tilde{B}}  D\Lambda \label{e.f.7} \eeq with \beq
\hat{Z}_{EH} = \int \exp\left[i\int_{\bR^4} A_0\cdot  B   -  A \cdot \tilde{B}\right]
D\Lambda, \nonumber \eeq after applying Steps \ref{d.cs.r.1a} and \ref{d.cs.r.1b} in Definition \ref{d.cs.r.1}.

Here, $\tilde{V}(\{B^j_\mu\})$ and $\tilde{W}(\{A^k_{\alpha\beta}\})$ were defined in Equations (\ref{e.w.2}) and (\ref{e.w.3}) respectively and $\bar{F}_S^3(\{A^k_{\alpha\beta}\})$ is defined by Equation (\ref{e.x.6}).
See Equations (\ref{e.f.9a}) and (\ref{e.f.9b}).
\end{lem}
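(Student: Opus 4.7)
The plan is to mimic the arguments for Lemma \ref{l.f.1} and the $F_S^2$ lemma in Subsection \ref{ss.fpi.2a}, since the substitutions producing $\tilde{V}(\{B^j_\mu\})$ and $\tilde{W}(\{A^k_{\alpha\beta}\})$ are unchanged from Lemma \ref{l.x.1}. The only new ingredient is the quadratic term $F_S^3$. First, I would parametrize $S$ by $\vec{\sigma}: I^2 \to \bR^4$ and rewrite
\beq F_S^3(\{A^k_{\alpha\beta}\}) = \frac{1}{2}\int_{I^2} A^i_{\alpha\beta}(\vec{\sigma}(\hat{t}))\, A^j_{\gamma\mu}(\vec{\sigma}(\hat{t}))\, J_{ij}(\hat{t})\, d\hat{t} \otimes [\hat{E}^{\alpha\beta},\hat{E}^{\gamma\mu}], \nonumber \eeq
and then, by Step \ref{d.cs.r.1a}, replace each pointwise value $A^i_{\alpha\beta}(\vec{\sigma}(\hat{t}))$ by $\langle A^i_{\alpha\beta}, \delta^{\vec{\sigma}(\hat{t})}\rangle$, obtaining a product of two such inner products under the $d\hat{t}$-integral.

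Next, Step \ref{d.cs.r.1b} asks for the change $\partial_0 A_0 \mapsto A_0$, $\partial_0 A \mapsto A$, equivalently $A^k_{\alpha\beta} \mapsto \partial_0^{-1} A^k_{\alpha\beta}$ throughout. Applied to the quadratic $F_S^3$, this yields the factor $\langle \partial_0^{-1} A^i_{\alpha\beta},\delta^{\vec{\sigma}(\hat{t})}\rangle \langle \partial_0^{-1} A^j_{\gamma\mu},\delta^{\vec{\sigma}(\hat{t})}\rangle$. Using the skew-symmetry $\langle \partial_0^{-1}f, g\rangle = -\langle f, \partial_0^{-1} g\rangle$ on each factor separately, the two minus signs cancel and the product becomes $\langle A^i_{\alpha\beta},\partial_0^{-1}\delta^{\vec{\sigma}(\hat{t})}\rangle \langle A^j_{\gamma\mu},\partial_0^{-1}\delta^{\vec{\sigma}(\hat{t})}\rangle$; this defines $\bar{F}_S^3(\{A^k_{\alpha\beta}\})$ in Equation (\ref{e.x.6}).

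Finally, I would decompose the commutator $[\hat{E}^{\alpha\beta},\hat{E}^{\gamma\mu}]$ according to the direct-sum decomposition $\mathfrak{su}(2)\times\mathfrak{su}(2)$ from Notation \ref{n.su.1}. The cross-commutators $[\hat{E}^{0i},\hat{E}^{\tau(j)}]$ vanish, while $[\hat{E}^{0i},\hat{E}^{0j}] = \epsilon^{ijk}\hat{E}^{0k}$ and $[\hat{E}^{\tau(i)},\hat{E}^{\tau(j)}] = \epsilon^{ijk}\hat{E}^{\tau(k)}$. This splits $\bar{F}_S^3$ into two pieces, one landing in each copy of $\mathfrak{su}(2)$, which furnishes Equations (\ref{e.f.9a}) and (\ref{e.f.9b}). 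The construction of $\tilde{V}$ and $\tilde{W}$ is identical to the proof of Lemma \ref{l.x.1} and requires no extra verification, so assembling the three pieces yields Expression (\ref{e.f.7}).

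The current lemma itself presents no real obstacle; it is essentially bookkeeping. The genuine difficulty --- paralleling what happens in Subsections \ref{ss.fpi.1a} and \ref{ss.fpi.2a} --- will appear in the next step, when one tries to interpret the resulting expression as a Chern-Simons integral. Because the new piece is \emph{quadratic} in $A$ rather than linear, after approximating $\delta^{\vec{\sigma}(\hat{t})}$ by $p_\kappa^{\vec{\sigma}(\hat{t})}$ one is forced to regulate a term of the form $\langle A,\partial_0^{-1}p_\kappa^{\vec{\sigma}(\hat{t})}\rangle^2$, which does not fit directly into the template of Expression \ref{e.cs.1}. I expect that, as in the earlier subsections, one will need an additional approximation analogous to the replacement of $m(B)^{-1}$ by $m_\kappa(\lambda_\kappa^v(\bar{s}))^{-1}$, and careful use of Definition \ref{d.cs.1} to render the resulting expression well-defined before taking $\kappa \to \infty$.
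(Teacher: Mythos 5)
Your proposal matches the paper's proof in all essentials: replacing pointwise products $A^i_{\alpha\beta}(\vec{\sigma}(\hat{t}))A^j_{\gamma\mu}(\vec{\sigma}(\hat{t}))$ by tensor inner products against $\delta^{\vec{\sigma}(\hat{t})}\otimes\delta^{\vec{\sigma}(\hat{t})}$, applying $A\mapsto\partial_0^{-1}A$ and using skew-symmetry of $\partial_0^{-1}$ on each factor so that the two signs cancel, splitting by the componentwise Lie bracket (vanishing cross-commutators), and deferring $\tilde{V}$, $\tilde{W}$ to the earlier lemmas. The paper does exactly this, merely bookkeeping the sums over $\Upsilon$ with an explicit factor of $8$ rather than keeping the Einstein-summed $\tfrac{1}{2}$ form, so no substantive difference remains.
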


\begin{proof}
Now we can write
\begin{align*}
A_{\alpha\beta}^i(\vec{x})A_{\gamma\mu}^j(\vec{x}) \equiv& \left\langle A_{\alpha\beta}^i, \delta^{\vec{x}} \right\rangle  \left\langle A_{\gamma\mu}^j, \delta^{\vec{x}} \right\rangle
=: \left\langle A_{\alpha\beta}^i \otimes A_{\gamma\mu}^j, \delta^{\vec{x}} \otimes \delta^{\vec{x}}\right\rangle,
\end{align*}
using the tensor inner product.

Refer to Notations \ref{n.n.5}, \ref{n.s.2} and \ref{n.s.3}. Then we can write
\begin{align*}
F_S^3&(\{A^k_{\alpha\beta}\}) \\
=& 8\sum_{(i,j) \in \Upsilon}\sum_{(\bar{i},\bar{j}) \in \Upsilon}\int_{I^2} d\hat{t}\ A_{0i}^{\bar{i}}(\vec{\sigma}(\hat{t}))A_{0j}^{\bar{j}}(\vec{\sigma}(\hat{t})) J_{\bar{i}\bar{j}}(\hat{t}) \otimes[\hat{E}^{0i}, \hat{E}^{0j}] \\
&+ 8\sum_{(i,j) \in \Upsilon}\sum_{(\bar{i},\bar{j}) \in \Upsilon}\int_{I^2} d\hat{t}\ A_{\tau(i)}^{\bar{i}}(\vec{\sigma}(\hat{t}))A_{\tau(j)}^{\bar{j}}(\vec{\sigma}(\hat{t})) J_{\bar{i}\bar{j}}(\hat{t}) \otimes[\hat{E}^{\tau(i)}, \hat{E}^{\tau(j)}] \\
=& 8\sum_{(i,j) \in \Upsilon}\sum_{(\bar{i},\bar{j}) \in \Upsilon}\int_{I^2} d\hat{t} \left\langle A_{0i}^{\bar{i}}\otimes A_{0j}^{\bar{j}}, \delta^{\vec{\sigma}(\hat{t})} \otimes \delta^{\vec{\sigma}(\hat{t})} \right\rangle\ J_{\bar{i}\bar{j}}(\hat{t}) \otimes[\hat{E}^{0i}, \hat{E}^{0j}] \\
&+ 8\sum_{(i,j) \in \Upsilon}\sum_{(\bar{i},\bar{j}) \in \Upsilon}\int_{I^2} d\hat{t} \left\langle A_{\tau(i)}^{\bar{i}}\otimes A_{\tau(j)}^{\bar{j}}, \delta^{\vec{\sigma}(\hat{t})} \otimes \delta^{\vec{\sigma}(\hat{t})} \right\rangle\ J_{\bar{i}\bar{j}}(\hat{t}) \otimes[\hat{E}^{\tau(i)}, \hat{E}^{\tau(j)}].
\end{align*}

Note that $A_{\alpha\beta} \mapsto \partial_0^{-1}A_{\alpha\beta}$ and $\langle \partial_0^{-1}f, g \rangle = -\langle f, \partial_0^{-1}g \rangle$. So we define
\begin{align}
\bar{F}_S^3&(\{A^k_{\alpha\beta}\}) \nonumber \\
:=& 8\sum_{(i,j) \in \Upsilon}\sum_{(\bar{i},\bar{j}) \in \Upsilon}\int_{I^2} d\hat{t} \left\langle A_{0i}^{\bar{i}}\otimes A_{0j}^{\bar{j}}, \partial_0^{-1}\delta^{\vec{\sigma}(\hat{t})} \otimes \partial_0^{-1}\delta^{\vec{\sigma}(\hat{t})} \right\rangle\ J_{\bar{i}\bar{j}}(\hat{t}) \otimes[\hat{E}^{0i}, \hat{E}^{0j}] \nonumber \\
&+ 8\sum_{(i,j) \in \Upsilon}\sum_{(\bar{i},\bar{j}) \in \Upsilon}\int_{I^2} d\hat{t} \left\langle A_{0i}^{\bar{i}}\otimes A_{0j}^{\bar{j}}, \partial_0^{-1}\delta^{\vec{\sigma}(\hat{t})} \otimes \partial_0^{-1}\delta^{\vec{\sigma}(\hat{t})} \right\rangle\ J_{\bar{i}\bar{j}}(\hat{t}) \otimes[\hat{E}^{\tau(i)}, \hat{E}^{\tau(j)}] \nonumber\\
\equiv& 8\sum_{(i,j) \in \Upsilon}\int_{I^2}d\hat{t}\left\langle A_{0i}\otimes A_{0j},  \partial_0^{-1}\delta^{\vec{\sigma}(t)} \otimes \partial_0^{-1}\delta^{\vec{\sigma}(\bar{t})} J_\sigma(\hat{t}) \right\rangle \otimes [\hat{E}^{0i}, \hat{E}^{0j}] \nonumber \\
&+ 8\sum_{(i,j) \in \Upsilon}\int_{I^2}d\hat{t}\left\langle A_{\tau(i)}\otimes A_{\tau(j)}, \partial_0^{-1}\delta^{\vec{\sigma}(t)} \otimes \partial_0^{-1}\delta^{\vec{\sigma}(\hat{t})}J_\sigma(\hat{t}) \right\rangle  \otimes [\hat{E}^{\tau(i)}, \hat{E}^{\tau(j)}]. \label{e.x.6}
\end{align}
Here,
\begin{align}
A_{0i}\otimes A_{0j} \equiv& \left(A_{0i}^2\otimes A_{0j}^3, A_{0i}^3\otimes A_{0j}^1, A_{0i}^1\otimes A_{0j}^2 \right), \label{e.f.9a}\\
A_{\tau(i)}\otimes A_{\tau(j)} \equiv& \left(A_{\tau(i)}^2\otimes A_{\tau(j)}^3, A_{\tau(i)}^3\otimes A_{\tau(j)}^1, A_{\tau(i)}^1\otimes A_{\tau(j)}^2 \right), \label{e.f.9b}
\end{align}
are respectively 3-vectors, whose components are in $C^\infty(\bR) \otimes C^\infty(\bR)$. The rest of the proof is similar to the proof in Lemma \ref{l.f.1}, hence omitted.

\end{proof}

As explained in Subsection \ref{ss.fpi.1a}, Expression \ref{e.f.7} is not a Chern-Simons integral. To make it into a Chern-Simons integral, we make use of the substitutions given by Equations (\ref{e.x.3a}) and (\ref{e.x.3b}). We also approximate $\delta^{\vec{x}}$ with $p_\kappa^{\vec{x}}$. And we need to add in factors of $\bk$.

Thus we approximate our path integral in Expression \ref{e.f.4} with
\beq
\frac{1}{\hat{Z}_{EH}}\int \bar{F}_{\kappa,S}^3(\{A^k_{\alpha\beta}\}) \bar{V}_\kappa(\{B^j_\mu\}) \bar{W}_\kappa(\{A^k_{\alpha\beta}\})  e^{i\int_{\bR^4} A_0\cdot  B   -  A \cdot \tilde{B}}  D\Lambda \label{e.f.8}. \eeq

Here, $\bar{V}_\kappa(\{B^j_\mu\})$ and $\bar{W}_\kappa(\{A^k_{\alpha\beta}\})$ were defined in Definition \ref{d.f.1} and
\begin{align}
\bar{F}_{\kappa,S}^3&(\{A^k_{\alpha\beta}\}) \nonumber\\
:=&  8\bk^2\sum_{(i,j) \in \Upsilon}\int_{I^2}d\hat{t}\left\langle A_{0i}\otimes A_{0j},  \partial_0^{-1}p_\kappa^{\vec{\sigma}(t)} \otimes \partial_0^{-1}p_\kappa^{\vec{\sigma}(\bar{t})} J_\sigma(\hat{t}) \right\rangle \otimes [\hat{E}^{0i}, \hat{E}^{0j}] \nonumber \\
&+ 8\bk^2\sum_{(i,j) \in \Upsilon}\int_{I^2}d\hat{t}\left\langle A_{\tau(i)}\otimes A_{\tau(j)}, \partial_0^{-1}p_\kappa^{\vec{\sigma}(t)} \otimes \partial_0^{-1}p_\kappa^{\vec{\sigma}(\hat{t})}J_\sigma(\hat{t}) \right\rangle  \otimes [\hat{E}^{\tau(i)}, \hat{E}^{\tau(j)}] \nonumber \\
\equiv& \sum_{(i,j) \in \Upsilon}\left\langle A_{0i}\otimes A_{0j},\ 8\bk^2\int_{I^2}d\hat{t}\ \partial_0^{-1}p_\kappa^{\vec{\sigma}(t)} \otimes \partial_0^{-1}p_\kappa^{\vec{\sigma}(\bar{t})} J_\sigma(\hat{t}) \right\rangle \otimes [\hat{E}^{0i}, \hat{E}^{0j}] \nonumber \\
&+ \sum_{(i,j) \in \Upsilon}\left\langle A_{\tau(i)}\otimes A_{\tau(j)},\ 8\bk^2\int_{I^2}d\hat{t}\ \partial_0^{-1}p_\kappa^{\vec{\sigma}(t)} \otimes \partial_0^{-1}p_\kappa^{\vec{\sigma}(\hat{t})}J_\sigma(\hat{t}) \right\rangle  \otimes [\hat{E}^{\tau(i)}, \hat{E}^{\tau(j)}].\label{e.f.6b}
\end{align}

This completes Steps \ref{d.cs.r.1c} and \ref{d.cs.r.1d} in Definition \ref{d.cs.r.1}.

\begin{notation}\label{n.e.1}
Refer to Notation \ref{n.s.2}. Observe that \beq \sum_{(i,j) \in \Upsilon}[\hat{E}^{0i}, \hat{E}^{0j}] = (\mathcal{E}^+,0) = \mathcal{F}^+, \ \ \sum_{(i,j) \in \Upsilon}[\hat{E}^{\tau(i)}, \hat{E}^{\tau(j)}] = (0,\mathcal{E}^-) = \mathcal{F}^-.
\nonumber \eeq
\end{notation}

\begin{lem}
Refer to Definition \ref{d.w.1} where $\mathcal{W}_\kappa^\pm(q; \oL, \ul^v)$ was defined. Apply Step \ref{d.cs.r.1e} in Definition \ref{d.cs.r.1}, the path integral in Expression \ref{e.f.8} is hence computed as $(A_\kappa + B_\kappa)Z(\kappa, q; \chi(\oL, \uL))$, whereby $A_\kappa$ is given by Expression \ref{e.b.6a} and $B_\kappa$ is given by Expression \ref{e.b.6b}.
\end{lem}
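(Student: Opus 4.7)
The proof would follow the same template as the $F_S^1$ and $F_S^2$ computations in Subsections \ref{ss.fpi.1a} and \ref{ss.fpi.2a}. First I would invoke the substitution rules of Step \ref{d.cs.r.1e} in Definition \ref{d.cs.r.1}, as summarized by the substitutions in Equations \ref{e.x.5}, to replace each occurrence of $A_{0i}^k$ and $A_{\tau(j)}^k$ inside both factors $\bar{W}_\kappa(\{A^k_{\alpha\beta}\})$ and $\bar{F}^3_{\kappa,S}(\{A^k_{\alpha\beta}\})$. The substitution inside $\bar{W}_\kappa$ has already been carried out in the proof of Lemma \ref{l.x.2}; it collapses (after the time-ordered exponential is unpacked against Definition \ref{d.w.1}) to the matrix-valued factor $Z(\kappa, q; \chi(\oL, \uL))$ defined in Equation \ref{e.z.1}. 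So the only genuinely new work is to carry out the substitution inside $\bar{F}^3_{\kappa,S}$ given by Equation \ref{e.f.6b}.

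Since $\bar{F}^3_{\kappa,S}$ is built from tensor products $A_{0i}\otimes A_{0j}$ (respectively $A_{\tau(i)}\otimes A_{\tau(j)}$) paired against the test object $\partial_0^{-1}p_\kappa^{\vec{\sigma}(t)}\otimes\partial_0^{-1}p_\kappa^{\vec{\sigma}(\bar{t})}J_\sigma(\hat{t})$, each slot in the tensor product receives its own independent substitution from Equation \ref{e.x.5}. This produces, for the first piece, a double sum over pairs of loops $(\ul^v,\ul^{\bar{v}})$ of terms involving the product $m_\kappa(\lambda_\kappa^v(\bar{s}))^{-1}\otimes m_\kappa(\lambda_\kappa^{\bar{v}}(\bar{s}'))^{-1}$ acting on $\varrho^{v,\prime}_{\bar{s}} p_\kappa^{\vec{\varrho}^v_{\bar{s}}}\otimes \varrho^{\bar{v},\prime}_{\bar{s}'} p_\kappa^{\vec{\varrho}^{\bar{v}}_{\bar{s}'}}$ (with an overall factor $(-i\bk)^2$), contracted via $\partial_0^{-1}p_\kappa^{\vec{\sigma}(t)}\otimes\partial_0^{-1}p_\kappa^{\vec{\sigma}(\bar{t})}$ and weighted by $J_\sigma$; an analogous expression arises for the second piece with $\tilde{\lambda}_\kappa^v$ in place of $\lambda_\kappa^v$ and an overall factor $(i\bk)^2$. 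After these substitutions, the commutator $[\hat{E}^{0i},\hat{E}^{0j}]$ (respectively $[\hat{E}^{\tau(i)},\hat{E}^{\tau(j)}]$) is summed over $(i,j)\in\Upsilon$, which by Notation \ref{n.e.1} collapses to $\mathcal{F}^+$ (respectively $\mathcal{F}^-$). I would then identify the two resulting summands as $A_\kappa$ and $B_\kappa$.

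The final step is to observe that after contraction, $\bar{F}^3_{\kappa,S}$ takes its value in $\mathfrak{su}(2)\times\mathfrak{su}(2)$, and in particular is a fixed element of the target that is independent of the loop data being time-ordered inside $\bar{W}_\kappa$; hence it can be factored out of the $\widetilde{\Tr}$ and Wilson-loop product, yielding the clean product form $(A_\kappa+B_\kappa)\,Z(\kappa, q; \chi(\oL, \uL))$.

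The main obstacle will be bookkeeping rather than anything conceptually new: I must keep straight the 3-vector nature of $A_{0i}$ and $A_{\tau(i)}$ after substitution (so that the pairings against $J_\sigma$ and the tensor products of $m_\kappa(\lambda_\kappa^v)^{-1}$-terms line up correctly with the superscripts $\bar{i}, \bar{j}$ that appear in Equation \ref{e.x.6}), and verify that upon summing over $(i,j)\in\Upsilon$ the Lie-bracket structure really assembles into $\mathcal{F}^\pm$. Unlike the $F_S^1$ and $F_S^2$ cases, $F_S^3$ is quadratic in $A$, so the substitution produces a quadratic — rather than linear — expression in the operators $m_\kappa(\lambda_\kappa^v)^{-1}$ and $m_\kappa(\tilde{\lambda}_\kappa^v)^{-1}$, which is why $A_\kappa$ and $B_\kappa$ are structurally more elaborate than their counterparts $\bar{A}_\kappa^\pm$ and $\bar{A}_\kappa$ in the earlier subsections. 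Once the tensor indices are tracked carefully, however, the derivation is a routine application of Steps \ref{d.cs.r.1a}--\ref{d.cs.r.1e} in Definition \ref{d.cs.r.1}.
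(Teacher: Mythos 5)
Your proposal follows essentially the same route as the paper: apply the substitution of Equation (\ref{e.x.5}) to both $\bar{W}_\kappa$ (which, as in Lemma \ref{l.x.2}, yields the factor $Z(\kappa,q;\chi(\oL,\uL))$) and to the quadratic functional $\bar{F}^3_{\kappa,S}$, where each tensor slot receives its own independent substitution so that the two pieces become the double loop sums $A_\kappa$ and $B_\kappa$. The only small discrepancy is that in the paper's Expression \ref{e.b.6a} the bracket $[\hat{E}^{0\bar{i}},\hat{E}^{0\bar{j}}]$ is retained inside the sum over $(\bar{i},\bar{j})\in\Upsilon$ because its scalar coefficient still depends on $\bar{i},\bar{j}$ through the components $\{\cdot\}^i_{\bar{i}}$, so the collapse to $\mathcal{F}^{+}$ via Notation \ref{n.e.1} is deferred to the limiting expression $\bar{C}_\kappa^{+}$, whereas you perform that collapse already at this stage.
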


\begin{proof}
According to the rules of Definition \ref{d.cs.r.1} of the Chern-Simons integral and from Equation (\ref{e.b.3}), we apply the substitution given in Equation (\ref{e.x.5})
inside the functionals $\bar{W}_\kappa$ and $\bar{F}_S^3$ given by Equations (\ref{e.b.3}) and (\ref{e.f.6b}) respectively. Here, both $\lambda_\kappa^v$ and $\tilde{\lambda}_\kappa^v$ were defined by Equations (\ref{e.l.1}) and (\ref{e.l.2}) respectively.

The 3-vector \beq \sum_{v=1}^{\un}\int_I\ d\bar{s}\ \left\{\left[m_\kappa(\lambda_\kappa^v(\bar{s}) )\right]^{-1}( \varrho_{\bar{s}}^{v,\prime} p_\kappa^{\vec{\varrho}^{v}_{\bar{s}}} ) \right\}^i \equiv (C_1^i, C_2^i, C_3^i), \nonumber \eeq will have each component \beq C_j^i := \sum_{v=1}^{\un}\int_I\ d\bar{s}\ \left\{\left[m_\kappa(\lambda_\kappa^v(\bar{s}) )\right]^{-1}( \varrho_{\bar{s}}^{v,\prime} p_\kappa^{\vec{\varrho}^{v}_{\bar{s}}} ) \right\}^i_j. \nonumber \eeq

It was shown that the substitution inside $\bar{W}_\kappa$ will yield $Z(\kappa, q; \chi(\oL, \uL))$. Thus after making the above substitution into $\bar{F}_{\kappa,S}^3(\{A^k_{\alpha\beta}\})$, should give us the sum of 2 terms, $A_\kappa + B_\kappa$, the first term $A_\kappa$ being (after some simplification)
\begin{align}
8\left(i\bk\right)^2\sum_{(\bar{i},\bar{j}) \in \Upsilon}\sum_{(i,j) \in \Upsilon}\int_{I^2}d\hat{t}&\ J_{ij}(\hat{t})
\Bigg\{ \left\langle \bk\sum_{v=1}^{\un}\int_I\ ds\ \left\{ \left[m_\kappa(\lambda_\kappa^v(s) )\right]^{-1} \varrho_s^{v,\prime} p_\kappa^{\vec{\varrho}^{v}_s} \right\}_{\bar{i}}^i, \partial_0^{-1} p_\kappa^{\vec{\sigma}(\hat{t})} \right\rangle \nonumber\\
\times& \left\langle \bk\sum_{\bar{v}=1}^{\un}\int_I\ d\bar{s}\ \left\{ \left[m_\kappa(\lambda_\kappa^{\bar{v}}(\bar{s}) )\right]^{-1} \varrho_{\bar{s}}^{\bar{v},\prime} p_\kappa^{\vec{\varrho}^{\bar{v}}_{\bar{s}}} \right\}_{\bar{j}}^j, \partial_0^{-1} p_\kappa^{\vec{\sigma}(\hat{t})} \right\rangle
\Bigg\}  \otimes [\hat{E}^{0\bar{i}}, \hat{E}^{0\bar{j}}], \label{e.b.6a}
\end{align}
and the second term $B_\kappa$ being
\begin{align}
8\left(i\bk\right)^2\sum_{(i,j) \in \Upsilon}\int_{I^2} \Bigg\{& \left\langle \bk\sum_{v=1}^{\un}\int_I\ ds\ \left\{ \left[m_\kappa(\tilde{\lambda}_\kappa^v(s) )\right]^{-1} \varrho_s^{v,\prime} p_\kappa^{\vec{\varrho}^{v}_s} \right\}^i, \partial_0^{-1} p_\kappa^{\vec{\sigma}(\hat{t})} \right\rangle \nonumber\\
\times& \left\langle \bk\sum_{\bar{v}=1}^{\un}\int_I\ d\bar{s}\ \left\{ \left[m_\kappa(\tilde{\lambda}_\kappa^{\bar{v}}(\bar{s}) )\right]^{-1} \varrho_{\bar{s}}^{\bar{v},\prime} p_\kappa^{\vec{\varrho}^{\bar{v}}_{\bar{s}}} \right\}^j,\partial_0^{-1}  p_\kappa^{\vec{\sigma}(\hat{t})} \right\rangle \Bigg\}J_{ij}(\hat{t})\ d\hat{t}\otimes (0,\mathcal{E}^-). \label{e.b.6b}
\end{align}

\end{proof}

\begin{rem}
Refer to Remark \ref{r.p.2} for a detailed explanation of Expressions \ref{e.b.6a} and \ref{e.b.6b}.
\end{rem}

Refer to Notation \ref{n.s.2}. Since $\kappa\lambda_\kappa^v \rightarrow 0$ and $\kappa\tilde{\lambda}_\kappa^v \rightarrow 0$ as $\kappa$ goes to infinity in Lemma \ref{l.k.1}, to compute the limit for the terms $A_\kappa$ and $B_\kappa$ as $\kappa$ goes to infinity, it suffices to compute the limit of
\begin{align*}
\bar{C}_\kappa^\pm := \frac{-\kappa^4}{32\pi^2}\sum_{(i,j,k)\in C_3}\sum_{v,\bar{v}=1}^{\un}\int_{I^4}\ \Bigg\{ &\left\langle  \partial_0^{-1}p_\kappa^{\vec{\sigma}(\hat{t})}, \kappa\left[\varrho^{v,\prime}_{k,s}\partial_j^{-1} - \varrho^{v,\prime}_{j,s}\partial_k^{-1}\right]p_\kappa^{\vec{\varrho}^{v}_s}\right\rangle \\
\times& \left\langle  \partial_0^{-1}p_\kappa^{\vec{\sigma}(\hat{t})}, \kappa\left[\varrho^{\bar{v},\prime}_{i,\bar{s}}\partial_k^{-1} - \varrho^{\bar{v},\prime}_{k,\bar{s}}\partial_i^{-1}\right]p_\kappa^{\vec{\varrho}^{\bar{v}}_{\bar{s}}}
\right\rangle
\Bigg\}\ J_{ij}(\hat{t})\ d\hat{s}d\hat{t} \otimes \mathcal{F}^\pm,
\end{align*}
respectively as $\kappa$ goes to infinity. Note that $\hat{s} = (s, \bar{s})$ and $\hat{t} = (t, \bar{t})$.

\begin{defn}\label{d.x.3}
Refer to Equation \ref{e.x.4}. We define Expression \ref{e.f.6a} by
\beq \lim_{\kappa \rightarrow \infty}\left[\bar{C}_\kappa^+ + \bar{C}_\kappa^-\right]\prod_{u=1}^{\on}\left[ \Tr_{\rho_u^+}\ \hat{\mathcal{W}}_\kappa^+(q; \ol^u, \uL) + \Tr_{\rho_u^-}\ \hat{\mathcal{W}}_\kappa^-(q; \ol^u, \uL) \right], \nonumber \eeq whereby
$\bar{C}_\kappa^\pm$ were defined in the preceding paragraph.
\end{defn}

Putting all together, we can now make the final definition.

\begin{defn}(Curvature Path Integral)\label{d.cpi}\\
Refer to Definitions \ref{d.x.1}, \ref{d.x.2} and \ref{d.x.3}. We define the curvature path integral given by Expression \ref{e.f.1} as \beq \lim_{\kappa \rightarrow \infty}\left[(\bar{A}_\kappa^+ + \bar{A}_\kappa^-) + \bar{B}_\kappa + (\bar{C}_\kappa^+ + \bar{C}_\kappa^-)\right]\prod_{u=1}^{\on}\left[ \Tr_{\rho_u^+}\ \hat{\mathcal{W}}_\kappa^+(q; \ol^u, \uL) + \Tr_{\rho_u^-}\ \hat{\mathcal{W}}_\kappa^-(q; \ol^u, \uL) \right]. \nonumber \eeq
\end{defn}

\appendix

\section{Important Lemmas}

\begin{lem}\label{l.l.5}
Refer to Notation \ref{n.n.1}.
\begin{enumerate}
  \item\label{i.l.6} Let $s, t \in \bR$, $s \neq t$. Then \beq \lim_{\kappa \rightarrow \infty}\frac{\kappa}{\sqrt{2\pi}}\langle q_\kappa^{s}, \partial_0^{-1}q_\kappa^t \rangle = \left\{
      \begin{array}{ll}
      1, & \hbox{$s > t$;} \\
      -1, & \hbox{$s < t$.}
      \end{array}
      \right. \nonumber \eeq
\item\label{i.l.5} Let $\ba, \mathbf{b} \in \bR^2$. Then, \beq \langle \pk{\bR^2}^{\ba}, \pk{\bR^2}^{\mathbf{b}} \rangle = e^{-\kappa^2|\ba - \mathbf{b}|^2/8}. \nonumber \eeq
\item\label{i.l.7} Let $a, b \in \bR^3$. Then, \beq \langle \pk{\bR^2}^{a}, \pk{\bR^2}^{b} \rangle = e^{-\kappa^2|a - b|^2/8}. \nonumber \eeq
\end{enumerate}
\end{lem}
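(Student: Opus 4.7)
The plan splits naturally along the three parts of the lemma, with items (2) and (3) being routine Gaussian convolutions and item (1) requiring a bit more care because of the $\sgn$-kernel hidden inside $\partial_0^{-1}$.

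For items (2) and (3), I would just compute directly. From Notation~\ref{n.n.1}, $p_\kappa^x$ in $\bR^n$ has the form $(\kappa^2/2\pi)^{n/4}\exp(-\kappa^2|\cdot-x|^2/4)$ (this is forced by requiring $(p_\kappa^x)^2$ to be the $n$-dimensional Gaussian of variance $1/\kappa^2$). Then $p_\kappa^{\ba}(y)\,p_\kappa^{\mathbf{b}}(y) = (\kappa^2/2\pi)^{n/2}\exp(-\kappa^2[|y-\ba|^2+|y-\mathbf{b}|^2]/4)$, and completing the square
$$|y-\ba|^2+|y-\mathbf{b}|^2 = 2\bigl|y-\tfrac{\ba+\mathbf{b}}{2}\bigr|^2 + \tfrac{1}{2}|\ba-\mathbf{b}|^2$$
peels off the factor $e^{-\kappa^2|\ba-\mathbf{b}|^2/8}$. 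The remaining Gaussian integral over $\bR^n$ equals $(2\pi/\kappa^2)^{n/2}$, which exactly cancels the prefactor $(\kappa^2/2\pi)^{n/2}$. Applied with $n=2$ this gives (\ref{i.l.5}) and with $n=3$ it gives (\ref{i.l.7}).

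For item (1) the plan is to write $\partial_0^{-1}$ via its sign kernel. Using Definition~\ref{d.lo.1},
$$\langle q_\kappa^s,\partial_0^{-1}q_\kappa^t\rangle = \tfrac{1}{2}\int_{\bR}\!\int_{\bR} q_\kappa^s(x)\,q_\kappa^t(\tau)\,\sgn(x-\tau)\,d\tau\,dx.$$
The product of the two Gaussians is $\frac{\kappa}{\sqrt{2\pi}}\exp(-\kappa^2[(x-s)^2+(\tau-t)^2]/4)$. I would then change variables $u=x-s,\;v=\tau-t$ and rotate to $a=u-v,\;b=u+v$; this diagonalizes the quadratic form into $(a^2+b^2)/2$, the Jacobian contributes $\tfrac{1}{2}$, and the $b$-integral evaluates cleanly to $2\sqrt{2\pi}/\kappa$. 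After this decoupling one is left with
$$\langle q_\kappa^s,\partial_0^{-1}q_\kappa^t\rangle \;=\; \tfrac{1}{2}\int_{\bR} e^{-\kappa^2 a^2/8}\,\sgn(a+(s-t))\,da.$$

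The hard part, such as it is, is the $\kappa\to\infty$ limit of this last integral after multiplying by $\kappa/\sqrt{2\pi}$. The idea is to split $\sgn(a+(s-t)) = \sgn(s-t) + [\sgn(a+(s-t))-\sgn(s-t)]$; the first piece gives $\sgn(s-t)\cdot\int e^{-\kappa^2a^2/8}\,da = \sgn(s-t)\cdot 2\sqrt{2\pi}/\kappa$, while the remainder is supported on $\{|a|\ge |s-t|\}$ where the Gaussian is exponentially small, hence $o(1/\kappa)$ once $s\neq t$ is fixed. Multiplying through by $\kappa/\sqrt{2\pi}$ yields the claimed limit $\sgn(s-t)\in\{+1,-1\}$. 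The only obstacle worth mentioning is tracking constants through the rotation and verifying the exponential decay of the remainder term, both of which are routine; once the Gaussian product is diagonalized, everything else is bookkeeping.
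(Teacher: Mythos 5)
Your proof is correct and follows essentially the same route as the paper: items (2) and (3) by completing the square in the Gaussian product, and item (1) by writing out $\partial_0^{-1}$ and letting the Gaussian concentrate so that the sign of $s-t$ emerges in the limit. The only cosmetic difference is that you rotate coordinates to integrate out one variable before taking the limit, whereas the paper keeps the double integral and passes to the limit directly; both are routine and yield the same bookkeeping.
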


\begin{proof}
The proof of Item (\ref{i.l.6}) can be found in Lemma 4.5 in \cite{CS-Lim01}. We reproduce it here for the convenience of the reader.

By definition of $\partial_0^{-1}$, we have
\begin{align*}
&\frac{\kappa}{\sqrt{2\pi}}\langle q_\kappa^{s}, \partial_0^{-1}q_\kappa^t \rangle \\
&= \frac{\kappa}{\sqrt{2\pi}}\int_\bR \frac{\sqrt \kappa}{(2\pi)^{1/4}}e^{-\frac{\kappa^2}{4}(x - s)^2} \cdot \frac{1}{2}\Bigg[ \int_{-\infty}^x \frac{\sqrt\kappa}{(2\pi)^{1/4}}e^{-\frac{\kappa^2}{4}(y-t)^2}\ dy
- \int_{x}^\infty \frac{\sqrt\kappa}{(2\pi)^{1/4}}e^{-\kappa^2(y-t)^2/4}\ dy \Bigg]\ dx \\
&= \int_\bR \frac{\kappa}{(4\pi)^{1/2}}e^{-\kappa^2 r^2/4} \cdot \Bigg[ \int_{-\infty}^{r + s-t} \frac{\kappa}{(4\pi)^{1/2}}e^{-\kappa^2\tau^2/4}\ d\tau
- \int_{r+s-t}^\infty \frac{\kappa}{(4\pi)^{1/2}}e^{-\kappa^2\tau^2/4}\ d\tau \Bigg]\ dr \\
&\longrightarrow
\left\{
  \begin{array}{ll}
    1, & \hbox{$s > t$;} \\
    -1, & \hbox{$s < t$,}
  \end{array}
\right.
\end{align*}
as $\kappa \rightarrow \infty$.

Let $\mathbf{x} = (x_+, x_-)$. Item (\ref{i.l.5}) follows from direct integration,
\begin{align*}
\int_{\bR^2}\frac{\kappa}{\sqrt{2\pi}} e^{-\kappa^2|\mathbf{x} - \ba|^2/4}\cdot
\frac{\kappa}{\sqrt{2\pi}} e^{-\kappa^2|\mathbf{x} - \mathbf{b}|^2/4}\ dx_+\ dx_-
=& e^{-\kappa^2|\ba - \mathbf{b}|^2/8}.
\end{align*}

Note that $\langle p_\kappa^a, p_\kappa^b \rangle $ means integrate the product over $\bR^3$ using Lebesgue measure. The proof of Item \ref{i.l.7} is similar to the above item, so omitted.
\end{proof}

\nocite{*}



\end{document}